\documentclass{article}
\usepackage{latexsym,amssymb,amsmath,amsthm}
\newtheorem{theo}{Theorem}[section]
\newtheorem{coro}[theo]{Corollary}
\newtheorem{lemm}[theo]{Lemma}
\newtheorem{prop}[theo]{Proposition}

\newtheorem{rema}[theo]{Remark}

\author{I.D. Chipchakov\footnote{Partially supported by Grant MI-1503/2005 of
the Bulgarian National Science Fund} \\Institute of Mathematics and
Informatics,\\Bulgarian Academy of Sciences,\\
Acad. G. Bonchev Str., bl. 8, 1113, Sofia, Bulgaria}
\title{On Henselian valuations and Brauer groups of primarily quasilocal
fields}
\date{}
\begin{document}
\maketitle

\bigskip
\centerline{\it Dedicated to Professor Serban Basarab,}
\smallskip
\centerline{\it on the occasion of his seventieth birthday}

\begin{abstract}\footnote{Keywords and phrases: primarily quasilocal
field, quasilocal field, Henselian valuation, immediate extension,
norm-inertial extension, totally indivisible value group, cyclic
algebra} This paper finds a classification, up-to an isomorphism, of
abelian torsion groups realizable as Brauer groups of major types of
Henselian valued primarily quasilocal fields with totally
indivisible value groups. When $E$ is a quasilocal field with such a
valuation, it shows that the Brauer group of $E$ is divisible and
embeddable in the quotient group of the additive group of rational
numbers by the subgroup of integers.
\end{abstract}

\section{Introduction and statement of the main result}

A field $K$ is said to be primarily quasilocal (abbr, PQL), if every
cyclic extension $F$ of $K$ is embeddable as a subalgebra in each
central division $K$-algebra $D$ of Schur index ind$(D)$ divisible
by the degree $[F\colon K]$; we say that $K$ is quasilocal, if its
finite extensions are PQL-fields. This paper is devoted to the study
of the Brauer group Br$(K)$ when $K$ is PQL and possesses a
Henselian valuation $v$. It determines the structure of the
$p$-component Br$(K) _{p}$ of Br$(K)$, for a given prime number $p$,
under the hypothesis that the value group $v(K)$ of $(K, v)$ is
$p$-indivisible, i.e. $v(K) \neq pv(K)$. This enables us to describe
the isomorphism classes of Brauer groups of Henselian PQL-fields
with totally indivisible value groups (i.e. $p$-indivisible, for
each prime $p$), and to do the same in the special case where the
considered valued fields are quasilocal. The method of proving our
main results makes it possible to establish the existence of new
types of Henselian real-valued quasilocal fields, which make
interest in the context of the recently posed problem of
characterizing central division algebras over finitely-generated
fields $F$ by their splitting fields of finite degree over $F$ (see
Proposition 6.5 and the comment on Remark 6.6).

\smallskip
The basic notation, terminology and conventions kept in this paper
are standard and virtually the same as in \cite{Ch5}, I, and
\cite{Ch6}. Throughout, Brauer and value groups are written
additively, Galois groups are viewed as profinite with respect to
the Krull topology, and by a profinite group homomorphism, we mean a
continuous one. As usual, $\mathbb Q /\mathbb Z$ denotes the
quotient group of the additive group of rational numbers by the
subgroup of integers. We write $\mathbb P$ for the set of prime
numbers, and for each $p \in \mathbb P$, $\mathbb F_{p}$ is a field
with $p$ elements, $\mathbb Z_{p}$ is the additive group of $p$-adic
integers and $\mathbb Z(p ^{\infty })$ is the quasicyclic $p$-group.
For any profinite group $G$, we denote by cd$(G)$ the cohomological
dimension of $G$, and by cd$_{p}(G)$ its cohomological
$p$-dimension, for each $p \in \mathbb P$. Given a field $E$, $E
_{\rm sep}$ denotes a separable closure of $E$, $\mathcal{G}_{E} =
\mathcal{G}(E _{\rm sep}/E)$ is the absolute Galois group of $E$,
$\Pi (E) = \{p \in \mathbb P\colon \ {\rm cd}_{p}(\mathcal{G}_{E})
\neq 0\}$ and $P(E)$ is the set of those $p \in \mathbb P$, for
which $E$ is properly included in its maximal $p$-extension $E(p)$
in $E _{\rm sep}$. In what follows, for any $p \in P(E)$, $r(p)_{E}$
denotes the rank of $\mathcal{G}(E(p)/E)$, i.e. the cardinality of
any minimal system of generators of $\mathcal{G}(E(p)/E)$ as a
profinite group; we put $r(p)_{E} = 0$ in case $p \notin P(E)$. We
write $s(E)$ for the class of finite-dimensional central simple
$E$-algebras, $d(E)$ stands for the class of division algebras $D
\in s(E)$, and for each $A \in s(E)$, $[A]$ is the similarity class
of $A$ in Br$(E)$. For any field extension $E ^{\prime }/E$, we
denote by $I(E ^{\prime }/E)$ the set of its intermediate fields,
and by $\rho _{E'/E}$ the scalar extension map of Br$(E)$ into Br$(E
^{\prime })$. When $E ^{\prime }/E$ is finite and separable,
Cor$_{E'/E}$ denotes the corestriction homomorphism of Br$(E
^{\prime })$ into Br$(E)$. For convenience of the reader, we recall
that $E$ is said to be stable, if each $D \in d(E)$ has exponent
exp$(D)$ equal to ind$(D)$; we say that $E$ is absolutely stable, if
its finite extensions are stable fields. The field $E$ is called
$p$-quasilocal, for some $p \in \mathbb P$, if one of the following
conditions holds: (i) Br$(E) _{p} \neq \{0\}$ or $p \notin P(E)$;
(ii) every extension of $E$ in $E(p)$ of degree $p$ is embeddable as
an $E$-subalgebra in each $\Delta _{p} \in d(E)$ of index $p$. By
\cite{Ch5}, I, Theorem~4.1, $E$ is PQL if and only if it is
$p$-quasilocal, for each $p \in P(E)$. In this paper, we use at
crucial points the following characterization of the $p$-quasilocal
property (which is obtained as a consequence of \cite{Ch5}, I,
Theorems~3.1 (i)-(ii) and 4.1, and the general
restriction-corestriction (abbr, RC) formula for Brauer groups, see,
e.g., \cite{Ti}, Theorem~2.5):
\par
\medskip
(1.1) A field $E$ is $p$-quasilocal, for some $p \in \mathbb P$, if and
only if Cor$_{M/E}$ maps Br$(M) _{p}$ injectively into Br$(E) _{p}$,
for each finite extension $M$ of $E$ in $E(p)$. When this is
the case and Br$(E) _{p}$ is divisible, Cor$_{M/E}$ maps Br$(M)
_{p}$ bijectively upon Br$(E) _{p}$, for every $M$ of the considered
type.
\par
\medskip
The present research is naturally incorporated in the study of
Brauer groups of the basic types of stable fields. This problem has
two major aspects. In the first place, the structure of Br$(F)$ of
stable fields $F$ makes interest in the context of index-exponent
relations in central simple algebras over arbitrary fields (cf.
\cite{P}, Sects. 14.4 and 19.6). In the absolutely stable case, the
discussed problem is also related to the study of cohomological
properties of $\mathcal{G}_{E}$ (see \cite{P}, Sect. 14.6, and
\cite{Ch5}, I, Theorem~8.1). Secondly, the description of Br$(L)$,
for a given stable field $L$, usually reflects adequately an
essential part of the specific nature of $L$. Note in this
connection that important classes of stable fields $L$ have been
singled out by analyzing special properties of $L$. In particular,
this applies to the absolute stability of global fields (cf.
\cite{Re}, (32.19), function fields of algebraic curves defined over
a PAC-field \cite{E2}, function fields of algebraic surfaces over an
algebraically closed field of zero characteristic \cite{Jo} (see
also \cite{Li1}), and quasilocal fields \cite{Ch5}, I,
Proposition~2.3. In these cases, cd$(\mathcal{G}_{L(\sqrt{-1})}) \le
2$ (cf. \cite{S1}, Ch. II, 3.3 and 4.1, and \cite{Ch2}, I, Sect. 4),
which ensures that Br$(L)$ is a divisible group unless $L$ is
formally real (see \cite{S1}, Ch. II, 2.3, and \cite{Dr1}, page
110). The study of the stability property in the class of Henselian
fields $(K, v)$ sheds new light on the considered problem. As it
turns out, the residue field $\widehat K$ of $(K, v)$ is PQL
whenever $K$ is stable and the value group $v(K)$ is totally
indivisible \cite{Ch5}, I, Proposition~2.1. Moreover, these
conditions frequently ensure that $\widehat K$ is almost perfect,
i.e. its finite extensions have primitive elements (cf. \cite{Ch1},
Theorem~2.1, and \cite{Ch5}, I, (1.8) and Proposition~2.3). The
relations between $v(K)$ and $\widehat K$ make it possible to
characterize basic classes of Henselian stable fields (see
Proposition 2.2 and \cite{Ch1}, Theorem~3.1 and Sect. 4). They also
show explicitly how Brauer and character groups of PQL-fields
determine the structure of Brauer groups of stable fields (see
Remark 2.4 and \cite{Ch5}, II).
\par
\medskip
Brauer groups of PQL-fields also have strong influence on the study
of the norm groups of their finite abelian extensions. Specifically,
this applies to the relations described by the second part of the
following assertion (cf. \cite{Ch6}, Theorem~3.1, and \cite{Ch5}, I,
Lemma~4.2 (ii)):
\par
\medskip
(1.2) Let $E$ be a $p$-quasilocal field, $\Omega _{p}(E)$ the set of
finite abelian extensions of $E$ in $E(p)$, Nr$(E)$ the set of norm
groups of $E$, and $_{p} {\rm Br}(E) = \{b \in {\rm Br}(E)\colon \
pb = 0\}$. Then:
\par
(i) The natural mapping of $\Omega _{p}(E)$ into Nr$(E)$ (by the
rule $M \to N(M/E)$, $M \in \Omega _{p}(E)$) is injective, and for
each $M _{1}$, $M _{2} \in \Omega _{p}(E)$, the norm group (over
$E$) of the compositum $M _{1}M _{2}$ equals the intersection $N(M
_{1}/E) \cap N(M _{2}/E)$, and $N(M _{1} \cap M _{2}/E) = N(M
_{1}/E)N(M _{2}/E)$.
\par
(ii) For each $M \in \Omega _{p}(E)$, the quotient group $E ^{\ast
}/N(M/E)$ decomposes into a direct sum $\mathcal{G}(M/E) ^{d(p)}$ of
isomorphic copies of the Galois group $\mathcal{G}(M/E)$, indexed by
a set of cardinality $d(p)$, the dimension of $_{p}{\rm Br}(E)$ as a
vector space over $\mathbb F_{p}$. In particular, if Br$(E) _{p} =
\{0\}$, then $N(M/E) = E ^{\ast }$.
\par
\medskip\noindent
When $E$ is a PQL-field and $L/E$ is a finite abelian extension, it
follows from (1.2) (ii) and \cite{Ch6}, Lemma~2.1, that $E ^{\ast
}/N(L/E)$ is isomorphic to the direct product of the groups $E
^{\ast }/N(L _{p}/E)\colon \ p \in P(E), p \mid [L\colon E]$, where
$L _{p} = L \cap E(p)$, for each admissible $p$. This is an analogue
to the local reciprocity law whose form is determined by the
sequence $d(p)\colon \ p \in P(E)$, defined in (1.2) (ii). It is
therefore worth noting that an abelian torsion group is isomorphic
to Br$(\Phi )$, for some PQL-field $\Phi = \Phi (T)$ if and only if
one of the following conditions holds (see \cite{Ch7}, Sect. 1 and
Proposition~6.4):
\par
\medskip
(1.3) (i) $T$ is divisible; then $\Phi $ is nonreal and can be
chosen among those quasilocal fields, for which the maps $\rho
_{\Phi /\Phi '}$, $\Phi ^{\prime } \in I(\Phi _{\rm sep}/\Phi )$,
are surjective;
\par
(ii) The $2$-component $T _{2}$ is of order $2$ and the
$p$-components $T _{p}$, $p \in (\mathbb P \setminus \{2\})$, are
divisible; in this case, $\Phi $ is formally real.
\par
\medskip
Since the notion of a quasilocal field extends the one of a local
field and defines a class containing such frequently used
representatives as $p$-adically closed fields and Henselian discrete
valued fields with quasifinite residue fields (cf. \cite{S2}, Ch.
XIII, Sect. 3, and \cite{PR}, Theorem~3.1 and Lemma~2.9), these
facts attract interest in the role of Henselian valuations for
arbitrary quasilocal fields. The main results of this paper, stated
below, enable one to evaluate this role by comparing (1.3) with the
structure of Br$(K)$ when $(K, v)$ is a Henselian quasilocal field,
such that $v(K)$ is totally indivisible (see also Corollaries 5.3
and 5.4):

\medskip
\begin{theo}
Let $(K, v)$ be a Henselian $p$-quasilocal field with $v(K) \neq
pv(K)$, for some $p \in \mathbb P$. Then:
\par
{\rm (i)} {\rm Br}$(K) _{p}$ is trivial or isomorphic to $\mathbb
Z(p ^{\infty })$ except, possibly, in the case where $r(p)_{K} = 1$,
{\rm char}$(K) \neq p$ and $K$ does not contain a primitive $p$-th
root of unity;
\par
{\rm (ii)} $K$ is subject to the following alternative relative to
$v$:
\par
{\rm ($\alpha $)} There exists a $\mathbb Z_{p}$-extension $I _{\infty }$
of $K$ in $K(p)$, such $v _{\infty }(I _{\infty }) = v(K)$ and the
residue field of $(I _{\infty }, v _{\infty })$ is separable over
$\widehat K$, where $v _{\infty }$ is the unique, up-to an
equivalence, valuation of $I _{\infty }$ extending $v$;
\par
{\rm ($\beta $)} Finite extensions of $K$ in $K(p)$ are totally
ramified;
\par
{\rm (iii)} When $p \in P(K)$, {\rm Br}$(K) _{p} = \{0\}$ if and
only if finite extensions of $K$ in $K(p)$ are totally ramified and
the group $v(K)/pv(K)$ is of order $p$.
\end{theo}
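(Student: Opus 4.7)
The plan is to combine the characterization (1.1) of $p$-quasilocality via corestriction with the standard Henselian decomposition of $p$-extensions into an inertial part (equal value group, separable residue extension) and a totally ramified part.

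For (ii), let $K _{0}$ be the compositum, inside $K(p)$, of all finite extensions $M/K$ with $v(M) = v(K)$ and $\widehat M/\widehat K$ separable. Henselian valuation theory identifies $\mathcal{G}(K _{0}/K)$ with a quotient of $\mathcal{G}(\widehat K(p)/\widehat K)$, so the stated alternative reduces to: either this residue pro-$p$-group is trivial (giving case $(\beta )$), or it contains $\mathbb Z _{p}$ as a quotient (giving case $(\alpha )$). To obtain the latter I would invoke the Proposition~2.2-type inheritance already noted in the introduction to transfer $p$-quasilocality to $\widehat K$ under the hypothesis $v(K) \neq pv(K)$, then use that a $p$-quasilocal field admitting any nontrivial $p$-extension possesses a $\mathbb Z _{p}$-extension; lifting through Hensel's lemma produces the desired $I _{\infty}$.

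For (i) and (iii) I would split along the alternative from (ii). In case $(\alpha )$, fix $\pi \in K ^{\ast}$ with $v(\pi) \notin pv(K)$; for each layer $M _{n} \subset I _{\infty}$ of degree $p ^{n}$ over $K$ and a generator $\chi _{n}$ of the character group of $\mathcal{G}(M _{n}/K)$, the cyclic algebra $(\chi _{n}, \pi) _{K}$ has exponent exactly $p ^{n}$, because an inertial scalar extension cannot split a Brauer class whose underlying element has non-$p$-divisible value. This produces classes of every $p$-power order in ${\rm Br}(K) _{p}$; together with the bijective corestriction Cor$_{M _{n}/K}$ furnished by the second sentence of (1.1) once divisibility is established, and an upper bound on the $p$-rank of ${\rm Br}(K) _{p}$ coming from the pro-cyclic structure of $\mathcal{G}(I _{\infty}/K)$, this forces ${\rm Br}(K) _{p} \cong \mathbb Z(p ^{\infty})$ outside the excluded Kummer-obstructed case. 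In case $(\beta )$, totally ramified $p$-extensions correspond to classes in $v(K)/pv(K)$ (via Kummer or Artin--Schreier theory as appropriate), and a standard symbol calculation gives: when $\vert v(K)/pv(K)\vert = p$ every symbol splits and ${\rm Br}(K) _{p} = \{0\}$, while $\vert v(K)/pv(K)\vert > p$ yields independent uniformizers whose symbol algebra is nonsplit, giving (iii).

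The main obstacle I anticipate is the exceptional sub-case of (i) when ${\rm char}(K) \neq p$ and $K$ lacks a primitive $p$-th root of unity: Kummer theory is unavailable, the symbol constructions above degenerate, and the $p$-rank argument must be replaced by a descent from $K(\zeta _{p})/K$ combined with the constraint $r(p) _{K} = 1$ on the rank of the pro-$p$ Galois group. Checking that the inertial-versus-ramified dichotomy of (ii) remains exhaustive under this interaction of rank and root-of-unity constraints is the most delicate point, and the one I would devote the most care to.
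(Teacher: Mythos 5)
Your outline misses the central difficulty of the theorem: when ${\rm char}(\widehat K) = p$, a degree-$p$ extension $M$ of $K$ in $K(p)$ with $v(M) = v(K)$ and $\widehat M/\widehat K$ separable need not be inertial --- it can be \emph{immediate} (nontrivial defect, $\widehat M = \widehat K$), and this case genuinely occurs (the paper's Proposition 6.2 constructs such fields). Your proposed identification of $\mathcal{G}(K_{0}/K)$ with a quotient of $\mathcal{G}(\widehat K(p)/\widehat K)$ collapses precisely here, since immediate extensions are invisible to the residue field; so your dichotomy ``residue group trivial versus residue group surjecting onto $\mathbb Z_{p}$'' is not exhaustive, and the alternative in (ii) does not follow. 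The same gap propagates into (i): for the cyclic algebra $(M_{n}/K, \sigma, \pi)$ to have exponent exactly $p^{n}$ you need $\pi \notin N(M_{n}/K)$, and when $M_{n}/K$ is immediate your appeal to ``an inertial scalar extension cannot split a class of non-$p$-divisible value'' is unavailable. Establishing that an immediate degree-$p$ extension $I/K$ satisfies $\nabla_{0}(K) \subseteq N(I/K)$ (so that $N(I/K)$ is exactly the set of elements of $p$-divisible value, hence $\pi \notin N(I/K)$) is the content of the paper's Theorem 3.1, and it consumes all of Section 3: a reduction through isolated subgroups of $v(K)$ (Lemma 3.7), explicit Artin--Schreier auxiliary extensions, and quantitative norm estimates of the type $\nabla_{\gamma'}(L) \subseteq N(IL/L)$ with $\gamma' = (2p-2)\gamma$ (Lemma 3.9). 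Nothing in your sketch substitutes for this.

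A secondary but still real gap is case $(\beta)$ of your argument for (iii). When $p = {\rm char}(\widehat K)$ there is no ``standard symbol calculation'': the paper must prove (Lemmas 4.2, 4.3, Remark 4.4, and statement (4.5), following Tomchin--Yanchevskij) that if all finite extensions of $K$ in $K(p)$ are totally ramified and $v(K)/pv(K)$ has order $p$, then $d(K)$ contains no cyclic algebra of index $p$ --- an argument requiring the corestriction identity of Lemma 4.2 and a careful analysis of where a $p$-th root of the defining scalar $c$ can live. Also note that your anticipated ``most delicate point'' (the exceptional case $r(p)_{K}=1$, ${\rm char}(K)\neq p$, no primitive $p$-th root of unity) is in fact the easy part --- the paper disposes of it in Remark 3.4 via inertial lifting --- whereas the defect case you have not addressed is where the real work lies. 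Before this proposal can become a proof you need, at minimum, a treatment of immediate $p$-extensions of Henselian $p$-quasilocal fields and of their norm groups.
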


\medskip
When $p \neq {\rm char}(\widehat K)$ and Br$(K) _{p} \neq \{0\}$,
the isomorphism Br$(K) _{p} \cong \mathbb Z(p ^{\infty })$ is established
in Section 4 by proving the following assertion:
\par
\medskip
(1.4) If $p \neq {\rm char}(\widehat K)$ and $K$ contains a
primitive $p$-th root of unity, then $\mathcal{G}(K(p)/K)$ is a
Demushkin group (in the sense of \cite{S1}) with $r(p)_{K} = 2$ or
is isomorphic to $\mathbb Z_{p}$ depending on whether or not Br$(K) _{p}
\neq \{0\}$.
\par
\medskip\noindent
The proof of Theorem 1.1 (i) in the case where char$(\widehat K) =
p$ and there exists an immediate cyclic extension $I/K$ of degree
$p$ is presented in Section 3 (the realizability of this special
case is demonstrated by Proposition 6.2). This part of the proof is
based on the divisibility of Br$(K) _{p}$ (see Lemma 3.3 (i)) as
well as on (1.2) (i) and general properties of Henselian valuations
and isolated subgroups of their value groups. The rest of the proof
of Theorem 1.1 is contained in Section 4. When $p = {\rm
char}(\widehat K)$, we adapt to our setting the proof of \cite{TY},
Theorem~3.1. Our argument also relies on (1.2) and on the method of
proving the main results of \cite{Ch2}, I. The remaining part of the
paper presents consequences of the main result. In Section 5, we
describe the isomorphism classes of Brauer groups of Henselian
PQL-fields $(K, v)$ such that $v(K)$ is totally indivisible (see
Corollary 5.5, (5.2) and (5.3)). When $K$ is quasilocal, we also
prove the cyclicity of every $D \in d(K)$ (see Corollary 5.3). In
Section 6, we complete the characterization of the quasilocal
property in the class of Henselian fields with totally indivisible
value groups, started in \cite{Ch2}, I; also, we give a criterion
for divisibility of Brauer groups of quasilocal fields, and for
defectlessness of their finite separable extensions.

\section{Preliminaries on Henselian valuations and completions}

Let $K$ be a field with a nontrivial (Krull) valuation $v$, $O
_{v}(K) = \{a \in K\colon \ v(a) \ge 0\}$ the valuation ring of $(K,
v)$, $M _{v}(K) = \{\mu \in K\colon \ v(\mu ) > 0\}$ the unique
maximal ideal of $O _{v}$, $v(K)$ and $\widehat K$ the value group
and the residue field of $(K, v)$, respectively, Is$_{v} ^{\prime }
(K)$ the set of isolated subgroups of $v(K)$ and Is$_{v}(K) = {\rm
Is}_{v} ^{\prime }(K) \setminus \{v(K)\}$. It is well-known that,
for each $H \in {\rm Is}_{v}(K)$, the ordering of $v(K)$ induces
canonically on $v(K)/H$ a structure of an ordered group, and one can
naturally associate with $v$ and $H$ a valuation $v _{H}$ of $K$
with $v _{h} (K) = v(K)/H$. Unless specified otherwise, $K _{H}$
will denote the residue field of $(K, v _{H})$, $\eta _{H}$ the
natural projection $O _{v _{H}}(K) \to K _{H}$, and $\hat v _{H}$
the valuation of $K _{H}$ induced canonically by $v$ and $H$. The
valuations $v$, $v _{H}$ and $\hat v _{H}$ are related as follows
(see \cite{E3}, Proposition~5.2.1):
\par
\medskip
(2.1) (i) $\hat v _{H}(K _{H}) = H$, $\widehat K _{H}$ is isomorphic
to $\widehat K$ and $\eta _{H}$ induces a surjective homomorphism of
$O _{v}(K)$ upon $O _{\hat v _{H}}(K _{H})$; when $H$ is divisible,
$v(K)$ is isomorphic to the lexicographically ordered direct sum $v
_{H}(K) \oplus \hat v _{H}(\widehat K _{H})$;
\par
(ii) If $v(K)$ properly includes the union $H(K)$ of the groups from
Is$_{v}(K)$, then $v _{H(K)}$ is real-valued.
\par
\medskip
Recall further that the topology of $K$ induced by $v _{H}$ does not
depend on the choice of $H$ and the mapping of Is$_{v}(K)$ on the
set $V _{v}$ of subrings of $K$ including $O _{v}$, defined by the
rule $X \to O _{v _{X}}(K)$, $X \in {\rm Is}_{v}(K)$, is an
inclusion-preserving bijection. By H\"older's theorem (cf.
\cite{E3}, Theorem~2.5.2), Is$_{v}(K) = \{0\}$ if and only if $v(K)$
is Archimedean, i.e. it embeds as an ordered subgroup in the
additive group $\mathbb R$ of real numbers. When this is the case, we
identify $v(K)$ with its isomorphic copy in $\mathbb R$.
\par
We say that $(K, v)$ is Henselian, if the valuation $v$ is
Henselian, i.e. $v$ is uniquely, up-to an equivalence, extendable to
a valuation $v _{L}$ on each algebraic field extension $L/K$. In
order that $v$ is Henselian, it is necessary and sufficient that the
Hensel-Rychlik condition holds (cf. \cite{E3}, Sect. 18.1):
\par
\medskip
(2.2) Given a polynomial $f(X) \in O _{v}(K) [X]$, and an element $a
\in O _{v}(K)$, such that $2v(f ^{\prime }(a)) < v(f(a))$, where $f
^{\prime }$ is the formal derivative of $f$, there is a zero $c \in
O _{v}(K)$ of $f$ satisfying the equality $v(c - a) = v(f(a)/f
^{\prime }(a))$.
\par
\medskip
When $v(K)$ is not Archimedean, the Henselian property can be also
characterized as follows (see, e.g., \cite{Ch8}, Sect. 2):
\par
\medskip
\begin{prop} Let $(K, v)$ be a valued field, and let $H \in {\rm
Is}_{v}(K)$. Then $v$ is Henselian if and only if $v _{H}$ and $\hat
v _{H}$ are Henselian.
\end{prop}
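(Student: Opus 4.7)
The plan is to view $v$ as the composite of its coarsening $v_H$ on $K$ with the induced valuation $\hat v_H$ on the residue field $K_H$ of $v_H$, and to deduce the proposition from the standard principle that such a composite is Henselian precisely when each factor is. I would use either (2.2) or its well-known equivalents (the factorization form of Hensel's lemma, or uniqueness of valuation extensions to algebraic overfields) as the operational criterion of Henselianness, together with the elementary facts, immediate from (2.1)(i), that $O_v(K)=\eta_H^{-1}(O_{\hat v_H}(K_H))$ and $M_{v_H}(K)\subseteq O_v(K)$, and that $\hat v_H(\eta_H(x))=v(x)$ whenever $x\in O_v(K)$ satisfies $v(x)\in H$.

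Forward implication: assume $v$ is Henselian. That $v_H$ is Henselian I would derive from the uniqueness-of-extension characterization: given two extensions $w_1,w_2$ of $v_H$ to an algebraic $L/K$, each $w_i$ is a coarsening of some extension $V_i$ of $v$ to $L$ (obtained by going-up for the prime $M_v\cap O_{v_H}$ along the integral extension into $O_{w_i}$), and $V_1=V_2$ by Henselianness of $v$, whence $w_1=w_2$. That $\hat v_H$ is Henselian I would verify by lifting: given $\bar f\in O_{\hat v_H}(K_H)[X]$ and $\bar a\in O_{\hat v_H}(K_H)$ satisfying the hypothesis of (2.2) for $\hat v_H$, choose $\eta_H$-preimages $f\in O_v(K)[X]$ and $a\in O_v(K)$ using (2.1)(i); the relevant values lie in $H$, so the hypothesis transports to $2v(f'(a))<v(f(a))$ in $v(K)$, (2.2) applied to $v$ yields a zero $c\in O_v(K)$ of $f$, and $\eta_H(c)\in O_{\hat v_H}(K_H)$ is the desired zero of $\bar f$ at the prescribed $\hat v_H$-distance from $\bar a$.

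Converse implication, which is the main work: assume $v_H$ and $\hat v_H$ are both Henselian. I would prefer to work in the factorization form of Hensel's lemma. Given a monic $f\in O_v(K)[X]$ with a coprime factorization $\bar f=g_0h_0$ in $\widehat K[X]$ (the common residue field of $v$ and of $\hat v_H$), I would first apply Henselianness of $\hat v_H$ to the image $\eta_H(f)\in O_{\hat v_H}(K_H)[X]$, whose further reduction to $\widehat K[X]$ is $g_0h_0$, to obtain a coprime factorization $\eta_H(f)=\tilde g_H\tilde h_H$ in $O_{\hat v_H}(K_H)[X]\subseteq K_H[X]$. Viewing $f\in O_{v_H}(K)[X]$, Henselianness of $v_H$ then lifts this coprime factorization of $\eta_H(f)$ in $K_H[X]$ to a factorization $f=gh$ in $O_{v_H}(K)[X]$ with $\eta_H(g)=\tilde g_H$ and $\eta_H(h)=\tilde h_H$. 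The chief obstacle—ensuring that $g,h$ lie in the smaller ring $O_v(K)[X]$ rather than merely in $O_{v_H}(K)[X]$—is resolved by the identification $O_v(K)=\eta_H^{-1}(O_{\hat v_H}(K_H))$ from (2.1)(i): since the coefficients of $g$ reduce under $\eta_H$ into $O_{\hat v_H}(K_H)$, they automatically lie in $O_v(K)$, and similarly for $h$, so the resulting factorization realises the factorization criterion for Henselianness of $v$.
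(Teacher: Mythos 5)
The paper does not actually prove Proposition 2.1; it is stated as a known characterization with a pointer to \cite{Ch8}, Sect.~2, so there is no in-text argument to compare against. Your proposal is a correct, self-contained rendition of the standard ``composition of valuations'' theorem, and the decomposition you use ($v$ as the composite of the coarsening $v_H$ with $\hat v_H$ on $K_H$) is exactly the one the proposition is about, so the route is the expected one. Two points are glossed but are easily repaired: (a) in the forward direction, the step ``$V_1=V_2$ whence $w_1=w_2$'' needs the observation that, for an algebraic extension $L/K$, the isolated subgroup of $V_i(L)$ defining the coarsening $w_i$ is the convex hull of $H$ in $V_i(L)$ and hence is determined by $H$, so equal $V_i$'s force equal coarsenings; (b) in the converse, before invoking Henselianness of $v_H$ you must check that the lifted factors $\tilde g_H,\tilde h_H$ are coprime in $K_H[X]$, which follows from the coprimality of $g_0,h_0$ over $\widehat K$ together with monicity of $\tilde g_H$ (Gauss-type argument over the valuation ring $O_{\hat v_H}(K_H)$). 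You also implicitly rely on the equivalence of the Hensel--Rychlik condition (2.2) with the factorization form of Hensel's lemma and with uniqueness of extensions; you flag this, and it is standard, but in a fully written proof one should fix a single criterion or cite the equivalences explicitly. With those small additions the argument is complete.
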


\medskip
When $v$ is Henselian and $L/K$ is an algebraic extension, $v _{L}$
is also Henselian and extends uniquely to a valuation $v _{D}$ on
each $D \in d(L)$. Denote by $\widehat D$ the residue field of $(D,
v _{D})$, put $v(D) = v _{D}(D)$, and let $e(D/K)$ be the
ramification index of $D/K$, i.e. the index of $v(K)$ in $v(D)$. By
the Ostrowski-Draxl theorem \cite{Dr2}, $[D\colon K]$, $[\widehat
D\colon \widehat K]$ and $e(D/K)$ are related as follows:
\par
\medskip
(2.3) $[D\colon K]$ is divisible by $[\widehat D\colon \widehat
K]e(D/K)$ and $[D\colon K]/([\widehat D\colon \widehat K]e(D/K))$ is
not divisible by any $p \in \mathbb P$, $p \neq {\rm char}(\widehat K)$.
\par
\medskip
The $K$-algebra $D$ is said to be defectless, if $[D\colon K] =
[\widehat D\colon \widehat K]e(D/K)$, and it is called totally
ramified, if $e(D/K) = [D\colon K]$. The following lemma, proved in
\cite{Ch8}, Sect. 6, characterizes the case in which $v(K) \neq
pv(K)$ and $I(K(p)/K)$ does not contain totally ramified extensions
of $K$, for a given $p \in \mathbb P$.

\medskip
\begin{lemm} Let $(K, v)$ be a Henselian field with
$v(K) \neq pv(K)$, for some $p \in \mathbb P$. Then $K$ is subject
to the following alternative relative to $v$:
\par
{\rm (i)} There exists a field $\Phi \in I(K(p)/K)$, such that
$[\Phi \colon K] = p$ and $\Phi /K$ is totally ramified;
\par
{\rm (ii)} {\rm char}$(K) = 0$, $K$ does not contain a primitive
$p$-th root of unity and the minimal group from {\rm Is}$_{v}
^{\prime }(K)$ containing $v(p)$ is $p$-divisible.
\end{lemm}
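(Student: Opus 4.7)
The plan is to prove the contrapositive: if (i) fails, then (ii) holds.

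A sequence of quick reductions dispenses with the tame cases. If $\mathrm{char}(K) = p$, pick $\pi \in K$ with $v(\pi) < 0$ and $v(\pi) \notin pv(K)$; the Artin--Schreier polynomial $X^p - X - \pi$ has a root $\alpha$ with $v(\alpha) = v(\pi)/p \notin v(K)$, yielding a cyclic, totally ramified degree-$p$ extension inside $K(p)$ and contradicting the failure of (i). So $\mathrm{char}(K) = 0$. If $\zeta_p \in K$, then for any $\pi \in K$ with $v(\pi) \notin pv(K)$ one has $\pi \notin K^{\ast p}$, and the Kummer extension $K(\pi^{1/p})/K$ is cyclic, totally ramified, of degree $p$, and contained in $K(p)$, again contradicting the failure of (i). Hence $\zeta_p \notin K$; in particular $p$ is odd, as $\zeta_2 = -1 \in K$. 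If moreover $\mathrm{char}(\widehat K) \neq p$, then $v(p) = 0$, so the minimal isolated subgroup of $v(K)$ containing $v(p)$ equals $\{0\}$ and is trivially $p$-divisible; (ii) holds.

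The remaining case is $\mathrm{char}(K) = 0$, $\mathrm{char}(\widehat K) = p$, and $\zeta_p \notin K$. I argue by contradiction that the minimal isolated subgroup $H$ of $v(K)$ containing $v(p)$ is $p$-divisible. Assume $H \neq pH$; the goal is to exhibit a cyclic, totally ramified degree-$p$ extension of $K$ in $K(p)$, using the cyclotomic tower. Let $L = K(\zeta_p)$, a tame defectless extension of degree $d \mid p - 1$, with unique Henselian extension $w$ of $v$. Let $n \ge 1$ be minimal with $\zeta_{p^{n+1}} \notin L$; I address the existence of $n$ below. Since $p$ is odd, $\mathrm{Gal}(K(\zeta_{p^{n+1}})/K)$ is cyclic of order $pd$, so the extension $K(\zeta_{p^{n+1}})/K$ has a unique cyclic degree-$p$ subfield $F$, which lies in $K(p)$ as a cyclic $p$-extension. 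The standard valuation formula $w(\zeta_{p^k} - 1) = w(p)/(p^{k-1}(p-1))$ gives $e(L(\zeta_{p^{n+1}})/L) = p$, and combined with $e(L/K) = d$, yields $e(K(\zeta_{p^{n+1}})/K) = pd = [K(\zeta_{p^{n+1}}):K]$. Hence $K(\zeta_{p^{n+1}})/K$ is totally ramified and defectless, forcing $F/K$ to be totally ramified --- contradicting the failure of (i).

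The main obstacle is proving the existence of $n$, i.e., ruling out $\mu_{p^{\infty}} \subseteq L$. If $L$ contained $\mu_{p^{\infty}}$, the valuation formula would place the $p$-divisible subgroup $\mathbb Z[1/p] \cdot w(p)/(p-1)$ inside $w(L)$, and a careful analysis of the archimedean ``$v(p)$-layer'' $H/H_0$ (where $H_0$ is the maximal isolated subgroup of $v(K)$ properly contained in $H$), combined with the coprime-index inclusion $v(K) \subseteq w(L)$, would then force $H = pH$, contradicting our assumption. Executing this absorption analysis rigorously --- and verifying the defectlessness claims along the way --- is the principal technical content of the proof.
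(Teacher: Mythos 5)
The paper does not prove this lemma internally; it is quoted from \cite{Ch8}, Sect.~6, so there is no in-text argument to compare against. Your reductions for the tame cases are fine: the Artin--Schreier computation for ${\rm char}(K)=p$, the Kummer extension when $\zeta _{p}\in K$, and the observation that $v(p)=0$ when ${\rm char}(\widehat K)\neq p$ are all correct. The problem is the main case ${\rm char}(K)=0$, ${\rm char}(\widehat K)=p$, $\zeta _{p}\notin K$, where your argument has a genuine gap.

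The step ``the valuation formula gives $e(L(\zeta _{p^{n+1}})/L)=p$'' is unjustified and false in general: the formula only tells you that $w(\zeta _{p^{n+1}}-1)=w(p)/(p^{n}(p-1))$, and if this element already lies in $w(L)$ no ramification is forced -- the layer can be inertial or even immediate. Concretely, let $p$ be odd, $U/\mathbb Q _{p}$ unramified of degree $p$, $T=\mathbb Q _{p}(\zeta _{p^{2}})U$ with ${\rm Gal}(T/\mathbb Q _{p})=\langle \sigma _{0}\rangle \times \langle \tau \rangle$ ($\sigma _{0}$ cyclotomic of order $p(p-1)$, $\tau $ the Frobenius of $U$), and put $L'=T^{\langle \sigma _{0}^{p-1}\tau \rangle }$. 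Then $L'/\mathbb Q _{p}$ is cyclic, totally ramified of degree $p(p-1)$, $\zeta _{p}\in L'$, $\zeta _{p^{2}}\notin L'$, and $L'(\zeta _{p^{2}})=T$ is \emph{unramified} over $L'$. Taking $K$ to be the degree-$p$ subfield of $L'/\mathbb Q _{p}$, one gets $\zeta _{p}\notin K$, $K(\zeta _{p})=L'$, $v(K)=\mathbb Z\neq pv(K)$, yet $e(K(\zeta _{p^{2}})/K)=p-1$, so the unique degree-$p$ subfield $F$ of $K(\zeta _{p^{2}})/K$ is unramified, not totally ramified. (The lemma still holds for this $K$, but via a totally ramified cyclic extension that does not come from the cyclotomic tower.) Your auxiliary claim $e(L/K)=d$ fails similarly (e.g. $K=\mathbb Q _{5}(5^{1/4})$, where $K(\zeta _{5})=K(\zeta _{8})$ is unramified of degree $2$). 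Note also that the hypothesis $H\neq pH$ is never used in your construction -- only in the side issue of ruling out $\mu _{p^{\infty }}\subseteq L$, which you explicitly leave unexecuted -- whereas it is precisely what must drive the construction. The technique the author actually uses for such constructions (visible in Section~3 of this paper, in the proof of (3.2)) is different: over $L=K(\zeta _{p})$ one adjoins a $p$-th root of an eigenspace projection $\rho (\mu )=\prod _{i}\varphi ^{i}(1+(\zeta _{p}-1)^{p}\mu ^{-1})^{l^{i}}$ with $v(\mu )$ chosen in $H\setminus pH$ in a suitable range relative to $v(p)/(p-1)$, verifies total ramification by a Hensel/Newton-polygon estimate, and descends to a cyclic degree-$p$ extension of $K$ by Albert's theorem; the $p$-divisibility of $H$ in case (ii) is exactly the obstruction to choosing such a $\mu $.
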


\medskip
Let $v$ be Henselian and ${\rm In}(K)$ be the class of inertial
$K$-algebras (i.e. those division $K$-algebras $D$, for which
$[D\colon K] = [\widehat D\colon \widehat K] \in \mathbb N$ and $\widehat
Z/\widehat K$ is a finite separable extension, where $\widehat Z$ is
the centre of $\widehat D$). Then:
\par
\medskip
(2.4) (i) For each finite-dimensional division $\widehat K$-algebra
$\widetilde \Delta $ whose centre is separable over $\widehat K$,
there exists $\Delta \in {\rm In}(K)$ with $\widehat \Delta \cong
\widetilde \Delta $ over $\widehat K$; $\Delta $ is uniquely
determined by $\widetilde \Delta $, up-to a $K$-isomorphism
\cite{JW}, Theorem~2.8 (a) (and is called an inertial lift of
$\widetilde D$ over $K$).
\par
(ii) In order that an inertial field extension $Y/K$ is a Galois
extension, it is necessary and sufficient that $\widehat Y/\widehat
K$ is Galois; when this occurs, the Galois groups $\mathcal{G}(Y/K)$
and $\mathcal{G}(\widehat Y/\widehat K)$ are canonically isomorphic
(cf. \cite{JW}, page 135).
\par
(iii) The set IBr$(K) = \{[I]\colon \ I \in {\rm In}(K)\}$ forms a
subgroup of Br$(K)$, which is canonically isomorphic to Br$(\widehat
K)$ (cf. \cite{JW}, Theorem~2.8 (b)).

\medskip
Assuming as above that $v$ is Henselian, let $D$ be an arbitrary
finite-dimensional division $K$-algebra, $D _{v}$ a completion of
$D$ with respect to the topology induced by $v$, and $Z(D)$ the
centre of $D$. It is known that there is a close relationship
between finite-dimensional division $K$-algebras and the
corresponding $K _{v}$-algebras, described in part by the following
statement:
\par
\medskip
(2.5) (i) $K$ is separably closed in $K _{v}$ and the valuation
$\bar v$ of $K _{v}$ continuously extending $v$ is Henselian;
\par
(ii) The natural mapping of $D \otimes _{K} K _{v}$ into $D _{v}$ is
a $K _{v}$-isomorphism whenever $Z(D)/K$ is separable; hence, $\rho
_{K/K _{v}}$ is injective and preserves indices and exponents;
\par
(iii) A finite extension $L$ of $K$ in $K _{\rm sep}$ embeds in an
algebra $U \in d(K)$ if and only if $L _{v}$ embeds in $U _{v}$ over
$K _{v}$.

\par
\medskip
Note also the following characterization of finite extensions of $K
_{v}$ in $K _{v,{\rm sep}}$, in case $v$ is Henselian (cf.
\cite{B1}, Ch. VI, Sect. 8, No 2, and \cite{JW}, page 135):
\par
\medskip
(2.6) (i) Every finite extension $L$ of $K _{v}$ in $K _{v,{\rm
sep}}$ is $K _{v}$-isomorphic to $\widetilde L \otimes _{K} K _{v}$
and $\widetilde L _{v}$, where $\widetilde L$ is the separable
closure of $K$ in $L$; $L/K _{v}$ is Galois if and only if
$\widetilde L/K$ is Galois; when this holds, $\mathcal{G}(L/K _{v})
\cong \mathcal{G}(\widetilde L/K)$ (canonically).
\par
(ii) $K _{\rm sep} \otimes _{K} K _{v}$ is a field, $K _{\rm sep}
\otimes _{K} K _{v} \cong K _{v,{\rm sep}}$ and $\mathcal{G}_{K}
\cong \mathcal{G}_{K _{v}}$.
\par
\medskip
Statements (2.5) reduce the study of index-exponent relations in
central division algebras over a Henselian field $(K, v)$ to the
special case in which $K = K _{v}$. Combining, for instance, (2.5)
and (2.6) (i) with \cite{Ya}, Proposition~2.1, and \cite{Ch1},
Theorem~3.1, one obtains the following result:

\medskip
\begin{prop} Let $(K, v)$ be a Henselian discrete
valued field. Then:
\par
{\rm (i)} $K$ is stable, provided that $\widehat K$ is almost
perfect, stable and PQL.
\par
{\rm (ii)} $K$ is absolutely stable if and only if $\widehat K$ is
quasilocal and almost perfect.
\end{prop}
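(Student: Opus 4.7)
The plan is to reduce the stability question about $K$ to the analogous question about its $v$-adic completion $K_v$ via (2.5), and then to import the structural theorems for complete discrete valued fields from \cite{Ya} and \cite{Ch1}. Throughout, I will use that the residue field of $K_v$ coincides canonically with $\widehat K$ and that $v_L$ denotes the unique prolongation of $v$ to any algebraic extension $L/K$.

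For (i), (2.5)(ii) gives that $\rho_{K/K_v}$ is injective and preserves both the index and the exponent of each central division $K$-algebra; in particular, stability of $K_v$ would imply stability of $K$. Since $K_v$ is complete (hence Henselian) discrete valued, with residue field identified with $\widehat K$, the assumption that $\widehat K$ is almost perfect, stable and PQL is exactly the hypothesis of \cite{Ya}, Proposition~2.1 applied to $K_v$; this yields the stability of $K_v$, whence that of $K$.

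For the direction $(\Leftarrow)$ of (ii), recall from the introductory remarks citing \cite{Ch5}, I, Proposition~2.3 that every quasilocal field is absolutely stable, and note that almost-perfectness is preserved by finite extensions (any finite extension of $\widehat L$ is a finite extension of $\widehat K$, so has a primitive element). Let $L/K$ be a finite extension; then $(L, v_L)$ is again Henselian discrete, with residue field $\widehat L$ a finite extension of $\widehat K$. Thus $\widehat L$ is again quasilocal and almost perfect, and in particular it is PQL, stable and almost perfect, so part (i) applied to $(L, v_L)$ gives the stability of $L$. Since $L$ was arbitrary, $K$ is absolutely stable.

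The direction $(\Rightarrow)$ of (ii) is where the main obstacle lies, and I would organize it in two steps. First, for each finite separable extension $\widehat L/\widehat K$ one produces its inertial lift $L/K$ via (2.4)(i); then $L$ is Henselian discrete valued with residue field $\widehat L$, its value group $v_L(L) = \mathbb{Z}$ is totally indivisible, and $L$ inherits stability from $K$, so the implication recalled in the introduction from \cite{Ch5}, I, Proposition~2.1 forces $\widehat L$ to be PQL. Second, in order to promote this to all (possibly inseparable) finite extensions of $\widehat K$ and to obtain the almost-perfectness of $\widehat K$ itself, I would invoke \cite{Ch1}, Theorem~3.1, which supplies exactly the missing structural information on residue fields of absolutely stable Henselian discrete valued fields. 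Controlling purely inseparable finite extensions of $\widehat K$ in positive residue characteristic is the technically delicate point, and it is here that the full strength of \cite{Ch1}, Theorem~3.1 is needed, since the inertial-lifting argument alone handles only the separable case.
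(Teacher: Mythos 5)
Your proposal is correct and follows essentially the same route as the paper, whose entire proof of this proposition is the single sentence that it is obtained by combining (2.5) and (2.6)(i) with \cite{Ya}, Proposition~2.1, and \cite{Ch1}, Theorem~3.1 --- precisely the reduction to the completion and the two external inputs you invoke. Your added details (deducing the backward implication of (ii) from part (i), and handling the separable case of the forward implication via inertial lifts and \cite{Ch5}, I, Proposition~2.1 before deferring the inseparable case and almost-perfectness to \cite{Ch1}, Theorem~3.1) are consistent elaborations of that same recipe.
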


\par
\medskip
\begin{rema} Statement (1.3) and the method of proving it in
\cite{Ch7}, combined with Proposition 2.2 and Scharlau's
generalization of Witt's decomposition theorem \cite{Sch}, make it
possible to study effectively the structure of the Brauer groups of
various types of stable fields (see also \cite{Ch5}, II, Sect. 3 and
Lemma~2.3, for the relations between Br$(E)$ and the character group
of $\mathcal{G}_{E}$). Specifically, they enable one to find series
of absolutely stable fields $F$ with Henselian valuations and
indivisible groups Br$(F)$, for infinitely many $p \in \mathbb P$
(see \cite{Ch1}, Corollary~4.7, and \cite{Ch7}, Proposition~6.8).
\end{rema}

\medskip
Let us note that, for every Henselian field $(K, v)$ with char$(K) =
p$ and $v(K) \neq pv(K)$, $r(p)_{K} = \infty $, i.e.
$\mathcal{G}(K(p)/K)$ is not finitely-generated as a pro-$p$-group
(cf. \cite{Ch8}, Remark~4.2). Therefore, the following two
propositions (proved in \cite{Ch8}, and generalizing \cite{Po},
(2.7)) fully characterize the case where $r(p)_{K} \in \mathbb N$.
The characterization depends on whether or not the minimal group
$G(K) \in {\rm Is}_{v}^{\prime }(K)$ containing $v(p)$ is
$p$-divisible.

\medskip
\begin{prop} Let $(K, v)$ be a Henselian
field with {\rm char}$(K) = 0$, {\rm char}$(\widehat K) = p > 0$ and
$G(K) = pG(K)$, and let $\varepsilon $ be a primitive $p$-th root of
unity in $K _{\rm sep}$. Then $r(p)_{K} \in \mathbb N$ if and only
if $r(p)_{K _{G(K)}} \in \mathbb N$ and one of the following
conditions holds:
\par
{\rm (i)} $v(K) = pv(K)$ or $\varepsilon \notin K$; in this case,
finite extensions of $K$ in $K(p)$ are inertial relative to $v
_{G(K)}$ and $\mathcal{G}(K(p)/K) \cong \mathcal{G}(K _{G(K)}(p)/K
_{G(K)})$;
\par
{\rm (ii)} $\varepsilon \in K$ and $v(K)/pv(K)$ is of order $p
^{\tau }$, for some $\tau \in \mathbb N$; in this case,
$\mathcal{G}(K(p)/K)$ is isomorphic to a topological semi-direct
product
\par\noindent
$\mathbb Z_{p} ^{\tau } \times \mathcal{G}(K _{G(K)}(p)/K
_{G(K)})$.
\par\noindent
When $r(p)_{K} \in \mathbb N$, $\widehat K$ is perfect, $\mathcal{G}(K
_{G(K)}(p)/K _{G(K)})$ is a trivial or a free pro-$p$-group, and
either $p \in P(\widehat K)$ or $K _{G(K)}(p)/K _{G(K)}$ is
immediate relative to $\hat v _{G(K)}$.
\end{prop}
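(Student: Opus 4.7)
The plan is to coarsen the valuation to residue characteristic zero and apply the classical theory of tame $p$-extensions of Henselian fields. Since $v(p)\in G(K)$, Proposition~2.1 shows that $v_{G(K)}$ is a Henselian valuation on $K$ with residue field $K _{G(K)}$ of characteristic $0$, so the analysis of $\mathcal{G}(K(p)/K)$ can proceed in the tame setting. The hypothesis $G(K)=pG(K)$ gives $G(K)\subseteq pv(K)$, and hence the canonical identification $v(K)/pv(K)\cong v_{G(K)}(K)/pv_{G(K)}(K)$; so the right-hand quotient controls the tame totally ramified $p$-part of $K(p)/K$ relative to $v_{G(K)}$.

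For Case~(ii), with $\varepsilon\in K$ and $v(K)/pv(K)$ of order $p^{\tau}$, I would choose $\pi_1,\dots,\pi_{\tau}\in K^{\ast}$ whose values span $v(K)/pv(K)$ and form the Kummer tower $T=K\bigl(\pi_i^{1/p^n}\colon 1\le i\le\tau,\ n\in\mathbb N\bigr)$. Henselianity together with $\varepsilon\in K$ imply that $T/K$ is totally ramified with $\mathcal{G}(T/K)\cong\mathbb Z_p^{\tau}$, and the complementary subextension is the maximal inertial $p$-subextension $K_{\mathrm{nr}}$ of $K(p)/K$ relative to $v_{G(K)}$: one verifies $K(p)=T\cdot K_{\mathrm{nr}}$, $T\cap K_{\mathrm{nr}}=K$, and $\mathcal{G}(K_{\mathrm{nr}}/K)\cong\mathcal{G}(K _{G(K)}(p)/K _{G(K)})$ via the residue map, giving the semidirect product in (ii). For Case~(i) with $v(K)=pv(K)$, the group $v_{G(K)}(K)$ is $p$-divisible, so no nontrivial tame totally ramified $p$-extension exists and $\mathcal{G}(K(p)/K)\cong\mathcal{G}(K _{G(K)}(p)/K _{G(K)})$. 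When $\varepsilon\notin K$, I would pass to $K'=K(\varepsilon)$, of degree dividing $p-1$ and inertial relative to $v_{G(K)}$; any hypothetical cyclic degree-$p$ totally ramified subextension $L/K$ would pull back to $L'=LK'=K'(\alpha^{1/p})$ with $v_{G(K')}(\alpha)\not\equiv 0\pmod p$. Galois descent would force $\mathcal{G}(K'/K)$ to act trivially on $v_{G(K')}(\alpha)\bmod p$, but this action is by the cyclotomic character of $\mathcal{G}(K'/K)$, which is nontrivial exactly when $\varepsilon\notin K$; the contradiction yields (i). The equivalence of finiteness of $r(p)_K$ and of $r(p)_{K _{G(K)}}$ (up to the additive contribution $\tau$ in case (ii)) follows by counting ranks.

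For the concluding assertions, I would analyse the Henselian valuation $\hat v_{G(K)}$ of $K _{G(K)}$, whose value group $G(K)$ is $p$-divisible and whose residue field is $\widehat K$ of characteristic $p$. The argument behind \cite{Ch8}, Remark~4.2 (cited in the paragraph preceding the proposition) forces $r(p)_{K _{G(K)}}=\infty$ as soon as $\widehat K$ is imperfect, by building infinitely many independent cyclic $p$-extensions of $K _{G(K)}$ out of a non-$p$-th-power in $\widehat K$ together with the $p$-divisibility of $G(K)$; hence $r(p)_K\in\mathbb N$ forces $\widehat K$ perfect. Under perfectness, each cyclic $p$-extension of $K _{G(K)}$ is either inertial (a lift from $\widehat K(p)$) or immediate, and the finite-rank hypothesis collapses the structure to the stated trivial-or-free pro-$p$ dichotomy, with the alternative between $p\in P(\widehat K)$ and immediateness of $K _{G(K)}(p)/K _{G(K)}$ corresponding to which of the two kinds of extensions contributes. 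The main obstacle is the Galois-descent step in Case~(i) with $\varepsilon\notin K$: one must identify the tame totally ramified subgroup of $\mathcal{G}(K'(p)/K')$ as a Tate-twisted $\mathbb Z_p$-module on which $\mathcal{G}(K'/K)$ acts through the cyclotomic character, and show that the fixed part modulo $p$ vanishes when $\varepsilon\notin K$. A secondary difficulty is the final dichotomy for $\mathcal{G}(K _{G(K)}(p)/K _{G(K)})$, which rests on the delicate interplay between inertial and immediate $p$-extensions in the mixed-characteristic Henselian setting developed in \cite{Ch8}.
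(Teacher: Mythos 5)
The paper does not actually prove Proposition 2.5: it is imported wholesale from \cite{Ch8} (``the following two propositions (proved in \cite{Ch8}\dots)''), so there is no in-paper argument to measure you against. Judged on its own terms, your outline of the main dichotomy is the right one and is essentially the classical Pop-style analysis: coarsen to $v_{G(K)}$, observe that $v_{G(K)}(p)=0$ forces ${\rm char}(K_{G(K)})=0$ so that all of $K(p)/K$ is tame relative to $v_{G(K)}$, split off the inertial part via (2.4)(ii), identify the inertia subgroup with $\mathbb Z_p^{\tau}$ using $v(K)/pv(K)\cong v_{G(K)}(K)/pv_{G(K)}(K)$, and kill the ramified part when $\varepsilon\notin K$ by the cyclotomic-character descent ($\sigma(\alpha)\alpha^{-s}\in K(\varepsilon)^{\ast p}$ forces $(1-s)v(\alpha)\in pv(K(\varepsilon))$, impossible for $s\not\equiv 1\bmod p$ if $v(\alpha)\notin pv$). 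One imprecision: your tower $T=K(\pi_i^{1/p^n})$ is not Galois over $K$ unless $K$ contains all $p$-power roots of unity, so the assertion $\mathcal{G}(T/K)\cong\mathbb Z_p^{\tau}$ is not literally meaningful; the correct formulation is that $\mathcal{G}(K(p)/K_{\mathrm{nr}})\cong\mathbb Z_p^{\tau}$ is the normal subgroup and $\mathcal{G}(K(p)/T)$ is a complement mapping isomorphically onto $\mathcal{G}(K_{\mathrm{nr}}/K)\cong\mathcal{G}(K_{G(K)}(p)/K_{G(K)})$. The verification that $TK_{\mathrm{nr}}=K(p)$ (every unit of $K_{\mathrm{nr}}$ is a $p$-th power up to principal units because $\overline{K_{\mathrm{nr}}}=K_{G(K)}(p)$ is $p$-closed) should be said explicitly, but it is routine.

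The genuine gap is in the concluding sentence of the proposition. Your treatment of ``$r(p)_K\in\mathbb N\Rightarrow\widehat K$ perfect'' appeals to ``the argument behind \cite{Ch8}, Remark~4.2,'' but that remark concerns ${\rm char}(K)=p$, and the in-paper surrogate (Lemma 3.2) is stated under the hypothesis $v(K)\neq pv(K)$, which fails for $(K_{G(K)},\hat v_{G(K)})$ since its value group $G(K)$ is $p$-divisible; so producing infinitely many independent cyclic degree-$p$ extensions with purely inseparable residue extensions from a non-$p$-th power of $\widehat K$ needs an actual construction (Albert's theorem when $\varepsilon\notin K_{G(K)}$), not a citation. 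More seriously, the claim that $\mathcal{G}(K_{G(K)}(p)/K_{G(K)})$ is trivial or a \emph{free} pro-$p$-group is not addressed at all: freeness amounts to ${\rm cd}_p\le 1$, i.e.\ the vanishing of the relevant $H^2$, and nothing in your ``inertial or immediate'' dichotomy produces that. In the case $p\in P(\widehat K)$ one can hope to reduce to freeness of $\mathcal{G}(\widehat K(p)/\widehat K)$ (Artin--Schreier--Witt), but in the immediate case one must show that the Galois group of the maximal immediate $p$-extension of a mixed-characteristic Henselian field with perfect residue field and $p$-divisible value group is free of finite rank; this is exactly the delicate content of \cite{Ch8} that you acknowledge but do not supply. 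As it stands, the proposal proves cases (i) and (ii) and the finiteness equivalence, but the final assertion remains unestablished.
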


The concluding part of our next result is implied by (2.3),
\cite{TY}, Proposition~2.2.

\medskip
\begin{prop} Let $(K, v)$ be a Henselian field with {\rm char}$(K) =
0$ and {\rm char}$(\widehat K) = p \neq 0$. Then the following
conditions are equivalent:
\par
{\rm (i)} $G(K) \neq pG(K)$ and $r(p)_{K} \in \mathbb N$;
\par
{\rm (ii)} $\widehat K$ is finite, $G(K)$ is cyclic, and in case $K$
contains a primitive $p$-th root of unity, the group $v
_{G(K)}(K)/pv _{G(K)}(K)$ is finite.
\par\noindent
When these conditions hold, finite extensions of $K$ in $K _{\rm
sep}$ are defectless.
\end{prop}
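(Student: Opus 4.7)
The plan is to reduce both implications to a dimension computation for the $\mathbb{F}_p$-vector space $K^{\ast}/(K^{\ast})^p$ via Kummer theory, after the standard reduction to the case that $K$ contains a primitive $p$-th root of unity $\varepsilon $. Since $[K(\varepsilon ):K]$ divides $p-1$, one has $r(p)_{K(\varepsilon )}=r(p)_K$; a short prime-to-$p$ bookkeeping likewise transfers the conditions in (ii) between $K$ and $K(\varepsilon )$. Once $\varepsilon \in K$, $r(p)_K=\dim_{\mathbb{F}_p}K^{\ast}/(K^{\ast})^p$, and the exact sequences
\[
0\to U/U^p\to K^{\ast}/(K^{\ast})^p\to v(K)/pv(K)\to 0,\qquad U=O_v(K)^{\ast},
\]
and
\[
0\to (1+M_v(K))/(1+M_v(K))^p\to U/U^p\to\widehat K^{\ast}/(\widehat K^{\ast})^p\to 0
\]
split the question into three pieces. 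Lemma 2.2, whose alternative (ii) is ruled out by $G(K)\neq pG(K)$, supplies a totally ramified degree-$p$ extension in $K(p)$, ensuring the first summand is non-zero.

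For (i)$\,\Rightarrow\,$(ii), finiteness of $r(p)_K$ makes each of the three summands finite. Isolatedness of $G(K)$ in $v(K)$ yields $G(K)\cap pv(K)=pG(K)$, whence $G(K)/pG(K)\hookrightarrow v(K)/pv(K)$ is finite. The cyclicity of $G(K)$ does not follow from this alone (dense subgroups of $\mathbb R$ such as $\mathbb Z[1/q]$ with $q\neq p$ have finite mod-$p$ quotient without being cyclic), but rather from a graded analysis of $(1+M_v(K))/(1+M_v(K))^p$ along the filtration of $1+M_v(K)$ by elements of increasing value: the graded pieces are additively isomorphic to $\widehat K$, and the $p$-th-power map relates values at $\alpha $ to those at $p\alpha $ and $\alpha +v(p)$ (the classical Artin-Hasse threshold phenomenon), so finiteness of the whole quotient is incompatible with an infinite chain of distinct positive values in $G(K)$; hence $G(K)$ is cyclic. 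The same analysis, combined with the inertial bound $r(p)_{\widehat K}\le r(p)_K$ from (2.4) and the Artin-Schreier identification $r(p)_{\widehat K}=\dim_{\mathbb F_p}\widehat K/\wp(\widehat K)$ in characteristic $p$, forces $\widehat K$ to be finite. The finiteness of $v_{G(K)}(K)/pv_{G(K)}(K)$ then follows from the snake lemma on $0\to G(K)\to v(K)\to v_{G(K)}(K)\to 0$.

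For (ii)$\,\Rightarrow\,$(i) the computation runs in reverse: $G(K)$ cyclic gives $|G(K)/pG(K)|=p$, so $v(K)/pv(K)$ is finite by the snake lemma; $\widehat K$ finite of characteristic $p$ makes $\widehat K^{\ast}/(\widehat K^{\ast})^p=0$; and the principal-unit analysis, with every ingredient finite, gives $(1+M_v(K))/(1+M_v(K))^p$ finite. Therefore $K^{\ast}/(K^{\ast})^p$ is finite, i.e.\ $r(p)_K<\infty $. The defectlessness of finite separable extensions of $K$ then follows from the Ostrowski-Draxl equality (2.3) together with \cite{TY}, Proposition~2.2, as the author indicates.

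The main obstacle I foresee is the quantitative control of $(1+M_v(K))/(1+M_v(K))^p$ when $v$ has rank greater than one: the $p$-adic logarithm need not converge, so the analysis must proceed piecewise along the filtration of $1+M_v(K)$ using Artin-Hasse exponentials at each graded piece and carefully tracking the threshold value $v(p)/(p-1)$. This is the technical heart of the argument, and the point at which I would rely most heavily on \cite{TY}, Proposition~2.2, both for the principal-unit estimate itself and, via (2.3), for the defectlessness conclusion.
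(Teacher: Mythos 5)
The paper does not actually contain a proof of this proposition: the sentence preceding Proposition 2.5 attributes both Propositions 2.5 and 2.6 to \cite{Ch8}, and only the concluding defectlessness assertion is derived in-text from (2.3) and \cite{TY}, Proposition~2.2 --- which is also how you handle that part, so the comparison must rest on the internal correctness of your argument. There is a genuine gap, and it sits exactly where the statement of the proposition is most careful: the case $\varepsilon \notin K$. Your opening reduction is false. One does not have $r(p)_{K(\varepsilon )}=r(p)_K$ (already $r(p)_{\mathbb Q _{p}}=2$ while $r(p)_{\mathbb Q _{p}(\varepsilon )}=p+1$ for $p>2$), and finiteness does not transfer either: since $[K(\varepsilon )\colon K]$ is prime to $p$, $r(p)_K$ equals the dimension of the $\omega $-eigenspace of $K(\varepsilon )^{\ast }/K(\varepsilon )^{\ast p}$ under $\mathcal{G}(K(\varepsilon )/K)$, where $\omega $ is the mod-$p$ cyclotomic character, whereas the valuation-theoretic summand $v(K(\varepsilon ))/pv(K(\varepsilon ))$ lies in the trivial eigenspace and therefore contributes to $r(p)_K$ only when $\varepsilon \in K$. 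Concretely, for $p>2$ and $K=\bigcup _{n}\mathbb Q _{p}((t _{1}))\cdots ((t _{n}))$ with its natural Henselian valuation, every degree-$p$ extension is inertial relative to $v _{G(K)}$ (a totally, hence tamely, ramified cyclic degree-$p$ extension of a Henselian field with residue characteristic $0$ forces $\varepsilon \in K$), so $r(p)_{K}=r(p)_{\mathbb Q _{p}}=2$ and conditions (i) and (ii) both hold, yet $v _{G(K)}(K)/pv _{G(K)}(K)$, $K ^{\ast }/K ^{\ast p}$ and $r(p)_{K(\varepsilon )}$ are all infinite. This breaks both of your implications as written: in (ii)$\Rightarrow $(i) you conclude via the snake lemma that $v(K)/pv(K)$ is finite, using a finiteness of $v _{G(K)}(K)/pv _{G(K)}(K)$ that is simply not among the hypotheses when $\varepsilon \notin K$; and in (i)$\Rightarrow $(ii) the finiteness of the three summands of $K ^{\ast }/K ^{\ast p}$ does not follow from $r(p)_K<\infty $.

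A second, smaller defect: you infer that $\widehat K$ is finite from $r(p)_{\widehat K}=\dim _{\mathbb F _{p}}\widehat K/\wp (\widehat K)<\infty $, which is a non sequitur ($\overline {\mathbb F _{p}}$ has $r(p)=0$). The finiteness of $\widehat K$ must come from the graded pieces of the principal-unit filtration themselves, which at levels $\alpha \in G(K)\setminus pG(K)$ below the Artin--Hasse threshold are copies of the additive group of $\widehat K$; your ``same analysis'' clause gestures at this, but the justification you actually cite does not establish it. For the case $\varepsilon \in K$ your framework --- Kummer theory, the unit filtration, the threshold $v(p)/(p-1)$, and the derivation of the cyclicity of $G(K)$ from the impossibility of infinitely many independent graded levels --- is the standard and essentially correct route (and is in the spirit of \cite{Po} and \cite{Ch8}). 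What is missing throughout is the $\mathcal{G}(K(\varepsilon )/K)$-eigenspace bookkeeping that governs the case $\varepsilon \notin K$; this is precisely the content of the case distinction built into condition (ii) and of the companion Proposition 2.5 (i), and it cannot be dismissed as ``short prime-to-$p$ bookkeeping.''
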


\medskip
\begin{coro} Assume that $(K, v)$ is a Henselian $p$-quasilocal field, such that {\rm char}$(\widehat K) = p$,
$r(p)_{K} \ge 2$ and $v(K)/pv(K)$ is noncyclic. Then:
\par
{\rm (i)} $v(K)/pv(K)$ is of order $p ^{2}$ and finite extensions of
$K$ in $K(p)$ are totally ramified;
\par
{\rm (ii)} $K$ contains a primitive $p ^{n}$-th root of unity, for
each $n \in \mathbb N$, and $\mathcal{G}(K(p)/K)$ is isomorphic to
the additive group $\mathbb Z_{p} ^{2}$ with respect to its standard
topology.
\end{coro}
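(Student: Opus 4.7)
My strategy is to reduce Corollary 2.6 to the explicit structure theorems of Propositions 2.4 and 2.5, using the $p$-quasilocal hypothesis as the key constraint that pins down the remaining parameters.

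I would first dispose of the case $G(K) \neq pG(K)$. Since $r(p)_K \ge 2$ is finite, Proposition 2.5 applies and forces $\widehat K$ finite and $G(K)$ cyclic, and (by its concluding clause) makes finite separable extensions of $K$ defectless. Because $\widehat K$ is a finite field of characteristic $p$, any inertial $p$-extension of $K$ comes from a single $\mathbb Z_p$-tower (generated by the $\mathbb F_{p^n}$'s, lifted via (2.4)), so the extra generator demanded by $r(p)_K \ge 2$ must come from ramification. The noncyclicity of $v(K)/pv(K)$ supplies at least two independent ramified cyclic degree-$p$ extensions inside $K(p)$, and the $p$-quasilocal property (via (1.1) and (1.2)(ii)) lets me determine $|v(K)/pv(K)|$ exactly and rule out residue field growth in $K(p)$; this would already give total ramification in $K(p)$ and the desired order $p^2$.

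In the complementary case $G(K) = pG(K)$, Proposition 2.4 applies. Alternative (i) of that proposition would make finite extensions of $K$ in $K(p)$ inertial relative to $v_{G(K)}$, which is incompatible with producing two linearly disjoint $v$-totally ramified cyclic degree-$p$ extensions stemming from the noncyclic quotient $v(K)/pv(K)$; so alternative (ii) must hold: $\varepsilon \in K$, $|v(K)/pv(K)| = p^{\tau}$, and $\mathcal{G}(K(p)/K) \cong \mathbb Z_p^{\tau} \rtimes \mathcal{G}(K_{G(K)}(p)/K_{G(K)})$ with $\tau \ge 2$. To pin down $\tau = 2$ and force the second factor to be trivial, I would invoke the $p$-quasilocal property through (1.1) and the Kummer-theoretic classification of degree-$p$ cyclic algebras (available since $\varepsilon \in K$): any extra generators beyond the two ramified ones would produce cyclic division algebras of exponent $p$ over suitable intermediate $p$-extensions whose corestrictions into $\mathrm{Br}(K)_p$ fail to be injective, contradicting (1.1).

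Once these reductions deliver part (i) together with $\mathcal{G}(K(p)/K) \cong \mathbb Z_p^2$, the second assertion of (ii)—namely $\mu_{p^{\infty}} \subset K$—follows by a Henselian Kummer-lifting argument: starting from $\varepsilon \in K$, the extension $K(\mu_{p^n})/K$ lies in $K(p)$ and is abelian of exponent dividing $p^{n-1}$; since every finite quotient of $\mathbb Z_p^2$ is a product of two cyclic $p$-groups, an induction on $n$ combined with the Kummer pairing forces $K(\mu_{p^n}) = K$ for all $n$. The step I expect to be the main obstacle is the rigorous elimination of an extra free factor $\mathcal{G}(K_{G(K)}(p)/K_{G(K)})$ in the semidirect decomposition of Proposition 2.4 (ii): concretely, showing that $p$-quasilocality together with the noncyclicity of $v(K)/pv(K)$ is genuinely incompatible with a nontrivial residue-level pro-$p$ Galois factor. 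This will require a careful simultaneous analysis of the corestriction injections guaranteed by (1.1) and the Brauer-group rank constraints imposed by (1.2)(ii).
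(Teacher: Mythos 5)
Your plan has the right outer shell (the dichotomy $G(K)=pG(K)$ versus $G(K)\neq pG(K)$, Propositions 2.5 and 2.6, and the $p$-quasilocal property as the decisive constraint), but it is missing the single computation that actually drives the paper's proof and that all of your harder steps secretly depend on. Namely: for \emph{any} two distinct degree-$p$ extensions $K_{1}, K_{2}$ of $K$ in $K(p)$, statement (1.2)(i) gives $N(K_{1}/K)N(K_{2}/K)=K^{\ast}$, and since the value of a norm from a degree-$p$ extension $K_{i}$ lies in $pv(K_{i})$, this forces $v(K)=pv(K_{1})+pv(K_{2})$. Combined with (2.3) this immediately yields everything in part (i) at once: $v(K)/pv(K)$ has order exactly $p^{2}$, \emph{every} degree-$p$ extension $L$ in $K(p)$ is totally ramified with the groups $pv(L)$ pairwise distinct, hence $r(p)_{K}=2$ by counting the $p+1$ intermediate subgroups, and $p\notin P(\widehat K)$ because an inertial degree-$p$ extension cannot exist. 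Your proposal never articulates a mechanism for "ruling out residue field growth in $K(p)$" --- you cite (1.1) and (1.2)(ii) and promise that extra generators would make corestrictions non-injective, but you flag this yourself as the main obstacle and leave it unresolved; it is precisely the point where the argument lives. Relatedly, your treatment of the case $G(K)\neq pG(K)$ is backwards: Proposition 2.6 forces $\widehat K$ finite there, so $p\in P(\widehat K)$ and $K$ has an inertial degree-$p$ extension, which is \emph{incompatible} with the total ramification you are trying to prove. That case must be shown to be vacuous (as the paper does, deducing ${\rm char}(K)=0$ and $G(K)=pG(K)$ from $r(p)_{K}=2$, the infinitude of $\widehat K$, and \cite{Ch8}, Remark~4.2), not handled as a branch in which the conclusion holds.

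The roots-of-unity step in part (ii) is also a non sequitur as written: the fact that every finite quotient of $\mathbb Z_{p}^{2}$ is a product of two cyclic $p$-groups places no obstruction on $K(\mu_{p^{n}})/K$ being a nontrivial such quotient, so "induction on $n$ combined with the Kummer pairing" does not force $K(\mu_{p^{n}})=K$. The paper obtains $\mu_{p^{\infty}}\subset K$ from the already-established total ramification of all finite extensions of $K$ in $K(p)$ relative to $v_{G(K)}$ together with \cite{Ch3}, Lemma~1.1; some input of this valuation-theoretic kind (roots of unity of order prime to the relevant residue characteristic generate non-totally-ramified extensions) is needed, and purely group-theoretic information about $\mathcal{G}(K(p)/K)$ will not supply it. Once the norm-group computation is in place, the elimination of the residue factor $\mathcal{G}(K_{G(K)}(p)/K_{G(K)})$ that worries you becomes routine: $p\notin P(\widehat K)$ plus the concluding clause of Proposition 2.5 kills it, and $\tau=2$ is just the already-computed order of $v(K)/pv(K)$.
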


\begin{proof}
Denote by $\Sigma _{p}(K)$ the set of extensions of $K$ in $K(p)$ of
degree $p$, and by $V _{p}(K)$ the set of subgroups of $v(K)$
properly including $pv(K)$ and different from $v(K)$. Let $K _{1}$
and $K _{2}$ be different elements of $\Sigma _{p}(K)$. Then (1.2)
(i) yields $N(K _{1}/K)N(K _{2}/K) = K ^{\ast }$, which means that
$v(K) = pv(K _{1}) + pv(K _{2})$. In other words, it follows from
(2.3) and the noncyclicity of $v(K)/pv(K)$ that $v(K)/pv(K)$ is of
order $p ^{2}$ and the mapping of $\Sigma _{p}(K)$ into $V _{p}(K)$
by the rule $L \to pv(L)$, $L \in \Sigma _{p}(K)$, is well-defined
and bijective. This observation proves that $r(p)_{K} = 2$ and every
$L \in \Sigma _{p}(K)$ is totally ramified over $K$. It is now easy
to see from (2.4) (i) that $p \notin P(\widehat K)$. This implies
$\widehat K$ is infinite, so it follows from Proposition 2.6 and
\cite{Ch8}, Remark~4.2, that char$(K) = 0$ and $G(K) = pG(K)$.
Applying Proposition 2.5, one obtains further that $\widehat K$ is
perfect and $p \notin P(K _{G(K)})$. Since char$(K _{G(K)}) = 0$, $v
_{G(K)}$ is Henselian and $p \notin P(K _{G(K)})$, this enables one
to deduce from (2.3) that finite extensions of $K$ in $K(p)$ are
totally ramified relative to $v _{G(K)}$ (and because of the
equality $G(K) = pG(K)$, they have the same property relative to
$v$). In view of \cite{Ch3}, Lemma~1.1, these observations show that
$K$ contains a primitive $p ^{n}$-th root of unity, for each $n \in
\mathbb N$. As $v(K)/pv(K)$ is of order $p ^{2}$, it is now easy to
see from Proposition 2.5 that $\mathcal{G}(K(p)/K) \cong \mathbb Z
_{p} ^{2}$, which completes the proof of Corollary 2.7.
\end{proof}

\medskip
\begin{rema} It is known that if $(K, v)$ is a Henselian field satisfying the conditions of Proposition 2.6, then
Br$(K _{G(K)})$ $\cong \mathbb Q /\mathbb Z$ and Br$(K _{G(K)})$
embeds in Br$(K)$ (see (2.4) (iii) and \cite{S2}, Ch. XII, Sect. 3).
Also, it follows from (2.1) (i), (2.4) (ii), (2.6) and \cite{S1},
Ch. II, Theorems~3 and 4, that $r(p)_{K_{G(K)}} \ge 2$.
\end{rema}

\section{On the Brauer group of a Henselian $p$-quasilocal
field with a $p$-indivisible value group}

In this Section we prove that if $(K, v)$ is a Henselian
$p$-quasilocal field satisfying the conditions of Theorem 1.1, and
if $K$ possesses an immediate extension in $K(p)$ of degree $p$,
then Br$(K) _{p}$ is isomorphic to $\mathbb Z(p ^{\infty })$.

\medskip
\begin{theo} Under the hypotheses of Theorem 1.1,
suppose that $K(p)$ contains as a subfield an immediate extension
$I$ of $K$ of degree $p$. Then $\widehat K$ is perfect, $v(K)/pv(K)$
is of order $p$ and $\nabla _{0}(K) \subset N(I/K)$.
\end{theo}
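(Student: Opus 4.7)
The plan is to exploit the $p$-quasilocal characterization (1.2)(i) repeatedly. Any two distinct degree-$p$ subfields $M_1, M_2$ of $K(p)/K$ satisfy $M_1\cap M_2 = K$, so (1.2)(i) gives $N(M_1/K)\cdot N(M_2/K) = K^*$. In a Henselian degree-$p$ extension $L/K$ one has $v(N_{L/K}(x)) = p\cdot v_L(x)$, so this passes to valuations as $pv(M_1) + pv(M_2) = v(K)$. I will apply this with $M_1 = I$ and carefully chosen $M_2$.

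First, since $I/K$ is immediate of degree $p$, the Ostrowski--Draxl bound (2.3) forces $\mathrm{char}(\widehat K) = p$. For the cyclicity of $v(K)/pv(K)$, I would apply Lemma 2.2 to produce a totally ramified degree-$p$ extension $\Phi\subset K(p)$, ruling out alternative (ii) of that Lemma by invoking Propositions 2.5--2.6 together with Proposition 2.1: in the regime of (ii), their classifications force every degree-$p$ subfield of $K(p)/K$ to be inertial relative to $v_{G(K)}$, contradicting the immediacy of $I$. The displayed valuation identity applied to $(I, \Phi)$ yields $pv(K) + pv(\Phi) = v(K)$; writing $v(\Phi) = v(K) + \mathbb{Z} g$ with $pg\in v(K)$ and $g\notin v(K)$, the image of $pv(\Phi)$ in $v(K)/pv(K)$ is generated by $pg + pv(K)$, which has order exactly $p$, giving $[v(K):pv(K)] = p$.

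For perfection of $\widehat K$, argue by contradiction: suppose $\bar a\in\widehat K\setminus\widehat K^p$ and lift it to a unit $a\in O_v(K)^*$. In mixed characteristic with $\zeta_p\in K$, the Kummer extension $L = K(a^{1/p})\subset K(p)$ has degree $p$ with $v(L) = v(K)$ and $\widehat L = \widehat K(\bar a^{1/p}) \neq \widehat K$, so $L\neq I$; (1.2)(i) then gives $pv(K) + pv(L) = pv(K) = v(K)$, contradicting $v(K)\neq pv(K)$. The remaining configurations (mixed characteristic without $\zeta_p$, and equal characteristic $p$) are reduced to this one by an isolated-subgroup descent: passing to $v_H$ for an appropriate $H\in\mathrm{Is}_v'(K)$ using Proposition 2.1 (and (2.1)), transporting the imperfection witness down via (2.4), and producing the analogous auxiliary $L\subset K(p)$ from the structure given in Propositions 2.5--2.6. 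Handling this descent uniformly across configurations, especially in equal characteristic $p$ where imperfection of $\widehat K$ does not produce separable extensions directly, is the main obstacle of the proof.

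Finally, $\nabla_0(K)\subseteq N(I/K)$ follows by a Hensel--Rychlik computation (2.2): once $\widehat K$ is perfect and $v(K)/pv(K)\cong\mathbb{Z}/p$, any element of $\nabla_0(K)$ can be factored as a $p$-th power in $K^*$ times a principal unit whose residue satisfies the hypothesis of Hensel's lemma applied to the minimal polynomial of a chosen generator of $I$ (in its Artin--Schreier or Kummer presentation according to $\mathrm{char}(K)$), placing the unit in the image of $N_{I/K}$ and hence $\nabla_0(K)\subseteq K^{*p}\cdot N(I/K) = N(I/K)$.
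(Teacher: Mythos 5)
Your reductions for the first two assertions are close in spirit to the paper's own (the paper also plays (1.2)(i) off against valuations of norms: two distinct degree-$p$ subfields of $K(p)$ that are both non--totally-ramified force $v(K)=pv(K)$), but both are left with real holes. For perfection you only treat the Kummer case and explicitly defer the equal-characteristic-$p$ and no-$\zeta_p$ cases to an unspecified ``isolated-subgroup descent''; the paper closes exactly this gap with Lemma 3.2 (imported from \cite{Ch8}), which manufactures, for imperfect $\widehat K$, \emph{separable} degree-$p$ extensions $\Lambda\subset K(p)$ with $\widehat\Lambda/\widehat K$ purely inseparable of degree $p$ in all characteristics, and then quotes Lemma 3.3(ii). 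For the order of $v(K)/pv(K)$, your dismissal of alternative (ii) of Lemma 2.2 does not work as stated: in that regime every degree-$p$ subfield of $K(p)$ is inertial relative to the coarsening $v_{G(K)}$, but that is perfectly compatible with $I/K$ being immediate relative to $v$ (the residue extension $I_{G(K)}/K_{G(K)}$ can itself be a defect extension of $(K_{G(K)},\hat v_{G(K)})$), so there is no contradiction with the existence of $I$. The paper instead gets cyclicity of $v(K)/pv(K)$ from Corollary 2.7.

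The decisive gap, however, is the third assertion, which is the actual content of Theorem 3.1 and occupies most of Section 3. Your closing paragraph asserts that $\nabla_0(K)\subseteq N(I/K)$ ``follows by a Hensel--Rychlik computation applied to the minimal polynomial of a chosen generator of $I$.'' This cannot work: since $I/K$ is immediate and $K$ is Henselian, the reduction of the minimal polynomial of any integral generator of $I$ is $(X-\bar c)^p$ for some $\bar c\in\widehat K$, so the hypothesis $2v(f'(a))<v(f(a))$ of (2.2) is never available from residue data alone. To apply Hensel--Rychlik to the norm equation $N_{I/K}(x)=\beta$ one must first produce elements $r\in O_v(I)$ with $v_I(\psi(r)-r)$ strictly smaller than (roughly half of) $v(\beta-1)$ for every $\beta\in\nabla_0(K)$ --- i.e., one must bound the ``depth of ramification'' of the defect extension $I/K$ from below --- and producing such $r$ is precisely what the paper's statement (3.1), Lemma 3.7, Lemma 3.9 and the construction (3.2) accomplish, via auxiliary totally ramified Artin--Schreier/Kummer extensions $J$, $L_{\tilde\mu}$, the $p$-quasilocal norm identities $\zeta(\lambda)\lambda^{-1}\in N(IJ/J)$, quantitative valuation estimates, and a case split on the structure of ${\rm Is}_v(K)$. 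None of this is present or even gestured at in your proposal, so the key inclusion is unproved.
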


The proof of Theorem 3.1 relies on the following two lemmas lemma,
the first of which has been proved in \cite{Ch8}.

\medskip
\begin{lemm} In the setting of Lemma 2.2, suppose that $\widehat K$ is
imperfect and $W _{p}(K)$ is the set of those $\Lambda \in
I(K(p)/K)$, for which $[\Lambda \colon K] = [\widehat \Lambda \colon
\widehat K] = p$ and $\widehat \Lambda $ is purely inseparable over
$\widehat K$. Then:
\par
{\rm (i)} $W _{p}(K)$ is infinite except, possibly, in the case
where {\rm char}$(K) = 0$, $v(p) \notin pv(K)$ and $K$ does not
contain a primitive $p$-th root of unity;
\par
{\rm (ii)} When {\rm char}$(K) = 0$ and $v(p) \notin pv(K)$, there
exists a field $\Lambda ^{\prime } \in I(K(p)/K)$, such that
$[\Lambda ^{\prime }\colon K] = p$ and $v(p) \in pv(\Lambda ^{\prime
})$.
\end{lemm}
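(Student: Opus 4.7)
The plan is to produce explicit degree-$p$ extensions in $W _{p}(K)$ for (i) by case analysis on ${\rm char}(K)$ and the presence of $\zeta _{p}$ in $K$, and to construct a specific totally ramified degree-$p$ extension for (ii) from Lemma~2.2(i). In all sub-cases of (i), the infinite supply comes from the imperfectness of $\widehat K$: the group $\widehat K ^{\times }/\widehat K ^{\times p}$ is infinite, and remains so in any finite extension of $\widehat K$ of degree coprime to $p$.

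The three sub-cases of (i) correspond to ${\rm char}(K) = p$, to ${\rm char}(K) = 0$ with $\zeta _{p} \in K$, and to ${\rm char}(K) = 0$ with $\zeta _{p} \notin K$ and $v(p) \in pv(K)$. In the first, for each $t \in O _{v}(K) ^{\times }$ with $\bar t \notin \widehat K ^{p}$ and a fixed $\pi \in K ^{\times }$ with $v(\pi ) > 0$, set $\Lambda = K(\beta )$ with $\beta ^{p} - \beta = t/\pi ^{p}$; the substitution $\beta ^{\prime } = \pi \beta $ converts the equation into $(\beta ^{\prime }) ^{p} - \pi ^{p-1}\beta ^{\prime } - t = 0$, which reduces modulo $M _{v}(K)$ to $X ^{p} - \bar t$, and (2.3) together with Hensel's lemma gives $\widehat \Lambda = \widehat K(\bar t ^{1/p})$, so $\Lambda \in W _{p}(K)$. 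In the second, the direct Kummer extensions $\Lambda = K(a ^{1/p})$ with $a \in O _{v}(K) ^{\times }$, $\bar a \notin \widehat K ^{p}$ suffice. In the third, set $K ^{\prime } = K(\zeta _{p})$, $\Gamma = \mathcal{G}(K ^{\prime }/K)$, $d = [K ^{\prime }\colon K] \mid p - 1$, so $\widehat{K ^{\prime }}$ is imperfect. Inflation--restriction with $\mathbb F_{p}$-coefficients identifies cyclic degree-$p$ extensions of $K$ inside $K(p)$ with the $\chi $-isotypic component of $(K ^{\prime }) ^{\times }/(K ^{\prime }) ^{\times p}$, where $\chi \colon \Gamma \to \mathbb F_{p} ^{\times }$ is the cyclotomic character --- i.e.\ with elements $u$ satisfying $\tau (u) \equiv u ^{\chi (\tau )} \pmod{(K ^{\prime }) ^{\times p}}$ for all $\tau \in \Gamma $. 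For $w \in O _{v}(K ^{\prime }) ^{\times }$ with $\bar w \notin \widehat{K ^{\prime }} ^{p}$, the twisted product $u = \prod _{\sigma \in \Gamma } \sigma (w) ^{m(\sigma )}$ (where $m(\sigma ) \in \mathbb Z$ lifts $\chi (\sigma ) ^{-1}$) is $\chi $-equivariant; the corresponding $\Lambda \subset K(p)$ satisfies $\Lambda K ^{\prime } = K ^{\prime }(u ^{1/p})$, and via (2.3) together with $\gcd(d, p) = 1$, $\widehat \Lambda /\widehat K$ is purely inseparable of degree $p$. Varying the input in each sub-case over the infinite supply of non-$p$-th-power residues gives infinitely many distinct $\Lambda $.

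For (ii), $v(p) \notin pv(K)$ rules out case (ii) of Lemma~2.2 (else $v(p) \in pG \subseteq pv(K)$, absurd), so Lemma~2.2(i) furnishes a totally ramified $\Phi \in I(K(p)/K)$ of degree $p$. Inspection of the construction from \cite{Ch8}, Sect. 6, shows that $\Phi $ can be chosen so that, writing $v(\Phi ) = v(K) + \mathbb Z\gamma $, the class of $p\gamma $ in $v(K)/pv(K)$ equals any prescribed non-trivial class; taking it to be that of $v(p)$ gives $\Lambda ^{\prime } := \Phi $ with $v(p) \in pv(\Lambda ^{\prime })$.

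The main obstacle is the third sub-case of (i): verifying that the Galois-descended $\Lambda $ belongs to $W _{p}(K)$ requires both the non-triviality of the $\chi $-isotypic component of $(K ^{\prime }) ^{\times }/(K ^{\prime }) ^{\times p}$ (so that a $\chi $-equivariant $u$ with $\bar u \notin \widehat{K ^{\prime }} ^{p}$ exists) and the descent of pure inseparability from $\widehat{\Lambda K ^{\prime }}/\widehat{K ^{\prime }}$ to $\widehat \Lambda /\widehat K$. The first point relies on a careful use of the hypothesis $v(p) \in pv(K)$ together with the ramification structure of $K ^{\prime }/K$; the second is a (2.3)-based index comparison exploiting $\gcd(d, p) = 1$, since if $\widehat \Lambda /\widehat K$ were separable of degree $p$, then $\widehat \Lambda \widehat{K ^{\prime }}/\widehat{K ^{\prime }}$ would be separable too, contradicting the purely inseparable structure of $\widehat{\Lambda K ^{\prime }}/\widehat{K ^{\prime }}$.
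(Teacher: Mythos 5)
First, a point of reference: the paper gives no proof of this lemma at all (it is imported from \cite{Ch8}), so there is no internal argument to compare yours with; the proposal has to be judged on its own terms. Your first two sub-cases of (i) are essentially right: the Artin--Schreier extensions $K(\beta )$ with $\beta ^{p}-\beta =t\pi ^{-p}$ and the Kummer extensions $K(a^{1/p})$ do lie in $W_{p}(K)$ for the reasons you give. One caveat on distinctness in the Artin--Schreier case: $\Lambda _{t}=\Lambda _{s}$ if and only if $(t-js)\pi ^{-p}\in \wp (K)$ for some $j\in \mathbb F_{p}^{\times }$, an \emph{additive} condition, so ``distinct classes in $\widehat K^{\times }/\widehat K^{\times p}$'' is not the right invariant ($\bar t$ and $\bar t-1$ can lie in distinct multiplicative classes while their difference is $1\in \widehat K^{p}$); one should instead let $t$ range over $a^{p}c$ with $c\notin \widehat K^{p}$ and $a$ over representatives of $\widehat K^{\times }/\mathbb F_{p}^{\times }$, so that $t-js\equiv (a-ib)^{p}c\notin \widehat K^{p}$.

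The genuine gap is the third sub-case. For a \emph{unit} $w$ with $\bar w\notin \widehat {K'}^{p}$, the twisted product $u=\prod _{\sigma }\sigma (w)^{m(\sigma )}$ does lie in the $\chi $-eigenspace, but its residue is $\bar u=\prod _{\sigma }\bar \sigma (\bar w)^{m(\sigma )}$, and since $\sum _{\sigma }m(\sigma )\equiv \sum _{\sigma }\chi (\sigma )^{-1}\equiv 0\ ({\rm mod}\ p)$ for $d>1$, this collapses into $\widehat {K'}^{\times p}$ whenever the residue action on $\bar w$ is trivial --- in particular whenever $K'=K(\varepsilon )$ is totally ramified over $K$, which does occur under the hypotheses of this sub-case (e.g.\ $K=K_{0}(p^{1/p})$ with $K_{0}$ absolutely unramified with imperfect residue field). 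Then $u=c^{p}u_{1}$ with $v(u_{1}-1)>0$, the residue extension of $K'(u^{1/p})/K'$ is governed by higher-unit data rather than by $\bar u^{1/p}$, and nothing places $\Lambda $ in $W_{p}(K)$. Tellingly, your construction never actually uses $v(p)\in pv(K)$, which must be essential (the exceptional clause of (i) exists precisely because the conclusion can fail when $v(p)\notin pv(K)$). The correct input is a higher unit of the shape $w=1+(\varepsilon -1)^{p}\varpi ^{-p}t$ with $pv(\varpi )=v(p)$ --- this is where $v(p)\in pv(K)$ enters, via $0<v(\varpi )=v(p)/p<v(\varepsilon -1)$ --- and $\bar t\notin \widehat K^{p}$: since $\tau (\varepsilon -1)\equiv \chi (\tau )(\varepsilon -1)$ to first order and $\chi ^{p}=\chi $, the twisted product now has leading term $1+d(\varepsilon -1)^{p}\varpi ^{-p}t$ with $p\dagger d$, i.e.\ $u-1=\pi _{0}^{p}\cdot (\hbox{unit with residue }\bar d\bar t\notin \widehat {K'}^{p})$ for $\pi _{0}=(\varepsilon -1)\varpi ^{-1}$, and a $p$-th root $1+\pi _{0}z$ satisfies $\bar z^{p}=\bar d\bar t$, giving the purely inseparable residue extension. (This is exactly the device the paper itself deploys in Section 3 with the element $\rho (\mu )$.) Your descent from $\widehat {\Lambda K'}/\widehat {K'}$ to $\widehat \Lambda /\widehat K$ is fine. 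Finally, part (ii) is deferred entirely to ``inspection of the construction from \cite{Ch8}'', which is not an argument: any admissible $\Lambda '$ is forced to be totally ramified with $pv(\Lambda ')=pv(K)+\mathbb Z v(p)$, so one must actually exhibit a totally ramified degree-$p$ extension in that specific ramification class (e.g.\ by running the $\rho (\mu )$-construction with $v(\mu )=v(p)$), and that is the entire content of (ii).
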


\begin{lemm} Let $(K, v)$ be a Henselian
$p$-quasilocal field with $v(K) \neq pv(K)$. Then:
\par
{\rm (i)} {\rm Br}$(K) _{p}$ is divisible;
\par
{\rm (ii)} There is at most one extension of $K$ in $K(p)$ of degree
$p$, which is not totally ramified over $K$; when such an extension
exists, Br$(K) _{p} \neq \{0\}$;
\par
{\rm (iii)} $\widehat K$ is perfect, provided that $p = {\rm
char}(\widehat K)$.
\end{lemm}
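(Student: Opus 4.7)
The plan is to establish the three parts in the order (ii), (i), (iii), with the norm-group formulas in (1.2) and the characterization (1.1) as the principal tools and Lemma~3.2 as the main input for (iii).

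For (ii), I would apply (1.2)(i) to two distinct degree-$p$ cyclic extensions $L_1, L_2 \in I(K(p)/K)$: since $L_1 \cap L_2 = K$, this gives $N(L_1/K) \cdot N(L_2/K) = K^{\ast}$. If both $L_i$ are not totally ramified, then (2.3) forces $e(L_i/K) = 1$, and the standard norm computation for an inertial cyclic $p$-extension of a Henselian valued field yields $v(N(L_i/K)) \subseteq pv(K)$; summing, $v(K) \subseteq pv(K)$, contradicting the hypothesis $v(K) \neq pv(K)$. The same computation, applied to a single such $L$, shows $N(L/K) \neq K^{\ast}$; but Br$(K)_p = \{0\}$ would force $d(p) = 0$ and hence $N(L/K) = K^{\ast}$ by (1.2)(ii), yielding the second clause of (ii). The purely defectful case $e(L/K) = f(L/K) = 1$, which can occur only when $p = {\rm char}(\widehat K)$, must be handled separately, presumably by passing to the coarser Henselian valuation $v_{G(K)}$ via Proposition~2.1 and reducing to the inertial picture in the residue field.

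For (i), the strategy is to combine (1.1) and (1.2) to show Br$(K)_p$ is $p$-divisible. One may assume $p \in P(K)$, since otherwise Br$(K)_p = \{0\}$ is trivially divisible; fix a cyclic $\Phi \in I(K(p)/K)$ of degree $p$, which exists by Lemma~2.2. The $p$-quasilocal property together with (1.2)(ii) realises $_{p}{\rm Br}(K)$ as the quotient $K^{\ast}/N(\Phi/K)$ via the cyclic-algebra map $a \mapsto [(\Phi/K, \chi, a)]$. To verify that every $\beta \in {\rm Br}(K)_p$ of exponent $p^n$ lies in $p \cdot {\rm Br}(K)_p$, I would transfer the question to Br$(\Phi)_p$ through the injectivity of Cor$_{\Phi/K}$ supplied by (1.1) and then construct a $p$-th preimage of $\beta$ using the norm-compositum identity in (1.2)(i), inducting on $n$.

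Part (iii) is then handled contrapositively. If $\widehat K$ is imperfect and ${\rm char}(\widehat K) = p$, Lemma~3.2(i) supplies an infinite family $W_p(K) \subset I(K(p)/K)$ of degree-$p$ extensions $\Lambda$ with $[\widehat\Lambda:\widehat K] = p$, hence with $e(\Lambda/K) = 1$; none of these is totally ramified, directly contradicting the uniqueness clause of (ii). The exceptional configuration in Lemma~3.2(i) --- ${\rm char}(K) = 0$, $v(p) \notin pv(K)$, and $K$ containing no primitive $p$-th root of unity --- is defused via Lemma~3.2(ii), which delivers a totally ramified $\Lambda' \in I(K(p)/K)$ of degree $p$ with $v(p) \in pv(\Lambda')$: the residue field of $\Lambda'$ remains the imperfect $\widehat K$, the exception no longer applies at the level of $\Lambda'$, and the argument can be rerun there. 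The principal obstacles I foresee are the purely defectful case in (ii), where the naive value-group computation breaks down, and the need in (iii) to transfer enough $p$-quasilocal-type structural information from $K$ up to $\Lambda'$ without assuming that $\Lambda'$ itself is $p$-quasilocal.
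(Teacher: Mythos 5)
Your treatment of parts (ii) and (iii) follows essentially the same route as the paper: (1.2)(i) applied to two distinct degree-$p$ extensions to force total ramification of all but at most one of them, the norm-value computation to get the nontriviality of ${\rm Br}(K)_{p}$ via the cyclic algebra $(L/K,\sigma,\pi)$, and Lemma~3.2 to derive (iii) by contradiction with (ii). Two of the ``obstacles'' you foresee are in fact non-issues. First, the norm computation in (ii) does not require $L/K$ to be inertial: for any finite extension $L/K$ of Henselian fields one has $v(N_{L/K}(\mu)) = [L\colon K]\,v_{L}(\mu)$, and since $e(L/K)v(L) \subseteq v(K)$ this lies in $([L\colon K]/e(L/K))\,v(K) \subseteq pv(K)$ as soon as $L/K$ is not totally ramified --- the purely defectful and residually inseparable cases are covered by exactly the same one-line calculation, and no detour through $v_{G(K)}$ is needed. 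This is precisely the fact the paper's proof rests on. Second, in (iii) the field $\Lambda'$ supplied by Lemma~3.2(ii) \emph{is} $p$-quasilocal with $v(\Lambda') \neq pv(\Lambda')$: the paper invokes \cite{Ch5}, I, Theorem~4.1~(ii) (finite extensions of a $p$-quasilocal field inside $K(p)$ remain $p$-quasilocal) together with (2.3), so the transfer you worry about is a citation, not a difficulty.

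The genuine gap is part (i). The paper does not prove divisibility of ${\rm Br}(K)_{p}$ from (1.1) and (1.2); for $p>2$ it is quoted as a special case of \cite{Ch5}, I, Theorem~3.1~(ii), a substantial theorem about $p$-quasilocal fields in its own right, and for $p=2$ it is deduced from (1.4) together with the fact that a \emph{formally real} $2$-quasilocal field $E$ has $\mathcal{G}(E(2)/E)$ of order $2$ and ${\rm Br}(E)_{2} \cong \mathbb{Z}/2\mathbb{Z}$ --- a non-divisible group. Your sketch has two problems. The step ``construct a $p$-th preimage of $\beta$ using the norm-compositum identity in (1.2)(i)'' is the entire content of the theorem and is not an argument; note in particular that the surjectivity clause of (1.1) (Cor$_{M/E}$ bijective) is stated only \emph{under the hypothesis} that ${\rm Br}(E)_{p}$ is divisible, so it cannot be used to produce preimages without circularity. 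And nothing in your sketch distinguishes the formally real case at $p=2$, where the conclusion fails for $2$-quasilocal fields in general and where the Henselian hypothesis $v(K)\neq 2v(K)$ must enter (via \cite{Ch5}, I, Lemma~3.5 and Lemma~3.6) to exclude formal reality of $K$. As written, part (i) would need either the external theorem the paper cites or a genuinely new proof, which the proposal does not supply.
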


\begin{proof}
When $p > 2$, the divisibility of Br$(K) _{p}$ is a special case of
\cite{Ch5}, I, Theorem~3.1 (ii), and in case $p = 2$, it is implied
by (1.4) and the fact that $\mathcal{G}(E(2)/E)$ is a group of order
$2$, for every formally real $2$-quasilocal field $E$ \cite{Ch5}, I,
Lemma~3.5. The rest of our proof relies on the fact that if $R$ is a
finite extension of $K$ in $K(p)$, which is not totally ramified,
then $v(\lambda ) \in pv(K)$, for every $\lambda \in N(R/K)$. At the
same time, it follows from Galois theory and the normality of
maximal subgroups of finite $p$-groups (cf. \cite{L}, Ch. I, Sect.
6; Ch. VIII) that if $R \in I(K(p)/K)$ and $[R\colon K] = p$, then
$R/K$ is cyclic. Since, by (1.2) (i), $N(R _{1}/K)N(R _{2}/K) = K
^{\ast }$ whenever $R _{1}$ and $R _{2}$ lie in $I(K(p)/K)$, $R _{1}
\neq R _{2}$ and $[R _{j}\colon K] = p$, $j = 1, 2$, these
observations prove the former part of Lemma 3.3 (ii). Combined with
\cite{P}, Sect. 15.1, Proposition~b, they also imply the latter
assertion of Lemma 3.3 (ii). For the proof of Lemma 3.3 (iii), it
suffices to note that $(M, v _{M})$ satisfies the conditions of
Lemma 3.3 whenever $M \in I(K(p)/K)$ and $[M\colon K] \in \mathbb N$
(see (2.3) and \cite{Ch5}, I, Theorem~4.1 (ii)), which reduces our
concluding assertion to a consequence of Lemma 3.2 and the former
part of Lemma 3.3 (ii).
\end{proof}

\begin{rema} Let $(K, v)$ be a Henselian $p$-quasilocal field with
$v(K) \neq pv(K)$ and char$(\widehat K) \neq p$. In view of (2.3),
the former part of Lemma 3.3 (ii) can be restated by saying that
$r(p)_{\widehat K} \le 1$. Combining (2.3) and (2.4) (i) with Lemma
3.3 (i) and \cite{Ch5}, I, Lemma~3.5, one also obtains that
$\mathcal{G}(\widehat K(p)/\widehat K) \cong \mathbb Z_{p}$ unless
$p \notin P(\widehat K)$. It is therefore clear from Lemma 3.3 (ii)
and \cite{Ch3}, Lemma~1.1 (a), that if $K$ does not contain a
primitive $p$-th root of unity, then $\mathcal{G}(K(p)/K) \cong
\mathcal{G}(\widehat K(p)/\widehat K)$ and finite extensions of $K$
in $K(p)$ are inertial. Hence, by \cite{P}, Sect. 15.1, Proposition
b (and the proof of Lemma 3.3 (ii)), $_{p}{\rm Br}(K) \cong _{p}{\rm
Br}(\widehat K)$ or $_{p}{\rm Br}(K)$ is isomorphic to the direct
sum $_{p}{\rm Br}(\widehat K) \oplus v(K)/pv(K)$, depending on
whether or not $r(p)_{\widehat K} = 0$.
\end{rema}

\medskip
\begin{lemm} In the setting of Lemma 3.3, suppose that {\rm char}$(\widehat
K) = p$ and there exists a field $I \in I(K(p)/K)$, such that
$[I\colon K] = p$ and $I/K$ is not totally ramified. Then $(K, v)$
has the following properties:
\par
{\rm (i)} The group $v(K)/pv(K)$ is of order $p$, provided that
$r(p)_{K} \ge 2$; in particular, this applies to the case where
$v(K)$ is Archimedean;
\par
{\rm (ii)} A finite extension $M$ of $K$ in $K(p)$ is totally
ramified if and only if $M \cap I = K$;
\par
{\rm (iii)} $p \in P(\widehat K)$ if and only if $I/K$ is inertial;
when this holds, $\widehat K(p)/\widehat K$ is a $\mathbb Z
_{p}$-extension.
\end{lemm}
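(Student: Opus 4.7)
The plan is to establish the three parts of Lemma 3.5 in sequence, each anchored on Lemma 3.3(ii)---the uniqueness of the non-totally-ramified degree-$p$ extension of $K$ in $K(p)$.

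For (i), assuming $r(p)_K\ge 2$, I pick $L\in I(K(p)/K)$ with $[L:K]=p$ and $L\ne I$, which is totally ramified by Lemma 3.3(ii). Since $v$ extends uniquely in the Henselian setting, any Galois degree-$p$ extension $E/K$ satisfies $v(N(E/K))=pv(E)$, so (1.2)(i) applied to $(I,L)$ gives $pv(I)+pv(L)=v(K)$. Because $e(I/K)=1$, we have $v(I)=v(K)$, and the identity reduces to $pv(L)+pv(K)=v(K)$. The image of $pv(L)$ in $v(K)/pv(K)$ is cyclic of order dividing $p$, forcing $v(K)/pv(K)$ to be cyclic of order $\le p$; combined with the hypothesis $v(K)\ne pv(K)$, the order is exactly $p$. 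For the Archimedean clause, $v(K)$ has no proper nontrivial isolated subgroup, so alternative (ii) of Lemma 2.2 fails (the only candidate minimal isolated subgroup containing $v(p)$ would be $v(K)$ itself, which cannot be $p$-divisible); Lemma 2.2(i) then supplies a totally ramified $\Phi\in I(K(p)/K)$ of degree $p$ distinct from $I$, yielding $r(p)_K\ge 2$.

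For (ii), the implication $\Rightarrow$ is immediate: if $M/K$ is totally ramified and $I\subseteq M$, then multiplicativity of the ramification index gives $e(M/I)=[M:K]=p[M:I]>[M:I]$, a contradiction. I prove the converse by induction on $[M:K]=p^n$. The base $n=1$ follows from $M\ne I$ and Lemma 3.3(ii). For $n>1$, choose a subfield $M_1$ with $K\subsetneq M_1\subseteq M$ and $[M_1:K]=p$ (available from the $p$-group structure of the Galois closure of $M/K$ in $K(p)$). Then $M_1\cap I\subseteq M\cap I=K$ and the base case makes $M_1/K$ totally ramified. The crux is that $(M_1,v)$ again fits the hypotheses of the lemma, with $IM_1$ in place of $I$: $M_1$ is Henselian and $p$-quasilocal (by \cite{Ch5}, I, Theorem~4.1(ii)), $\widehat{M_1}=\widehat K$ has characteristic $p$, and $pv(M_1)=v(K)\ne pv(K)$ gives $v(M_1)\ne pv(M_1)$; moreover $[IM_1:M_1]=p$ (since $I\not\subseteq M_1$), and $IM_1/M_1$ is not totally ramified (else multiplicativity and total ramification of $M_1/K$ would force $IM_1/K$ totally ramified, contradicting $e(I/K)=1$). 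Finally $M\cap IM_1=M_1$ (else $I\subseteq M$), so the inductive hypothesis produces $M/M_1$ totally ramified, and multiplicativity of $e$ concludes.

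For (iii), the biconditional uses (2.4)(i): any inertial lift of a degree-$p$ extension of $\widehat K$ in $\widehat K(p)$ is a non-totally-ramified degree-$p$ extension of $K$ in $K(p)$, hence coincides with $I$ by Lemma 3.3(ii); the converse is immediate from $[\widehat I:\widehat K]=p$ when $I$ is inertial. For the $\mathbb Z_p$ claim, (2.4)(ii) puts degree-$p$ extensions of $\widehat K$ in $\widehat K(p)$ in bijection with inertial degree-$p$ extensions of $K$ in $K(p)$, of which there is at most one by Lemma 3.3(ii); thus $\mathcal G(\widehat K(p)/\widehat K)$ has rank $\le 1$, and since $p\in P(\widehat K)$, rank exactly $1$. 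Artin-Schreier-Witt theory in $\mathrm{char}(\widehat K)=p$ then rules out the finite rank-$1$ case, giving $\mathcal G(\widehat K(p)/\widehat K)\cong\mathbb Z_p$. The main obstacle I expect is the descent of the full hypothesis list to $(M_1,IM_1)$ in the inductive step of (ii); once this is in hand, the entire argument is a clean combination of the norm-index identity (1.2)(i) with multiplicativity of ramification and the inertial-lift correspondence (2.4).
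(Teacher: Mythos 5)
Your proposal is correct and follows essentially the same route as the paper: part (i) via the norm-group identity $N(I/K)N(L/K)=K^{\ast}$ from (1.2)(i) (which is exactly the computation underlying Corollary 2.7, the paper's cited source), part (ii) by the same induction on $\log_{p}[M\colon K]$ using that $M_{1}$ is again $p$-quasilocal and $IM_{1}/M_{1}$ is not totally ramified, and part (iii) via the inertial-lift correspondence (2.4) together with Lemma 3.3(ii) and the characteristic-$p$ case of Whaples' theorem (Artin--Schreier--Witt). You have in fact supplied the details that the paper only sketches, and the argument is sound.
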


\begin{proof}
Statement (2.4) (i) and Lemma 3.3 (ii) imply the former assertion of
Lemma 3.5 (iii) and the inequality $r(p)_{\widehat K} \le 1$. As
$\widehat K$ is a nonreal field, this in turn enables one to deduce
the latter part of Lemma 3.3 (iii) from Galois theory and \cite{Wh},
Theorem~2. Note further that every $L \in I(K(p)/K)$, $L \neq K$,
contains as a subfield a cyclic extension $L _{0}$ of $K$ of degree
$p$. This well-known fact is implied by Galois theory and the
subnormality of proper subgroups of finite $p$-groups. Let now
$r(p)_{K} \ge 2$ or, equivalently, there is a field $T \in K(p)/K)$,
such that $[T\colon K] = p$ and $T \neq I$. By (1.2) (i), then
$N(T/K)N(I/K) = K ^{\ast }$, which implies that $T/K$ is totally
ramified. At the same time, it becomes clear from (2.3) that $IT/T$
is not totally ramified. Since, by \cite{Ch5}, I, Theorem~4.1 (ii),
$T$ is $p$-quasilocal, the noted properties of $T$ and $L$ make it
easy to prove Lemma 3.5 (ii), arguing by induction on $n = {\rm
log}_{p} ([M\colon K])$. The former part of Lemma 3.5 (i) follows
from Corollary 2.7 and the assumption on $I/K$, and for the proof of
the latter one, it suffices to observe that the existence of $T$ in
case $v(E) \le \mathbb R$ is guaranteed by Lemma 2.2 (and by
Proposition 2.6 and Remark 2.8).
\end{proof}

\par
\medskip
\begin{lemm} Let $(K, v)$ be a Henselian field with {\rm
char}$(\widehat K) = p > 0$, $v(K) = pv(K)$ and {\rm Br}$(K) _{p}
\neq \{0\}$. Suppose also that $K$ is $p$-quasilocal and $p \in
P(\widehat K)$. Then finite extensions of $K$ in $K(p)$ are
defectless, {\rm Br}$(K) _{p} \subseteq {\rm IBr}_{v}(K)$, {\rm
Br}$(K) _{p} \cong {\rm Br}(\widehat K) _{p}$ and $[\widehat K\colon
\widehat K ^{p}] = p$.
\end{lemm}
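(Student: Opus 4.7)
The plan is to combine the $p$-quasilocal hypothesis with the inertial lift theorem (2.4), the $p$-divisibility of $v(K)$, and a norm-residue comparison to deduce all four assertions. First, since $v(K) = pv(K)$ and $v(L)/v(K)$ is a finite $p$-torsion quotient of a torsion-free totally ordered group, one must have $v(L) = v(K)$ for every finite $L \in I(K(p)/K)$, so $e(L/K) = 1$ and by (2.3) the only possible defect of such $L/K$ is of ``immediate'' type ($\widehat L = \widehat K$). Using $p \in P(\widehat K)$, I would pick a cyclic separable extension $\widehat N/\widehat K$ of degree $p$ and form its inertial lift $N/K$ via (2.4), a distinguished cyclic inertial $N \in I(K(p)/K)$ of degree $p$.

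The central step is to rule out immediate extensions of $K$ of degree $p$ in $K(p)$. Suppose $L/K$ is such an extension; then $L \neq N$, and (1.2)(i) yields $N(L/K)\cdot N(N/K) = K^{\ast}$. Because $\mathcal{G}(L/K)$ acts trivially on $\widehat L = \widehat K$, we have $\widehat{N_{L/K}(\mu)} = \widehat\mu^p$ for every $\mu \in O_v(L)^{\ast}$; and because $v(K) = pv(K)$, every $u \in O_v(K)^{\ast}$ can be decomposed as $u = \ell n$ with $\ell \in N(L/K) \cap O_v(K)^{\ast}$ and $n \in N(N/K) \cap O_v(K)^{\ast}$ (after adjusting by $p$-th powers of elements of $K^{\ast}$, which lie in both norm groups because $c^p = N_{L/K}(c) = N_{N/K}(c)$ for $c \in K^{\ast}$). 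Reducing and using $(\widehat K^{\ast})^p \subseteq N_{\widehat N/\widehat K}(\widehat N^{\ast})$ (since $N_{\widehat N/\widehat K}(a) = a^p$ for $a \in \widehat K^{\ast}$) forces $\widehat K^{\ast} = N_{\widehat N/\widehat K}(\widehat N^{\ast})$. This contradicts (1.2)(ii) applied to the $p$-quasilocal field $\widehat K$ (inherited through the inertial correspondence of (2.4)(ii)--(iii)), provided $d(p)_{\widehat K} \ge 1$; this last input is supplied by the analysis of (b) below. Iterating the argument along the chain of finite extensions inside $K(p)$, each $p$-quasilocal by \cite{Ch5}, I, Theorem~4.1(ii) and Henselian with $p$-divisible value group, then establishes (a).

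For (b), (c), (d): any $[D] \in \mathrm{Br}(K)_p$ is split by a cyclic $p$-extension $L \subseteq K(p)$ via $p$-quasilocality, and by (a), $L/K$ is defectless. If $\widehat K$ were perfect, then $\widehat L/\widehat K$ would be separable, $L/K$ inertial, $D$ inertial, and $[D] \in \mathrm{IBr}(K)_p \cong \mathrm{Br}(\widehat K)_p = 0$, contradicting $\mathrm{Br}(K)_p \ne 0$; hence $\widehat K$ is imperfect, giving the lower bound $[\widehat K\colon\widehat K^p] \ge p$, and the transferred $p$-quasilocality of $\widehat K$ combined with the dimension count of (1.2)(ii) caps $[\widehat K\colon\widehat K^p] \le p$, yielding (d). With $[\widehat K\colon\widehat K^p] = p$ in hand, a further residue-theoretic analysis shows that every defectless degree-$p$ extension in $K(p)$ is in fact inertial (since the only purely inseparable residue extension permitted has degree $p$, and its occurrence is excluded by a norm-group comparison parallel to the one in paragraph~2 applied to the fiercely-ramified case); consequently every $[D] \in \mathrm{Br}(K)_p$ is inertial, proving (b), and (c) follows from the canonical isomorphism $\mathrm{IBr}(K) \cong \mathrm{Br}(\widehat K)$ of (2.4)(iii). \emph{The main obstacle} is the interlocking nature of (a), (b), and (d): the $p$-divisibility $v(K) = pv(K)$ removes the valuation-side leverage that underlies Theorem~3.1, so the entire argument is forced onto $\widehat K$ via norm groups and $p$-th power images, and the fiercely-ramified defectless case (residue extension purely inseparable of degree $p$, which genuinely can occur here) must be handled in parallel with the immediate case.
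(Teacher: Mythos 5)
Your overall strategy (work residue\-/side with norm groups, first excluding immediate degree-$p$ extensions, then deducing inertiality of algebras from inertiality of their splitting fields) differs from the paper's, which instead presents every exponent-$p$ algebra explicitly as $(I_{p}/K,\sigma ,a)$ with $v(a)=0$ and reads off everything from that single inertial cyclic algebra. As written, your argument has two genuine gaps. First, the central step is circular, and you say so yourself: to contradict $\widehat K{}^{\ast }=N_{\widehat N/\widehat K}(\widehat N{}^{\ast })$ via (1.2)(ii) you need $d(p)_{\widehat K}\ge 1$, i.e. ${\rm Br}(\widehat K)_{p}\neq \{0\}$, but that is only obtained from parts (b)--(c), which in turn rest on (a). The circle can be broken, but only by running the contradiction at the level of $K$ rather than $\widehat K$: from $\widehat K{}^{\ast }=N_{\widehat N/\widehat K}(\widehat N{}^{\ast })$ and Hensel's lemma one gets $O_{v}(K)^{\ast }\subseteq N(N/K)$, and since $v(K)=pv(K)$ gives $K^{\ast }=O_{v}(K)^{\ast }K^{\ast p}$ and $K^{\ast p}\subseteq N(N/K)$, one concludes $N(N/K)=K^{\ast }$, which contradicts (1.2)(ii) applied to $K$, where ${\rm Br}(K)_{p}\neq \{0\}$ is a \emph{hypothesis}. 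This is essentially the paper's opening move (it shows $\nabla _{0}(K)\subseteq N(I_{p}/K)$ and derives the same contradiction when $\widehat K$ is perfect); you need to make that substitution explicit rather than defer to a later, dependent step.

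Second, the claim that ``every defectless degree-$p$ extension in $K(p)$ is in fact inertial,'' with the fiercely ramified case ``excluded by a norm-group comparison parallel to the one in paragraph~2,'' is false, and the parallel argument demonstrably fails. If $L\in I(K(p)/K)$ has $[L\colon K]=[\widehat L\colon \widehat K]=p$ with $\widehat L/\widehat K$ purely inseparable, then every $\sigma \in \mathcal{G}(L/K)$ induces the identity on $\widehat L$, so residues of unit norms from $L$ form $\widehat L{}^{\ast p}$; but once $[\widehat K\colon \widehat K{}^{p}]=p$ one has $\widehat L{}^{p}=\widehat K{}^{p}(\widehat b)=\widehat K$, i.e. $\widehat L{}^{\ast p}=\widehat K{}^{\ast }$, so no restriction on residues arises and no contradiction follows. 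Indeed such extensions genuinely occur (this is exactly the set $W_{p}(K)$ of Lemma~3.2, and the paper's own proof of $[\widehat K\colon \widehat K{}^{p}]=p$ \emph{relies} on one). Consequently your derivation of ${\rm Br}(K)_{p}\subseteq {\rm IBr}(K)$ collapses: a division algebra split by a fiercely ramified field need not be detected this way, and the correct route is the paper's, namely that $\Delta \cong (I_{p}/K,\sigma ,a)$ for the \emph{inertial} $I_{p}$ with $a$ adjustable modulo $N(I_{p}/K)$ to a unit, followed by the divisible-direct-summand argument to pass from $_{p}{\rm Br}(K)$ to all of ${\rm Br}(K)_{p}$. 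Relatedly, your upper bound $[\widehat K\colon \widehat K{}^{p}]\le p$ ``by the dimension count of (1.2)(ii)'' is unsubstantiated: (1.2)(ii) measures $E^{\ast }/N(M/E)$, not the imperfection degree. The paper gets the bound by exhibiting, when $[\widehat K\colon \widehat K{}^{p}]\ge p^{2}$, a cyclic algebra $(\Lambda /K,\varphi ,b)$ whose residue algebra is a purely inseparable field of degree $p^{2}$, into which the inertial $I_{p}$ cannot embed, contradicting $p$-quasilocality; some argument of that kind is needed and is missing from your proposal.
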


\begin{proof}
Statement (2.4) (i) and the assumption that $p \in P(\widehat K)$
imply the existence of an inertial extension $I _{p}$ of $K$ in
$K(p)$ of degree $p$. This ensures that $\nabla _{0}(K) \subseteq
N(I _{p}/K)$. Note further that if $\widehat K$ is perfect, then $K
^{\ast } = \nabla _{0}(K).K ^{\ast p}$, so the noted inclusion
requires that $N(I _{p}/K) = K ^{\ast }$. As Br$(K) _{p} \neq
\{0\}$, this contradicts (1.2) (i) and thereby proves that $\widehat
K \neq \widehat K ^{p}$. We show that $[\widehat K\colon \widehat K
^{p}] = p$. By Lemma 3.2, there is a field $\Lambda \in I(K(p)/K)$,
such that $[\Lambda \colon K] = [\widehat L\colon \widehat K] = p$
and $\widehat \Lambda $ is purely inseparable over $\widehat K$.
Let $\varphi $ be a generator of $\mathcal{G}(\Lambda /K)$. It is
easily seen that if $[\widehat K\colon \widehat K ^{p}] \ge p
^{2}$, then $O _{v}(K)$ contains an element $b$, such that $\hat b
\notin \widehat \Lambda ^{p}$. Therefore, the cyclic $K$-algebra $D
_{b} = (\Lambda /K, \varphi , b)$ lies in $d(K)$ and $\widehat D
_{b}/\widehat K$ is a purely inseparable field extension of degree
$p ^{2}$. This leads to the conclusion that $I _{p}$ does not embed
in $D _{b}$ as a $K$-subalgebra. Our conclusion, contradicts the
assumption that $K$ is $p$-quasilocal, which proves that $[\widehat
K\colon \widehat K ^{p}] = p$.
\par
We show that Br$(K) _{p} \subseteq {\rm IBr}(K)$ and finite
extensions of $K$ in $K(p)$ are defectless. Fix a generator $\sigma
$ of $\mathcal{G}(I _{p}/K)$ and an algebra $\Delta \in d(K)$ of
exponent $p$. As $K$ is $p$-quasilocal and, by \cite{Ch5}, I,
Theorem~3.1, ind$(\Delta ) = p$, it is easily seen that $\Delta $ is
isomorphic to the $K$-algebra $(I _{p}/K, \sigma , a)$, for some $a
\in K ^{\ast } \setminus N(I _{p}/K)$. Moreover, it follows from the
equality $v(K) = pv(K)$ that $a$ can be chosen so that $v(a) = 0$.
Hence, by the Henselian property of $v$ and the fact that $I _{p}/K$
is inertial, $\Delta /K$ is inertial too, which proves that
$_{p}{\rm Br}(K) \subseteq {\rm IBr}(K)$. Applying (2.4) (iii) and
Witt's theorem (see \cite{Dr1}, Sect. 15, and \cite{JW},
Theorem~2.8), one obtains consecutively that Br$(K) _{p} \cap {\rm
IBr}(K) \cong {\rm Br}(\widehat K) _{p}$ and Br$(K) _{p} \cap {\rm
IBr}(K)$ is a divisible subgroup of Br$(K) _{p}$. Therefore, by
\cite{F}, Theorem~24.5, Br$(K) _{p} \cap {\rm IBr}(K)$ is a direct
summand in Br$(K) _{p}$, so the inclusion $_{p}{\rm Br}(K) \subset
{\rm IBr}(K)$ implies that Br$(K) _{p} \subseteq {\rm IBr}(K)$. This
indicates that the maximal subfields of $(I _{p}/K, \sigma , a)$ are
defectless over $K$. As $K$ is $p$-quasilocal, the obtained
result proves that every $L \in I(K(p)/K)$ with $[L\colon K] = p$
is defectless over $K$. Since finite extensions of $K$ in
$K(p)$ are $p$-quasilocal, by \cite{Ch5}, I, Theorem~4.1, this
enables one to deduce from Galois theory and the normality of
maximal subgroups of finite $p$-groups that finite extensions of $K$
in $K(p)$ are defectless.
\end{proof}

\medskip
\begin{lemm} Under the hypotheses of Lemma 3.5, suppose that
$r(p)_{K} \ge 2$ and there exists a $p$-indivisible group $H \in
{\rm Is}_{v}(K)$. Then:
\par
{\rm (i)} $K _{H}$ is $p$-quasilocal and $r(p)_{K _{H}} \ge 2$.
\par
{\rm (ii)} $I _{H}/K _{H}$ is immediate and $I/K$ is inertial
relative to $\hat v _{H}$ and $v _{H}$, respectively.
\end{lemm}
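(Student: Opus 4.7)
The plan is to prove (ii) first and then derive (i) by inertial lifting with respect to $v_H$. By Proposition 2.1, both $v_H$ on $K$ and $\hat v_H$ on $K_H$ are Henselian. Since Lemma 3.7 is invoked in the proof of Theorem 3.1, the ambient hypothesis is that $I/K$ is immediate relative to $v$, so $v_I(I) = v(K)$ and $\widehat I = \widehat K$; in particular, $H$ remains isolated in $v_I(I)$ and the coarsened valuation $(v_I)_H$ on $I$ is the unique extension of $v_H$.

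For (ii), immediacy of $I/K$ gives $(v_I)_H(I) = v(K)/H = v_H(K)$, so the $v_H$-ramification index equals $1$ and (2.3) forces $[I_H : K_H] \in \{1, p\}$. In the inertial case $[I_H : K_H] = p$, a direct check shows that the residuation $\hat{(v_I)_H}$ on $I_H$ has value group $H = \hat v_H(K_H)$ and residue field $\widehat I = \widehat K = \widehat{K_H}$, so $I_H/K_H$ is immediate relative to $\hat v_H$, yielding both claims of (ii). The whole task therefore reduces to excluding the defect case $[I_H : K_H] = 1$. I would proceed by contradiction: assuming $I_H = K_H$, apply Lemma~2.2 to $(K_H, \hat v_H)$ (whose value group $H$ is $p$-indivisible) to obtain, in the generic case, a degree-$p$ totally ramified (relative to $\hat v_H$) extension $\Phi/K_H$ in $K_H(p)$. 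Its inertial-relative-to-$v_H$ lift $\widetilde\Phi/K$ via (2.4) is a cyclic degree-$p$ extension of $K$ in $K(p)$; tracing ramification through the $H$-coarsening shows $\widetilde\Phi/K$ is totally ramified relative to $v$, so by Lemma~3.5~(ii), $\widetilde\Phi \neq I$. Then (1.2)(i) gives $N(I/K) \cdot N(\widetilde\Phi/K) = K^{\ast}$, and a $v_H$-valuation analysis of the two norm groups, using the defect hypothesis together with Lemma~3.5~(i) (which supplies $v(K)/pv(K) \cong \mathbb{Z}/p$) and $H \neq pH$, should produce the required contradiction. The exceptional subcase of Lemma~2.2~(ii) is to be handled by a parallel argument in the absence of a totally ramified degree-$p$ extension of $K_H$.

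For (i), the inertial-lift formalism (2.4) applied to the Henselian valuation $v_H$ puts cyclic degree-$p$ separable extensions of $K_H$ in canonical bijection with inertial-relative-to-$v_H$ degree-$p$ cyclic extensions of $K$ in $K(p)$, and gives a canonical isomorphism of the $v_H$-inertial part of $\mathrm{Br}(K)$ with $\mathrm{Br}(K_H)$ (and analogously over finite extensions). To conclude that $K_H$ is $p$-quasilocal I invoke the corestriction criterion (1.1): for each finite $M \subseteq K_H(p)$, its inertial-rel-$v_H$ lift $\widetilde M \subseteq K(p)$ satisfies $\widetilde M_H = M$, and under the Brauer identification $\mathrm{Cor}_{M/K_H}$ on $\mathrm{Br}(M)_p$ corresponds to the restriction of $\mathrm{Cor}_{\widetilde M/K}$ to the $v_H$-inertial Brauer part, which is injective because $K$ is $p$-quasilocal. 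The inequality $r(p)_{K_H} \geq 2$ follows from exhibiting two distinct degree-$p$ cyclic extensions of $K_H$: namely $I_H/K_H$ (immediate relative to $\hat v_H$, from (ii)) and the extension $\Phi/K_H$ furnished by Lemma~2.2.

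The principal obstacle is excluding the $v_H$-defect case in (ii); this is where the norm equation from (1.2)(i) has to be reconciled with the structural constraints $v(K)/pv(K) \cong \mathbb{Z}/p$ and $H \neq pH$. The delicate point is to show that the defect hypothesis $I_H = K_H$ sufficiently refines $v_H(N(I/K))$ to prevent the product $N(I/K) \cdot N(\widetilde\Phi/K)$ from exhausting $K^{\ast}$, contradicting (1.2)(i).
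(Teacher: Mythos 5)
Your reduction of (ii) to excluding the case $[I_H:K_H]=1$ is the right framing, and your auxiliary extension $\widetilde\Phi$ (inertial relative to $v_H$, totally ramified relative to $v$) is essentially the same object as the extension $J$ that the paper builds from a root of $X^p-X-\pi^{-1}$ with $v(\pi)\in H\setminus pH$. But the step you yourself flag as "the principal obstacle" is a genuine gap, and the mechanism you propose for it cannot work as stated. Note first that Lemma 3.5~(i) together with $H\neq pH$ forces $v_H(K)=pv_H(K)$. Consequently $v_H(N(I/K))=pv_H(I)=pv_H(K)=v_H(K)$ and likewise $v_H(N(\widetilde\Phi/K))=v_H(K)$; at the level of $v$ one gets $v(N(I/K))=pv(K)$ and $v(N(\widetilde\Phi/K))=pv(\widetilde\Phi)=v(K)$. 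So the identity $N(I/K)\,N(\widetilde\Phi/K)=K^{\ast}$ from (1.2)(i) imposes no constraint whatsoever at the level of value groups: a defect of $I/K$ relative to $v_H$ is by its nature invisible to value groups and residue fields, so no "valuation analysis" of the two norm groups can detect it. The contradiction has to be extracted from the unit filtration, and that is exactly where the paper does real work.

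What is actually needed is to \emph{exhibit} an element $r\in O_v(IJ)$ with $v(\psi'(r)-r)\in H'$ (equivalently, a nontrivial separable residue extension for $v_H$); this is statement (3.1). The paper obtains it by taking $\theta=\xi^{-1}$ for a root $\xi$ of the Artin--Schreier polynomial, computing $v_J(\zeta(\theta)\theta^{-1}-1)=p^{-1}v(\pi)\notin pv(J)$, invoking the $\mathbb Z[\mathcal G]$-module structure of norm groups of $p$-quasilocal fields (\cite{Ch6}, Lemma~4.2) to solve $N_J^{IJ}(\theta')=\zeta(\theta)\theta^{-1}$, and then showing that if all conjugates of $\theta'$ were closer than $p^{-1}v(\pi)$ one would get $(\theta'-1)^p\approx\zeta(\theta)\theta^{-1}-1$ and hence $v(\pi)\in pv(K)$, a contradiction. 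None of this (nor any substitute for it) appears in your proposal. Two smaller points: in the subcase $v(p)\in H$ the statement is immediate, since char$(K_H)=0$ and $v_H(K)=pv_H(K)$ make every finite extension of $K$ in $K(p)$ inertial relative to $v_H$ by (2.3) --- you do not use this; and in the case $[I_H:K_H]=p$ you must also rule out that $I_H/K_H$ is purely inseparable (recall $[K_H:K_H^p]=p$ by Lemma 3.6), which is again precisely what (3.1) delivers. Your treatment of (i) via inertial lifts and the corestriction criterion (1.1) is plausible and close in spirit to the paper's use of Lemma 3.6, but it too leans on (ii) for the extension $I_H/K_H$, so the lemma does not go through without filling the main gap.
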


\begin{proof}
Lemma 3.5 (ii) and the inequality $r(p)_{K} \ge 2$ ensure that
$v(K)/pv(K)$ is of order $p$. As $H \neq pH$, this implies that $v
_{H}(K) = pv _{H}(K)$. It is therefore clear from (2.3) that if
$v(p) \in H$, i.e. char$(K _{H}) = 0$, then finite extensions of $K$
in $K(p)$ are inertial relative to $v _{H}$, which yields
$\mathcal{G}(K(p)/K) \cong \mathcal{G}(K _{H}(p)/K _{H})$ (cf.
\cite{JW}, page 135), whence $r(p)_{K} = r(p)_{K _{H}}$. Suppose now
that $v(p) \notin pH$, fix an element $\pi \in M _{v}(K)$ so that
$v(\pi ) \in H \setminus pH$, and denote by $J$ the root field in $K
_{\rm sep}$ of the polynomial
$f(X) = X ^{p} - X - \pi ^{-1}$ over $K$. It is easy to see that $J
\in I(K(p)/K)$, $[J\colon K] = p$, and $J/K$ is inertial relative to
$v _{H}$ and totally ramified relative to $v$. In particular, $J
\neq I$ and $p \in P(K _{H})$, so it follows from Lemma 3.6, applied
to $K$, $v _{H}$ and $p$, that $K _{H}$ is $p$-quasilocal and Br$(K)
_{p} \cong {\rm Br}(K _{H}) _{p} \neq \{0\}$. In view of \cite{J},
Proposition~4.4.8, this indicates that $r(p)_{K _{H}} = \infty $. We
show that $I/K$ is inertial relative to $v _{H}$. This has already
been established in the case where $v(p) \in H$, so we assume here
that $v(p) \notin H$. It is clearly sufficient to prove that $IJ/J$
is inertial relative to the prolongation $v ^{\prime }_{H}$ of $v
_{H}$ on $J$. This implies that each generator $\psi $ of
$\mathcal{G}(I/K)$ is uniquely extendable to a generator $\psi
^{\prime }$ of $\mathcal{G}(IJ/J)$. We show that $IJ/J$ is inertial
relative to $v ^{\prime }_{H}$ by proving the following statement:
\par
\medskip
(3.1) There exists $r \in O _{v}(I)$, such that $v _{I}(r - \psi
^{\prime } (r)) \le p ^{-1}v(\pi )$.
\par
\medskip\noindent
Fix a root $\xi $ of $f$ in $J$, put $\theta = \xi ^{-1}$, and
denote by $H ^{\prime }$ the sum of $H$ and the cyclic group
$\langle v _{J}(\xi )\rangle $. It is easily verified that $H
^{\prime } \in {\rm Is}_{v _{J}}(J)$, $v ^{\prime }_{H} = v _{J,H'}$
and $v _{H}(\eta _{H}(\pi )) = pv ^{\prime }_{H}(\eta _{H'}(\theta
))$. For convenience, we put $\kappa _{H} = \eta _{H}(\kappa )$ and
$\kappa ^{\prime }_{H'} = \eta _{H'}(\kappa ^{\prime })$, for each
$\kappa \in O _{v _{H}}(K)$, $\kappa ^{\prime } \in O _{v'_{H}}(J)$.
Observing that char$(K _{H}) = p$ and $\theta _{H'} = \pi _{H}\prod
_{u=1} ^{p-1} (\xi _{H'} + u)$, one obtains by direct calculations
that $\hat v ^{\prime }_{H}(\theta _{H'}) = p ^{-1}\hat v _{H}(\pi
_{H})$ and $\hat v ^{\prime }_{H}(\theta _{H'} - \tilde \zeta
(\theta _{H'})) = (2p ^{-1})\hat v _{H}(\pi _{H})$, for each
generator $\tilde \zeta $ of $\mathcal{G}(J _{H'}/K _{H})$. Thus it
turns out that $v _{J}(\theta ) = p ^{-1}v(\pi )$, $v _{J}(\theta -
\zeta (\theta )) = (2p ^{-1})v(\pi )$ and $v _{J}(1 - \zeta (\theta
)\theta ^{-1}) = p ^{-1}v(\pi )$, provided that $\zeta $ generates
$\mathcal{G}(J/K)$. Hence, by (2.3) and the choice of $\pi $, $v
_{J}(\zeta (\theta )\theta ^{-1} - 1) \notin pv(J)$. Note also that
the $p$-quasilocal property of $K$ is preserved by $J$ \cite{Ch5},
I, Theorem~4.1, which ensures that $\zeta (\lambda )\lambda ^{-1}
\in N(IJ/J)$, for each $\lambda \in (IJ) ^{\ast }$ (see \cite{Ch6},
Lemma~4.2). Take an element $\theta ^{\prime } \in IJ$ of norm $N
_{J} ^{IJ} (\theta ^{\prime }) = \zeta (\theta )\theta ^{-1}$ and
put $\lambda ^{\prime } = \theta ^{\prime } - 1$. We show that $v
_{IJ}(\theta ^{\prime } - \psi ^{\prime }(\theta ^{\prime })) \le p
^{-1}v(\pi )$. It follows from the Henselian property of $v$ and the
primality of $p$ that $v _{IJ}(\theta ^{\prime } - \psi ^{\prime
}(\theta ^{\prime })) = v _{IJ}(\psi ^{\prime u} (\theta ^{\prime })
- \psi ^{\prime u'}(\theta ^{\prime }))$, for $\psi ^{\prime u} \neq
\psi ^{\prime u'}$. Therefore, the equality $N _{J} ^{IJ}(\theta
^{\prime }) = \zeta (\theta )\theta ^{-1}$ implies that if $v
_{IJ}(\theta ^{\prime } - \psi ^{\prime } (\theta ^{\prime }))
> p ^{-1}v(\pi )$, then $v _{IJ}(\lambda ^{\prime p}) = v _{J}(\zeta
(\theta )\theta ^{-1} - 1)$. Since $IJ/J$ is immediate relative to
$v _{J}$, our conclusion requires that $p ^{-1}v(\pi ) \in pv(J)$
and $v(\pi ) \in pv(K)$, a contradiction proving (3.1) (and the fact
that $I/K$ is inertial relative to $v _{H}$).
\par
It remains to be seen that $I _{H}/K _{H}$ is immediate relative to
$\hat v _{H}$. Observing that $v(I) = v(K)$ and $v(K)/H$ is
torsion-free, one obtains that $\hat v _{H}(I _{H}) = \hat v _{H}(K
_{H})$. Since, by Lemma 3.3 (iii), $\widehat K$ is perfect, and by
(2.1) (i), it is isomorphic to the residue field of $(K _{H}, \hat v
_{H})$, this implies that $I _{H}/K _{H}$ is immediate or inertial.
Suppose for a moment that $I _{H}/K _{H}$ is inertial. Then $I
_{H}/K _{H}$ possesses a primitive element $\tilde \alpha \in O
_{\hat v _{H}}(I _{H})$, such that $\hat v _{H}(d(\tilde g)) = 0$,
where $\tilde g$ is the minimal (monic) polynomial of $\tilde \alpha
$ over $K _{H}$, and $d(\tilde g)$ is the discriminant of $\tilde
g$. The choice of $\tilde \alpha $ guarantees that $\tilde g(X) \in
O _{\hat v _{H}}(K _{H}) [X]$, whence $\tilde g$ is a reduction
modulo $M _{v}(K)$ of a monic polynomial $g(X) \in O _{v}(K) [X]$
(see (2.1)). Denote by $d(g)$ the discriminant of $g$. It is easily
obtained that $v(d(g)) = 0$ and the residue class of $d(g)$ in $K
_{H}$ equals $d(\tilde g)$. Observe also that, for each root $\tilde
\beta \in O _{\hat v _{H}}(I _{H})$ of $\tilde g$, there exists a
root $\beta \in O _{v}(I)$ of $g$, such that $\hat \beta = \tilde
\beta $. The obtained result leads to the conclusion that $I/K$ is
inertial. This contradicts our assumptions and thereby proves that
$I _{H}/K _{H}$ is immediate relative to $\hat v _{H}$, as claimed.
\end{proof}

\medskip
\begin{rema}
The assertion of Lemma 3.7 (ii) can be restated by saying that
$v(\sigma (\lambda ) - \lambda ) > 0$ whenever $\lambda \in O
_{v}(I)$, and there exists $\alpha _{H} \in O _{v}(I)$, $v(\sigma
(\alpha _{H}) - \alpha _{H}) \in H$, where $\sigma $ is a generator
of $\mathcal{G}(I/K)$.
\end{rema}

\medskip
Our objective now is to prove Theorem 3.1, under the extra
hypothesis that Is$_{v}(K) \neq \{0\}$ and $H \neq pH$, for each $H
\in {\rm Is}_{v}(K)$, $H \neq \{0\}$. Suppose first that Is$_{v}(K)$
does not contain a minimal element (with respect to inclusion), and
fix an arbitrary element $\beta \in \nabla _{0}(K)$. Then $v(\beta -
1) \notin H _{\beta }$, for some $H _{\beta } \in {\rm Is}_{v}(K)$,
so it follows from (2.2), (3.1) and Lemma 3.7 that $\beta \subset
N(I/K)$. It remains to be seen that $\nabla _{0}(K) \subseteq
N(I/K)$, provided that Is$_{v} ^{\prime }(K)$ contains a minimal
element $\Gamma \neq \{0\}$. Applying Lemma 3.7, one sees that it
suffices to consider the special case of $v(K) = \Gamma $. The
minimality of $\Gamma $ indicates that it is Archimedean, so the
inclusion $I _{0}(K) \subset N(I/K)$ can be proved by showing that
$\nabla _{\delta }(K) \subseteq N(I/K)$, for an arbitrary $\delta
\in \Gamma $, $\delta > 0$. Our main step in this direction is
contained in the following lemma.
\par
\medskip
\begin{lemm}
Assume that $(K, v)$, $p$ and $I$ satisfy the conditions of Lemma
3.5, $v(K)$ is Archimedean, and $L \in I(K(p)/K)$ is a field, such
that $[L\colon K] = p$ and $L \neq I$. Suppose further that $v(L)
\setminus pv(L)$ contains an element $\gamma > 0$ satisfying the
conditions $\gamma = v _{L} (\lambda ) = v _{L}(\tau (\lambda
)\lambda ^{-1} - 1) < p ^{-1}v(p)$, for some $\lambda \in O
_{v}(L)$, where $\tau $ is a generator of $\mathcal{G}(IL/L)$. Then
$\nabla _{\gamma '}(L) \subseteq N(IL/L)$ and $\nabla _{\gamma
''}(K) \subseteq N(IL/K)$, where $\gamma ^{\prime } = (2p - 2)\gamma
$ and $\gamma ^{\prime \prime } = [(p ^{2} - 1)(4p - 2)]\gamma $.
\end{lemm}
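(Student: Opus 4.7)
The strategy is to build, out of the element $\lambda$ supplied by the hypothesis, an explicit family of norms from $IL$ to $L$ and from $L$ to $K$, and then to cover the filtration subgroups $\nabla_{\gamma'}(L)$ and $\nabla_{\gamma''}(K)$ by successive approximation. (For the condition $v_L(\tau(\lambda)\lambda^{-1}-1) = \gamma$ to be nonvacuous, $\lambda$ must be read in $O_v(IL)$ rather than $O_v(L)$, since $\tau$ fixes $L$ pointwise.)

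First I attack $\nabla_{\gamma'}(L) \subseteq N(IL/L)$. For $\mu \in O_v(IL)$ and $c \in O_v(L)$, the cyclic norm identity $N_{IL/L}(1 + c\mu) = \prod_{i=0}^{p-1}(1 + c\tau^i(\mu)) = 1 + \sum_{k=1}^{p} c^k s_k(\mu)$ reduces everything to controlling the elementary symmetric polynomials $s_k(\mu)$ in the $\tau$-conjugates of $\mu$. Taking $\mu = \lambda^j$ for suitable $j \in \{1,\dots,p-1\}$ and using the assumptions $v_L(\lambda) = v_L(\tau(\lambda)\lambda^{-1}-1) = \gamma < p^{-1}v(p)$, I compute sharp lower bounds for each $v_L(s_k(\lambda^j))$; this is the same style of elementary calculation that produced the estimate $v_{IJ}(\theta - \zeta(\theta)) = (2p^{-1})v(\pi)$ in the proof of Lemma~3.7. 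The upshot is that, for an appropriate choice of $c \in L$ and $j$, the norm $N_{IL/L}(1 + c\lambda^j)$ agrees with any prescribed $1 + y \in \nabla_{\gamma'}(L)$ modulo a strictly higher filtration level. Given $\alpha = 1 + x \in \nabla_{\gamma'}(L)$, I iteratively cancel the leading term of the residual and produce a sequence of partial products inside $N(IL/L)$ whose $v_L$-limit is $\alpha$; the Henselian property of $(L,v_L)$, together with the fact that $N(IL/L)$ is closed under $v_L$-limits of such Cauchy sequences in $L$, then secures $\alpha \in N(IL/L)$.

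For $\nabla_{\gamma''}(K) \subseteq N(IL/K)$ I use the tower decomposition $N_{IL/K} = N_{L/K} \circ N_{IL/L}$. Since $L \neq I$, Lemma~3.5~(ii) makes $L/K$ totally ramified cyclic of degree $p$, so the same symmetric-polynomial analysis applies with a uniformizer of $L$ in the role of $\lambda$ and yields $\nabla_{\gamma''}(K) \subseteq N_{L/K}(\nabla_{\gamma'}(L))$. Composing with the first inclusion delivers the second.

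The main obstacle is the quantitative bookkeeping. Because $\mathrm{char}(\widehat{K}) = p$, the trace term $s_1(\mu)$ carries an extra factor of $v(p)$ in its valuation, while the intermediate $s_k$ with $2 \le k \le p-1$ have valuations controlled by a delicate mix of $\gamma$ and $v_L(\tau(\lambda)\lambda^{-1}-1) = \gamma$; tight lower bounds are needed for the successive approximation to converge at the prescribed rate, and it is precisely these bounds that force the exponents $(2p-2)\gamma$ and $(p^2-1)(4p-2)\gamma$. Once the valuation accounting is in place, the rest of the argument is formal.
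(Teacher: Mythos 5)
Your plan diverges from the paper's proof in a way that leaves a genuine gap, and it also repairs the (real) typo in the hypothesis in the wrong direction.

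First, the hypothesis. You read $\lambda\in O_{v}(IL)$ and keep $\tau$ as a generator of $\mathcal{G}(IL/L)$. The way the lemma is actually invoked (via (3.2), where $\theta\in L$ and $\zeta$ generates $\mathcal{G}(L/K)$) shows the intended reading is the opposite one: $\lambda\in O_{v}(L)$ as stated, with $\tau$ a generator of $\mathcal{G}(L/K)$ (equivalently of $\mathcal{G}(IL/I)$). This matters because it is exactly where the $p$-quasilocal hypothesis enters: since $L$ is $p$-quasilocal, $\mathcal{G}(L/K)$ acts trivially on $L^{\ast }/N(IL/L)$ (\cite{Ch6}, Lemma~4.2, as in the proof of (3.1)), so $\tau(\lambda)\lambda ^{-1}=N_{L}^{IL}(\theta ^{\prime })$ for some \emph{unit} $\theta ^{\prime }\in IL$. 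The paper then shows $v_{IL}(\tau ^{\prime }(\theta ^{\prime })-\theta ^{\prime })\le \gamma $ (using that $IL/L$ is immediate and $\gamma \notin pv(L)$), and a single application of the Hensel--Rychlik condition (2.2) to the minimal polynomial of $\theta ^{\prime }-1$ over $L$ yields $\nabla _{\gamma ^{\prime }}(L)\subseteq N(IL/L)$ with $\gamma ^{\prime }=2(p-1)\gamma $. Your argument never uses quasilocality at all. That is a strong warning sign: the statement is not a general fact about Henselian fields, and the mechanism that makes it true is precisely the one your reading of the hypothesis discards.

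Second, and decisively, your closing step fails. You build a Cauchy sequence of partial products inside $N(IL/L)$ converging in value to $\alpha $ and then invoke ``the fact that $N(IL/L)$ is closed under $v_{L}$-limits.'' For an \emph{immediate} extension of degree $p$ in residue characteristic $p$, the norm group need not be closed and need not contain any $\nabla _{\delta }$ --- that is the defect phenomenon this entire section is about, and establishing $\nabla _{\gamma '}(L)\subseteq N(IL/L)$ is essentially equivalent to the closedness you are assuming. ($L$ is Henselian, not complete, so you cannot pass to a limit of preimages either.) The paper takes no limit: Hensel--Rychlik produces an actual root in $IL$ of a perturbed minimal polynomial, hence an actual element with the prescribed norm, in one step. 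Your successive approximation has no base case to terminate on. The same criticism applies to the second inclusion: you would need the surjectivity statement $\nabla _{\gamma ''}(K)\subseteq N_{L/K}(\nabla _{\gamma '}(L))$, which you do not prove; the paper instead uses the density of $pv(K)$ in $\mathbb R$ to manufacture a unit $1+\mu _{\varepsilon }\lambda $, lifts it to $\mu ^{\prime }\in IL$ with all $p^{2}-1$ conjugate differences of valuation at most $(2p-1)\gamma +\varepsilon $, and applies (2.2) once to the degree-$p^{2}$ minimal polynomial of $\mu ^{\prime }$ over $K$ --- which is where $\gamma ''=2(p^{2}-1)(2p-1)\gamma $ comes from. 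The ``quantitative bookkeeping'' you defer is not a routine afterthought; it is the proof.
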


\begin{proof}
Fix an element $\theta ^{\prime } \in (IL) ^{\ast }$ so that $N _{L}
^{IL} (\theta ^{\prime }) = \tau(\lambda )\lambda ^{-1}$, and put
$\tilde \gamma = v _{IL}(\tau ^{\prime } (\theta ^{\prime }) -
\theta ^{\prime })$, for some $\tau ^{\prime } \in
\mathcal{G}(IL/L)$, $\tau \neq 1$. As in the proof of (3.1), one
obtains that if $\tilde \gamma > \gamma $, then $v _{IL} (\lambda -
1 - (\theta ^{\prime } - 1) ^{p}) \ge \tilde \gamma $, which implies
$\gamma = v _{L}(\lambda - 1) = v _{IL}(\theta ^{\prime } - 1)
^{p}$. Since $IL/L$ is immediate, this contradicts the assumption
that $\gamma \notin pv(L)$ and thereby proves that $\tilde \gamma
\le \gamma $. Applying next (2.2) to the minimal polynomial of
$\lambda ^{\prime }$ over $L$, one obtains that $\nabla _{\gamma
'}(L) \subseteq N(IL/L)$. Observe that $pv(K)$ is a dense subgroup
of $\mathbb R$ ($I/K$ is immediate, whence $v(K)$ is noncyclic, see
\cite{TY}, Proposition~2.2). Therefore, for each $\varepsilon > 0$,
one can find an element $\mu _{\varepsilon } \in K$ such that $(2p -
3)\gamma < v(\mu _{\varepsilon }) < (2p - 3)\gamma + \varepsilon $
and $v(\mu _{\varepsilon }) \in pv(K)$. Hence, by the choice of
$\lambda $, $\gamma ^{\prime } < v _{L} (\mu _{\varepsilon }\lambda
)) < \gamma ^{\prime } + \varepsilon $, $v _{L}(\mu _{\varepsilon
}\lambda ) \notin pv(L)$, and $(2p - 1)\gamma < v _{L}(\tau ^{j} (1
+ \mu _{\varepsilon }\lambda ) - 1 - \mu _{\varepsilon }\lambda ) <
(2p - 1)\gamma + \varepsilon $, $j = 1, \dots , p - 1$. As $\nabla
_{\gamma '} \subseteq N(IL/L)$, these calculations prove the
existence of an element $\mu ^{\prime } \in \nabla _{0}(IL)$ of norm
$N _{L} ^{IL} (\mu ^{\prime }) = 1 + \mu _{\varepsilon }\lambda $.
Let $f$ be the minimal polynomial of $\mu ^{\prime }$ over $K$. It
is easily seen that $f$ is of degree $p ^{2}$. Using the above
calculations, observing that the natural action of
$\mathcal{G}(IL/K)$ on $L ^{\ast }$ induces on $N(IL/L)$ a structure
of a $\mathbb Z[\mathcal{G}(L/K)]$-module, and arguing as in the
proof of the inclusion $\nabla _{\gamma '} \subseteq N(IL/L)$, one
obtains that $0 < v _{IL}(\mu ^{\prime } - \varphi (\mu ^{\prime }))
\le (2p - 1)\gamma + \varepsilon $ when $\varphi $ runs across
$\mathcal{G}(IL/K) \setminus \{1\}$. Since $\varepsilon $ can be
taken smaller than any fixed positive number, this enables one to
deduce from (2.2) that $\nabla _{\gamma ''} \subseteq N(IL/L)$, so
Lemma 3.9 is proved.
\end{proof}

\medskip
It is now easy to prove Theorem 3.1 in the remaining case where
$v(K) \le \mathbb R$. Take elements $\gamma \in v(K) \setminus
pv(K)$ and $\tilde \mu \in K$ so that $\gamma > 0$ and $v(\tilde \mu
) = \gamma $. We prove that if $\gamma $ is sufficiently small, then
the extension $L _{\tilde \mu } = L$ of $K$ in $K _{\rm sep}$
generated by a root of the polynomial $f _{\tilde \mu }(X) = X ^{p}
- X - \tilde \mu ^{-1}$ satisfies the following conditions:
\par
\medskip
(3.2) $L \subseteq K(p)$, $[L\colon K] = p$ and there exists $\theta
\in L$, such that $v _{L}(\theta ) = p ^{-1}v(\tilde \mu ) = p
^{-1}\gamma $, $v _{L}(\zeta (\theta ) - \theta ) = (2p ^{-1})\gamma
$ and $v _{L}(\zeta (\theta )\theta ^{-1} - 1) = p ^{-1}\gamma $.
\par
\medskip
We show that one can take as $\theta $ the inverse of some root of
$f _{\tilde \mu }$. If char$(K) = p$, this is obtained by direct
calculations (as in the proof of (3.1)). Suppose further that
char$(K) = 0$, take a primitive $p$-th root of unity $\varepsilon
\in K _{\rm sep}$, and put $m = [K(\varepsilon )\colon K]$. It is
well-known (cf. \cite{L}, Ch. VIII, Sect. 3) that $K(\varepsilon
)/K$ is cyclic and $m \mid (p - 1)$. Set $\mu = m\tilde \mu $, fix a
generator $\varphi $ of $\mathcal{G}(K(\varepsilon )/K)$, and let
$s$ and $l$ be positive integers, such that $\varphi (\varepsilon )
= \varepsilon ^{s}$ and $p \mid (sl - 1)$. Denote by $\Lambda
^{\prime }$ some extension of $K(\varepsilon )$ in $K _{\rm sep}$
obtained by adjunction of a $p$-th root of the element $\rho (\mu )
= \prod _{u=0} ^{m-1} \varphi ^{i} (1 + (\varepsilon - 1) ^{p}\mu
^{-1}) ^{l ^{i}}$. It is easily verified that $\varphi (\rho (\mu
))\rho (\mu ) ^{-s} \in K(\varepsilon ) ^{\ast p}$. Observing also
that $v ^{\prime }(\rho (\mu ) - 1 - m(\varepsilon - 1 ) ^{p}\mu
^{-1}) \ge (2p)v ^{\prime }(\varepsilon - 1) - 2\gamma $, and that
the polynomial $g(X) = (X + 1) ^{p} - \rho (\mu )$ is irreducible
over $K(\varepsilon )$), one concludes that $\rho (\mu ) \notin
K(\varepsilon ) ^{\ast p}$. Hence, by Albert's theorem (cf.
\cite{A1}, Ch. IX), $\Lambda ^{\prime } = \Lambda (\varepsilon )$,
for some $\Lambda \in I(K(p)/K)$ with $[\Lambda \colon K] = p$. Our
calculations also show that $\Lambda ^{\prime }/K(\varepsilon )$ is
totally ramified. Since $m \mid (p - 1)$, this proves that $\Lambda
/K$ is totally ramified as well. Note further that when $\gamma $ is
sufficiently small, $\Lambda ^{\prime }/K(\varepsilon )$ possesses a
primitive element which is a root of the polynomial $X ^{p} - \tilde
\mu ^{p-1}X - \tilde \mu ^{p-1}$. This is obtained by applying (2.2)
to the polynomial $\tilde h(X) = g((\varepsilon - 1) ^{-1}\tilde \mu
X) \in O _{v'}(K(\varepsilon ))$. Thus it becomes clear that
$\Lambda ^{\prime }/K(\varepsilon )$ has a primitive element $\xi $
satisfying $f _{\tilde \mu }(X)$. This implies that $[K(\xi )\colon
K] = p$, so it follows from the cyclicity of $\Lambda ^{\prime }/K$
and Galois theory that $K(\xi ) = \Lambda = L$. Using again (2.2),
one concludes that when $\gamma $ is sufficiently small, the element
$\theta = \xi ^{-1}$ satisfies the inequalities required by (3.2).
Since $p ^{-1}\gamma \notin pv(L)$, the obtained result and Lemma
3.9 prove Theorem 3.1.

\medskip
Let $(K, v)$ be a Henselian field, such that char$(\widehat K) = p >
0$, and let $I/K$ be a $\mathbb Z_{p}$-extension, such that
$\widehat I = \widehat K$. Denote by $I _{n}$ the extension of $K$
in $I$ of degree $p ^{n}$, and put $v _{n} = v _{I _{n}}$, for each
$n \in \mathbb Z$, $n \ge 0$. The uniqueness, up-to an equivalence,
of $v _{n}$ implies the following inclusion, for every index $n$:
\par
\medskip
(3.3) $\{\psi _{n}(u _{n})u _{n} ^{-1}\colon \ u _{n} \in I _{n}
^{\ast }, \psi _{n} \in \mathcal{G}(I _{n}/K)\} \subseteq \nabla
_{0}(I _{n})$.
\par
\medskip\noindent
We say that $I$ is a norm-inertial extension of $K$, if $\nabla
_{0}(K) \subseteq N(I _{n}/K)$, for each $n \in \mathbb N$. Suppose
that $H \neq pH$, for every $H \in {\rm Is}_{v} ^{\prime }(K)$, $H
\neq \{0\}$. We conclude this Section with the proof of the
equivalence of the following statements in case $I/K$ is immediate:
\par
\medskip
(3.4) (i) $I/K$ is norm-inertial;
\par
(ii) $I/I _{n}$ is norm-inertial, for every index $n$;
\par
(iii) For each $\gamma \in v(K)$, $\gamma > 0$, there exists $\mu
_{n}(\gamma ) \in O _{v}(I _{n})$, such that $v _{n}(\varphi
_{n}(\mu _{n}(\gamma )) - \mu _{n}(\gamma )) < \gamma $, for each
$\varphi _{n} \in \mathcal{G}(I _{n}/K) \setminus \{1\}$.
\par
\medskip
The implication (3.4)(ii)$\to $(3.4) (i) is obvious and the
implication (3.4) (iii) $\to $(3.4) (ii) follows from (2.2),
\cite{Ch2}, II, (2.6) and (2.7), and the fact that $H _{n} \neq pH
_{n}$, for every $H _{n} \in {\rm Is}_{v _{n}} ^{\prime }(I _{n})$,
and each $n \in \mathbb N$. The implication (3.4) (i)$\to $(3.4)
(iii) can be deduced from the following result:
\par
\medskip
(3.5) Let $(K, v)$ be a Henselian field, $L \in I(K(p)/K)$ a cyclic
extension of $K$ of degree $p ^{n}$, $\psi $ a generator of
$\mathcal{G}(L/K)$, $\lambda $ and $\lambda _{0}$ be elements of $M
_{v}(L)$ and $M _{v}(K)$, respectively, such that $v(\lambda _{0})
\in v(K) \setminus pv(K)$ and $N _{K}^{L}(1 + \lambda ) = 1 +
\lambda _{0}$, where $N _{K}^{L}$ is the norm map. Then $v _{L}(\psi
^{j}(\lambda ) - \lambda ) \le v(\lambda _{0})$, for $j = 1, \dots ,
p ^{n} - 1$.
\par
\medskip
It is easy to see that $v _{L}(\psi (\alpha ) - \alpha ) \le v
_{L}(\psi ^{j}(\alpha ) - \alpha )$, for any $\alpha \in L$ and each
index $j$, and equality holds in the case where $p \dagger j$. When
$p \dagger j$, i.e. $\psi ^{j}$ generates $\mathcal{G}(L/K)$, this
leads to the conclusion of (3.5). Thus our assertion is proved in
case $n = 1$, so we assume further that $n \ge 2$. Let $k$ be an
integer with $1 \le k < n$, $L _{k}$ the fixed field of $\psi ^{p
^{k}} $, and $\lambda _{k} = -1 + \prod _{u=0} ^{p ^{k}-1} (1 + \psi
^{u}(\lambda ))$. Clearly, $N _{L _{k}}^{L}(1 + \lambda _{k}) = 1 +
\lambda _{0}$. Note also that $v _{L}(\psi ^{p ^{k}}(\lambda ) -
\lambda ) \le v(\lambda _{0})$, provided $v _{L}(\psi ^{p
^{k}}(\lambda _{k}) - \lambda _{k}) \le v(\lambda _{0})$. Since
$\psi ^{p ^{k}}$ is a generator of $\mathcal{G}(L/L _{k})$, these
observations enable one to complete the proof of (3.5) by a standard
inductive argument.

\medskip
At the same time, it is easily deduced from (2.2) (without
restrictions on $v(K)$) that the fulfillment of (3.4) (iii) ensures
that $I/K$ is norm-inertial.

\section{Proof of Theorem 1.1}

Let first $K$ be an arbitrary $p$-quasilocal nonreal field
containing a primitive $p$-th root of unity unless char$(K) = p$.
Then cd$(\mathcal{G}(K(p)/K)) \le 2$ and equality holds if and only
if char$(K) \neq p$ and Br$(K) _{p} \neq \{0\}$ (see \cite{Ch6},
Proposition~5.1, and \cite{S1}, Ch. I, 4.2). When $K$ possesses a
Henselian valuation $v$ with $v(K) \neq pv(K)$, this enables one to
deduce from \cite{Ch1}, (1.2), \cite{Ch5}, Lemma~3.6, and
\cite{Ch3}, Lemma~1.1 (b) (an analogue to a part of the main result
of \cite{MT}) that cd$(\mathcal{G}(K(p)/K)) = r(p)_{K}$. At the same
time, the assumptions on $K$, Lemma 3.3 (i) and \cite{Ch5}, I,
Lemma~3.5, indicate that $K$ is a nonreal field. These observations,
combined with \cite{W}, Lemma~7 (or \cite{Ch6}, Corollary~5.3),
prove (1.4). Using (1.4), Remark 3.4 and Lemma 3.3 (i), one deduces
the assertion of Theorem 1.1 in the special case where
char$(\widehat K) \neq p$.
\par
In the rest of our proof of Theorem 1.1, we assume that $(K, v)$ is
Henselian $p$-quasilocal with $v(K) \neq pv(K)$ and char$(\widehat
K) = p$. Suppose first that char$(K) = 0$ and $v _{G(K)}(K) \neq pv
_{G(K)}(K)$, $G(K)$ being defined as in Section 2, and fix a
primitive $p$-th root of unity $\varepsilon \in K _{\rm sep}$. Then
char$(K _{G(K)}) = 0$, so it follows from Remark 3.4 that
$\mathcal{G}(K _{G(K)}(p)/K _{G(K)}) \cong \mathbb Z_{p}$ unless $p
\notin P(K _{G(K)})$. In view of Remark 2.8 and Proposition 2.6,
this yields $G(K) = pG(K)$. Hence, by Proposition 2.5, $K
_{G(K)}(p)/K _{G(K)}$ is immediate relative to $\hat v _{G(K)}$
unless $p \in P(\widehat K)$, and by (1.4) and Remark 3.4, applied
to $(K, v _{G(K)})$, $\mathcal{G}(K(p)/K)$ has the following
properties:
\par
\medskip
(4.1) (i) $\mathcal{G}(K(p)/K) \cong \mathcal{G}(K _{G(K)}(p)/K
_{G(K)})$, provided that $\varepsilon \notin K$; when this occurs,
$r(p)_{K} \le 1$ and Br$(K) _{p}$ is isomorphic to Br$(K _{G(K)})
_{p}$ or to a divisible hull of $_{p}{\rm Br}(K _{G(K)}) \oplus
v(K)/pv(K)$, depending on whether or not $r(p)_{K} = 0$;
\par
(ii) If $\varepsilon \in K$ and either $p \in P(K _{G(K)})$ or
$v(K)/pv(K)$ is noncyclic, then $r(p)_{K} = 2$,
$\mathcal{G}(K(p)/K)$ is a Demushkin group and Br$(K) _{p} \cong
\mathbb Z(p ^{\infty })$;
\par
(iii) $\mathcal{G}(K(p)/K) \cong \mathbb Z_{p}$, if $\varepsilon \in K$,
$p \notin P(K _{G(K)})$ and $v(K)/pv(K)$ is of order $p$; in this
case, Br$(K) _{p} = \{0\}$ and finite extensions of $K$ in $K(p)$
are totally ramified.
\par
\medskip\noindent
When $p \in P(K _{G(K)})$, it also becomes clear that the compositum
$K _{G(K)} ^{\prime }$ of the inertial lifts in $K _{\rm sep}$,
relative to $v _{G(K)}$, of the finite extensions of $K _{G(K)}$ in
$K _{G(K)}(p)$, has the following properties:
\par
\medskip
(4.2) $K _{G(K)} ^{\prime }$ is a $\mathbb Z _{p}$-extension of $K$
with $v(K _{G(K)} ^{\prime }) = v(K)$; more precisely $K _{G(K)}
^{\prime }/K$ is immediate relative to $v$ unless $p \in P(\widehat
K)$.
\par
\medskip\noindent
Statements (4.1), (4.2) and Corollary 2.7 reduce the proof of
Theorem 1.1 to the special case where char$(K) = p$ or char$(K) = 0$
and the group $v(K)/G(K)$ is $p$-divisible. Then it follows from
Corollary 2.7 and Remark 2.8 that $r(p)_{K} \ge 2$ and $v(K)/pv(K)$
is of order $p$. This, combined with (2.3), (2.4) (i) and Lemma 3.3,
proves the following assertions:
\par
\medskip
(4.3) If $d(K)$ contains a noncommutative defectless $K$-algebra of
$p$-primary index, then the compositum $U _{p}(K)$ of the inertial
extensions of $K$ in $K(p)$ is a $\mathbb Z_{p}$-extension of $K$.
In addition, every $D \in d(K)$ and each finite extension of $K$ in
$K(p)$ are defectless over $K$.
\par
\medskip\noindent
Note also that $\nabla _{0}(K) \subseteq N(U/K)$, for every inertial
extension $U/K$; this is a well-known consequence of (2.2). When
$U/K$ is cyclic and $U \subseteq K(p)$, this enables one to deduce
from \cite{P}, Sect. 15.1, Proposition~b, that Br$(U/K) = \{b \in
{\rm Br}(K)\colon \ [U\colon K]b = 0\}$ and Br$(U/K)$ is cyclic of
order $[U\colon K]$. Thereby, it becomes clear that Br$(U _{p}(K)/K)
= {\rm Br}(K) _{p} \cong \mathbb Z(p ^{\infty })$, which proves the
assertion of Theorem 1.1 in the case singled out by (4.3).
\par
Suppose now that $K$ has a finite extension $L ^{\prime }$ in $K(p)$
of nontrivial defect, choose $L ^{\prime }$ to be of minimal
possible degree over $K$, and fix a maximal subfield $L$ of $L
^{\prime }$ including $K$. Clearly, $L ^{\prime }/L$ is immediate
and $[L ^{\prime }\colon L] = p$, and by \cite{Ch5}, Theorem~4.1,
$L$ is $p$-quasilocal. In addition, it follows from (1.1) and Lemma
3.3 (i) that Cor$_{L/K}$ induces an isomorphism of Br$(L) _{p}$ on
Br$(K) _{p}$. Observing also that $v(K)/pv(K) \cong v(L)/pv(L)$
(see, e.g., \cite{Ch1}, Remark~2.2), one concludes that $L$, $v
_{L}$ and $p$ satisfy the conditions of Theorem 3.1, which yields
Br$(K) _{p} \cong \mathbb Z(p ^{\infty })$, as claimed. For the rest of
the proof of Theorem 1.1, we need the following lemma.

\par
\medskip
\begin{lemm} In the setting of Theorem 1.1, suppose that {\rm
char}$(\widehat K) = p$ and there exists a finite extension of $K$
in $K(p)$ of nontrivial defect. Then there is a field $I \in
I(K(p)/K)$, such that $I/K$ is immediate and $[I\colon K] = p$.
\end{lemm}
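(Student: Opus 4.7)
The plan is to argue by induction on $[L':K]$, where $L'/K$ is the minimally defective extension chosen in the preceding paragraph and $L$ is the maximal subfield of $L'$ with $[L':L]=p$; thus $L'/L$ is immediate and $L/K$ is defectless. The first step is to show $p \notin P(\widehat K)$: otherwise (2.4)(i) supplies a cyclic inertial degree-$p$ extension $I_{K}/K$, and (1.2)(ii) together with $\mathrm{Br}(K)_{p}\neq\{0\}$ yields some $b \in K^{*}\setminus N(I_{K}/K)$. The cyclic algebra $(I_{K}/K,\tau,b)$ is then a defectless element of $d(K)$ of index $p$, whose existence, by (4.3) from the preceding proof of Theorem 1.1, forces every finite extension of $K$ in $K(p)$ to be defectless, contradicting $L'/K$. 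Given $p\notin P(\widehat K)$, Lemma 3.5(iii) shows that every non-totally-ramified cyclic degree-$p$ extension of $K$ in $K(p)$ is automatically immediate, so it suffices to produce such an extension.

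For the inductive step, if $[L:K]=1$ I take $I:=L'$. Otherwise $\mathcal{G}(K(p)/L)$ is a proper open subgroup of the pro-$p$ group $\mathcal{G}(K(p)/K)$, hence contained in an open index-$p$ subgroup; this corresponds to a cyclic degree-$p$ extension $K_{1}/K$ with $K_{1}\subseteq L$. Multiplicativity of defect ($d(L/K)=d(L/K_{1})d(K_{1}/K)=1$) makes $K_{1}/K$ defectless, so it is either immediate (whence $I:=K_{1}$) or totally ramified. In the totally ramified case $(K_{1},v_{K_{1}})$ inherits all hypotheses of Lemma 4.1, and $L'/K_{1}$ is defective of strictly smaller degree $[L':K]/p$; the inductive hypothesis applied to $K_{1}$ produces an immediate $I_{1}/K_{1}$ of degree $p$, so $I_{1}/K$ is defective of degree $p^{2}$ and the minimality of $L'$ forces $[L':K]=p^{2}$, i.e.\ $[L:K]=p$.

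The base case $[L:K]=p$---with $L/K$ totally ramified (inertial being excluded by $p\notin P(\widehat K)$) and $L'/L$ immediate---is the main obstacle. Since $v(K)/pv(K)\cong\mathbb Z/p$, every totally ramified cyclic degree-$p$ extension of $K$ has value group $v(L)$. Picking any other cyclic degree-$p$ extension $T\neq L$ of $K$ (available since $r(p)_{K}\ge 2$), if $T/K$ is not totally ramified the first step finishes the proof, so I assume otherwise. Then $N:=LT$ is a Galois $(\mathbb Z/p)^{2}$-extension of $K$ with $\widehat N=\widehat K$, $v(N)=v(L)$, $e(N/K)=p$, and defect $p$. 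The plan is to rule out that all $p+1$ intermediate degree-$p$ subfields $M_{i}$ of $N$ are totally ramified over $K$: direct computation on the norm map gives $v(N(M_{i}/K))=v(K)$ and $v(N(N/K))=pv(K)$, so (1.2)(i)--(ii) for the PQL field $K$ with $d(p)=1$ (yielding $K^{*}/N(N/K)\cong(\mathbb Z/p)^{2}$, $N(N/K)=\bigcap_{i}N(M_{i}/K)$, and the injectivity of $M\mapsto N(M/K)$) force the unit-part quotient $O^{*}_{v}(K)/(O^{*}_{v}(K)\cap N(N/K))$ to have order exactly $p$, whereas a character-span argument over $\mathbb F_{p}$ on the $p+1$ distinct index-$p$ subgroups $O^{*}_{v}(K)\cap N(M_{i}/K)$ forces their intersection to have index at least $p^{2}$ in $O^{*}_{v}(K)$. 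This contradiction forces some $M_{i}/K$ to be non-totally-ramified, hence immediate by step one, supplying the desired $I$. This norm-group argument in the base case is where the main difficulty lies.
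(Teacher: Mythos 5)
Your reduction to the case $[L'\colon K]=p^{2}$ (induction on the degree of a minimal defective extension, plus the elimination of $p\in P(\widehat K)$ via a defectless cyclic algebra and (4.3)) is essentially sound and parallels the paper's own reduction, which instead invokes the subnormality of proper subgroups of finite $p$-groups to pass directly to an extension $M$ of degree $p^{2}$ and defect $p$. The genuine gap is in your base case. You pick an arbitrary second cyclic degree-$p$ extension $T\neq L$ and assert that $N=LT$ satisfies $\widehat N=\widehat K$, $v(N)=v(L)$, and has defect $p$. This is unjustified and in general false: by Lemma 3.5 (ii) applied to $L$ and its unique non-totally-ramified degree-$p$ extension $L'$, if $T\not\subseteq L'$ then $LT\cap L'=L$, so $LT/L$ is totally ramified and $N/K$ is totally ramified with $e(N/K)=p^{2}$. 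In that case $v(N)/v(K)$ is cyclic of order $p^{2}$, one computes $v(N(N/K))=p^{2}v(N)=v(K)$, the unit part of $K^{\ast }/N(N/K)$ has order $p^{2}$, and your count yields no contradiction: all $p+1$ intermediate fields $M_{i}$ may perfectly well be totally ramified. Your assertion that $N$ has defect $p$ is equivalent to $T\subseteq L'$, i.e. to $L'/K$ being abelian and noncyclic --- and that is exactly the crux the paper must (and does) establish, by showing via Theorem 3.1 and transitivity of norms that $N(L'/K)\subseteq \{\alpha \in K^{\ast }\colon v(\alpha )\in pv(K)\}$ and $K^{\ast p}\subseteq N(L'/K)$, so that $\mathcal{G}(L'/K)\cong K^{\ast }/N(L'/K)$ has exponent $p$. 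That step is absent from your argument.

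A secondary flaw: even granting $N=L'$, your concluding count is wrong as stated. If all $M_{i}$ were totally ramified, then each $U\cap N(M_{i}/K)$ (with $U=O_{v}(K)^{\ast }$) contains $U\cap N(N/K)$, which you have shown to be of index $p$ in $U$; hence these $p+1$ groups are not distinct index-$p$ subgroups --- $p$ of them coincide with $U\cap N(N/K)$ and one equals $U$ itself, so no ``span'' estimate applies. The contradiction is reached more directly: by (1.2) the groups $N(M_{i}/K)$ exhaust all index-$p$ subgroups of $K^{\ast }$ containing $N(N/K)$, so one of them equals $U\cdot N(N/K)$, whose value group is $pv(K)\neq v(K)$; that $M_{i}$ cannot be totally ramified. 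This is precisely how the paper concludes. (Two small slips besides: a defectless degree-$p$ extension cannot be immediate, and the defectlessness of your cyclic algebra $(I_{K}/K,\tau ,b)$ should be secured by choosing $v(b)\notin pv(K)$, which is possible since $v(N(I_{K}/K))=pv(K)$.)
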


\begin{proof}
In view of Galois theory and the subnormality of proper subgroups of
finite groups, it suffices to consider the special case in which $K$
has an extension $M$ in $K(p)$ of degree $p ^{2}$ and defect $p$. We
show that there exists a field $I \in I(M/K)$, such that $[I\colon
K] = p$ and $I/K$ is immediate. Let $R$ be an extension of $K$ in
$M$ of degree $p$. It follows from (2.3) that $v(R)/pv(R) \cong
v(K)/pv(K)$, so we have $v(R) \neq pv(R)$. If $R/K$ is immediate,
there is nothing to prove, so we assume that this is not the case.
Our extra hypothesis guarantees that $M/R$ is immediate, and since
$R$ is $p$-quasilocal \cite{Ch5}, I, Theorem~4.1 (i), one obtains
from Lemma 3.5 (i) and (ii) that $v(R)/pv(R)$ and $v(K)/pv(K)$ are
of order $p$, $\widehat R$ is perfect and $p \notin P(\widehat R)$.
Note further that, by Lemma 3.3 (ii), $M$ is the only extension of
$R$ in $R(p) = K(p)$ of degree $p$, which is not totally ramified.
Statement (2.3) and these observations indicate that $p \notin
P(\widehat K)$ and $R/K$ is totally ramified. At the same time, by
Theorem 3.1, Br$(R) _{p} \cong \mathbb Z(p ^{\infty })$, so it
follows from (1.1) and Lemma 3.3 (i) that Br$(K) _{p} \cong \mathbb
Z(p ^{\infty })$. Using the normality of $R/K$ and the equality
$[M\colon K] = p ^{2}$, one proves that $M/K$ is abelian. The
obtained results, combined with (1.2) (ii) and \cite{Ch6},
Lemma~2.1, imply that $\mathcal{G}(M/K) \cong K ^{\ast }/N(M/K)$. We
show that $M/K$ is noncyclic. It is clear from Theorem 3.1 and the
Henselian property of $v$ that $N(M/R) = \{\rho \in R ^{\ast }\colon
\ v _{R} (\rho ) \in pv(R)\}$. This, combined with Hilbert's Theorem
90 and the transitivity of norm maps in the field tower $K \subset R
\subset M$, implies that $N(M/K)$ is included in the set $\Omega
_{p}(K) = \{\alpha \in K ^{\ast }\colon \ v(\alpha ) \in pv(K)\}$.
Thus it turns out that $K ^{\ast p} \subseteq N(M/K)$, whence $K
^{\ast }/N(M/K)$ has exponent $p$. As $\mathcal{G}(M/K) \cong K
^{\ast }/N(M/K)$, it is now easy to see that $\mathcal{G}(M/K)$ is
noncyclic. By Galois theory, this means that $M = RL$, for some $L
\in I(M/K)$ with $[L\colon K] = p$ and $L \neq R$. Clearly, one may
assume for the rest of the proof that $L/K$ is totally ramified. By
(1.2), $K ^{\ast } = N(R/K)N(L/K)$, $K ^{\ast }/N(R/K)$ and $K
^{\ast }/N(L/K)$ are of order $p$, and $N(M/K) = N(R/K) \cap
N(L/K)$. As $v(K)/pv(K)$ is of order $p$, these observations prove
the existence of elements $\lambda $ and $r$ of $K ^{\ast }$, such
that $v(\lambda ) = v(r) \notin pv(K)$, $\lambda \in N(L/K)$, $r \in
N(R/K)$ and the co-sets $\lambda N(M/K)$ and $rN(M/K)$ generate $K
^{\ast }/N(M/K)$. In addition, it follows from (1.2) that the set
$\{N(K ^{\prime }/K)\colon \ K ^{\prime } \in I(M/K)\}$ equals the
set of subgroups of $K ^{\ast }$ including $N(M/K)$. Therefore,
there is $I \in I(M/K)$, such that $[I\colon K] = p$ and $N(I/K)$ is
generated by $N(M/K)$ and $\lambda r ^{-1}$. Hence, $N(I/K) = \Omega
_{p}(K)$, which means that $I/K$ is not totally ramified. As $M/R$
is immediate, this leads to the conclusion that $I/K$ is also
immediate, which proves Lemma 4.1.
\end{proof}

The idea of the remaining part of the proof of Theorem 1.1 is to
show that if finite extensions of $K$ in $K(p)$ are defectless, then
every $\Delta \in d(K)$ is defectless over $K$. Its implementation
relies on the following two lemmas.

\medskip
\begin{lemm}
Let $K$ be a $p$-quasilocal field, for some $p \in \mathbb P$. Assume
that {\rm Br}$(K) _{p}$ is divisible, $M$ is an extension of $K$ in
$K(p)$ of degree $p$, $\psi $ is a generator of $\mathcal{G}(M/K)$
and $d(K)$ contains the algebra $(M/K, \psi , c)$, for some $c \in K
^{\ast } \setminus M ^{\ast p}$. Then $[(M/K, \psi , c)] = {\rm
Cor}_{M/K}([(M _{1}/M, \psi _{1}, c)])$, for some $M _{1} \in
I(K(p)/M)$ with $[M _{1}\colon M] = p$, and some generator $\psi
_{1}$ of $\mathcal{G}(M _{1}/M)$.
\end{lemm}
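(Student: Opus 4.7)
The plan is: (1) produce the unique preimage of $[(M/K,\psi,c)]$ in $\mathrm{Br}(M)[p]$ under the corestriction; (2) recast the desired identity as a cohomological equation via the projection formula; and (3) solve that equation using (1.2).

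For (1): since $K$ is $p$-quasilocal with $\mathrm{Br}(K)_p$ divisible, the second clause of (1.1) gives that $\mathrm{Cor}_{M/K}\colon \mathrm{Br}(M)_p \to \mathrm{Br}(K)_p$ is bijective. By \cite{Ch5}, I, Theorem~4.1 (i), $M$ is $p$-quasilocal, and the bijection forces $\mathrm{Br}(M)_p$ to be divisible, so (1.1) and (1.2) apply to $M$ as well. Restricting to $p$-torsion gives a bijection $\mathrm{Br}(M)[p] \xrightarrow{\sim} \mathrm{Br}(K)[p]$; let $\beta \in \mathrm{Br}(M)[p]$ be the unique preimage of $[(M/K, \psi, c)]$. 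By injectivity of this corestriction, it suffices to exhibit a pair $(M_1, \psi_1)$ such that $\mathrm{Cor}_{M/K}[(M_1/M, \psi_1, c)] = [(M/K, \psi, c)]$.

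For (2): I interpret cyclic algebras as cup products. Writing $[(M/K, \psi, c)] = \chi_\psi \cup (c)$ in $\mathrm{Br}(K)$, where $\chi_\psi \in H^1(\mathcal{G}_K, \mathbb{Z}/p\mathbb{Z})$ is the character cutting out $M/K$, and similarly $[(M_1/M, \psi_1, c)] = \chi_1 \cup (c)$ for a candidate $\chi_1 \in H^1(\mathcal{G}_M, \mathbb{Z}/p\mathbb{Z})$, the projection formula applied to $c \in K^{\ast}$ yields
\[
\mathrm{Cor}_{M/K}\bigl(\chi_1 \cup (c)\bigr) \;=\; \mathrm{Cor}_{M/K}(\chi_1) \cup (c).
\]
The target equation thus becomes $\bigl(\mathrm{Cor}_{M/K}(\chi_1) - \chi_\psi\bigr) \cup (c) = 0$ in $\mathrm{Br}(K)[p]$; equivalently $c \in N(F_\eta/K)$, where $F_\eta$ is the cyclic extension cut out by $\eta := \mathrm{Cor}_{M/K}(\chi_1) - \chi_\psi$. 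Any $\chi_1 \in H^1(\mathcal{G}_M, \mathbb{Z}/p\mathbb{Z})$ automatically cuts out a field $M_1 \subset M(p) = K(p)$, so admissibility of $M_1$ is automatic.

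For (3), the main obstacle: I must produce a nonzero $\chi_1$ solving this equation. By (1.2)(ii) applied to $M$, for any cyclic degree-$p$ extension $L/M$ in $K(p)$ the map $c'N(L/M) \mapsto [(L/M, \psi_L, c')]$ is an injective homomorphism $M^{\ast}/N(L/M) \to \mathrm{Br}(M)[p]$; by (1.2)(ii) both groups are elementary abelian $p$-groups of the same rank $d(p) = \dim_{\mathbb{F}_p}\mathrm{Br}(M)[p] = \dim_{\mathbb{F}_p}\mathrm{Br}(K)[p]$ (equality by the bijectivity of $\mathrm{Cor}_{M/K}$), so the map is a bijection. Consequently, for every fixed $L$ the class $\beta$ is represented by $[(L/M, \psi_L, c_L)]$ for a unique $c_L \in M^{\ast}/N(L/M)$, and the question becomes whether $L$ can be chosen so that $c_L \equiv c \pmod{N(L/M)}$. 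The hardest point is verifying that as $L$ varies over cyclic degree-$p$ subfields of $K(p)/M$, the injective correspondence (1.2)(i) between cyclic extensions and norm subgroups affords enough flexibility to accommodate the slot $c$ prescribed in $[(M/K,\psi,c)]$; the nonvanishing $\chi_\psi \cup (c) = [(M/K,\psi,c)] \neq 0$ ensures that any such $\chi_1$ is automatically nontrivial. Once such an $L$ is found, taking $M_1 = L$ and $\psi_1 = \psi_L$ completes the proof.
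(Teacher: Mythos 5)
Your steps (1) and (2) are sound and in fact follow the same strategy as the paper: both apply the projection formula to the fixed element $c \in K^{\ast}$ sitting in the second slot, reducing the problem to producing a suitable degree-$p$ character over $M$ for the first slot. The trouble is that step (3) does not produce it. You reduce the lemma to the question of whether $L$ (equivalently $\chi _{1}$) can be chosen so that $c _{L} \equiv c \pmod {N(L/M)}$, you explicitly label this ``the hardest point,'' and you then conclude with ``once such an $L$ is found \dots the proof is complete.'' That existence statement \emph{is} the lemma; nothing in your argument establishes it, and the nonvanishing of $\chi _{\psi } \cup (c)$ only shows that a solution $\chi _{1}$, if it exists, is nonzero. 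The paper closes exactly this gap by constructing the first-slot element explicitly. When ${\rm char}(K) = p$ it writes $(M/K, \psi , c)$ as a $p$-symbol $K[a, c)$ with $M$ the Artin--Schreier extension defined by $a$, uses surjectivity of the trace of the separable extension $M/K$ to find $\eta \in M$ with prescribed trace, takes $M _{1}$ to be the Artin--Schreier extension of $M$ defined by $\eta $, and applies the projection formula of \cite{MM}. When ${\rm char}(K) = 0$ it passes to $K(\varepsilon )$, presents $M(\varepsilon )/K(\varepsilon )$ as a Kummer extension by some $\mu \in K _{\varepsilon }$ with $A _{\varepsilon }(\mu , c; K(\varepsilon )) \cong (M/K, \psi , c) \otimes _{K} K(\varepsilon )$, and uses the $p$-quasilocal hypothesis (via \cite{Ch8}, Lemma~3.1, and $p > 2$) to write $\mu = N _{K(\varepsilon )}^{M(\varepsilon )}(\mu _{1}\kappa )$ with $\mu _{1} \in M _{\varepsilon }$, $\kappa \in K(\varepsilon )$; then $M _{1}(\varepsilon ) = M(\varepsilon )(\mu _{1} ^{1/p})$ descends by Albert's theorem to the required $M _{1}$, and the projection formula for symbols finishes. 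So the missing ingredient is a concrete trace/norm surjectivity statement for the element defining $M/K$ -- and that is precisely where the hypotheses ($p$-quasilocality, divisibility of ${\rm Br}(K) _{p}$) do their work.

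Two secondary points. First, your claim that the injection $M ^{\ast }/N(L/M) \hookrightarrow {} _{p}{\rm Br}(M)$ is a bijection ``because both are elementary abelian of the same rank $d(p)$'' is invalid when $d(p)$ is infinite: an injective linear map between vector spaces of equal infinite dimension need not be surjective. The correct route to ${\rm Br}(L/M) = {} _{p}{\rm Br}(M)$ is the $p$-quasilocal property of $M$ itself together with the index-exponent equality of \cite{Ch5}, I, Theorem~3.1, which forces every class of order $p$ to have index $p$ and hence to be split by every degree-$p$ extension of $M$ in $K(p)$. Second, in characteristic $p$ the cup product with $H ^{1}(\mathcal{G}_{K}, \mu _{p})$ must be replaced by the $p$-symbol pairing; the projection formula survives, but the uniform cohomological formalism you invoke needs this case distinction, which is also why the paper's proof splits into characteristic cases from the outset.
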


\begin{proof}
Suppose first that char$(K) = p$. By the Artin-Schreier theorem (cf.
\cite{L}, Ch. VIII, Sect. 6), then $M = K(\xi )$, where $\xi $ is a
root of the polynomial $X ^{p} - X - a$, for some $a \in K ^{\ast
}$. Clearly, $a$ can be chosen so that $(M/K, \psi , c)$ is
isomorphic to the $p$-symbol $K$-algebra $K[a, c)$, in the sense of
\cite{TY}. Since $M/K$ is separable, there exists $\eta \in M$ of
trace Tr$_{K}^{M}(\eta ) = c$. This implies that the polynomial $X
^{p} - X - \eta $ has no zero in $M$, whence, by the Artin-Schreier
theorem, its root field over $M$ is a cyclic extension of $M$ of
degree $p$. Since, by a known projection formula (see \cite{MM},
Proposition~3 (i)), Cor$_{M/K}([M[\eta , c)]) = [K[a, c)]$, these
observations prove Lemma 4.2 in the case of char$(K) = p$.
\par
In the rest of the proof, we assume that char$(K) = 0$, $\varepsilon
$ is a primitive $p$-th root of unity in $K _{\rm sep}$,
$[K(\varepsilon )\colon K] = m$ and $s$ is an integer satisfying the
equality $\varphi (\varepsilon ) = \varepsilon ^{s}$, where $\varphi
$ is a generator of $\mathcal{G}(K(p)(\varepsilon )/K(p))$. Then it
follows from (1.1), Lemma 3.3 (i) and \cite{Ch5}, I, Theorems~3.1
(i) and 4.1 (iii), that ind$(A) = {\rm ind}(A ^{\prime })$ whenever
$A \in d(M)$, $A ^{\prime } \in d(K)$, $[A] \in {\rm Br}(M) _{p}$
and Cor$_{M/K}([A]) = [A ^{\prime }]$. When $\varepsilon \in K$,
$F/K$ is cyclic of degree $p$, by Kummer theory, and by the
assumption on $c$, $F \neq M$, which implies $MF/M$ is cyclic and
$[MF\colon M] = p$. Since $M$ is $p$-quasilocal, these observations
enable one to deduce the assertion of Lemma 4.2 from Lemma 3.3.
\par
Suppose now that $\varepsilon \notin K$ and, for each $R \in
I(K(p)/K)$, let $R _{\varepsilon } = \{r \in R(\varepsilon ) ^{\ast
}\colon \ \varphi (r)r ^{-s} \in R(\varepsilon ) ^{\ast }\}$. It
follows from Albert's theorem (see \cite{A1}, Ch. IX, Theorem~6)
that $M(\varepsilon )$ is generated over $K(\varepsilon )$ by a
$p$-th root of some element $\mu \in K _{\varepsilon }$. In
addition, it becomes clear that $\mu $ can be chosen so that the
symbol $K(\varepsilon )$-algebra $A _{\varepsilon }(\mu , c;
K(\varepsilon ))$ is isomorphic to $(M/K, \psi , c) \otimes _{K}
K(\varepsilon )$. As $p > 2$, one also sees that $\mu \in
N(M(\varepsilon )/K(\varepsilon ))$, whence $\mu $ equals the norm
$N_{K(\varepsilon )}^{M(\varepsilon )}(\mu _{1}\kappa )$, for some
$\mu _{1} \in M _{\varepsilon }$, $\kappa \in K(\varepsilon )$ (see
\cite{Ch8}, Lemma~3.1). Denote by $M _{1} ^{\prime }$ the extension
of $M(\varepsilon )$ obtained by adjunction of the $p$-th roots of
$\mu _{1}$ in $K _{\rm sep}$. Since $\mu _{1} \notin M(\varepsilon )
^{\ast p}$, Albert's theorem indicates that $M _{1} ^{\prime } = M
_{1}(\varepsilon )$, for some $M _{1} \in I(K(p)/M)$ with $[M
_{1}\colon M] = p$. Thereby, it becomes clear that the symbol
$M(\varepsilon )$-algebra $A _{\varepsilon }(\mu _{1}, c;
M(\varepsilon ))$ is isomorphic to $(M _{1}/M, \psi _{1}, c) \otimes
_{M} M(\varepsilon )$, for some generator $\psi _{1}$ of
$\mathcal{G}(M _{1}/M)$. Applying the projection formula for symbol
algebras (cf., e.g., \cite{Ti}, Theorem~3.2), one concludes that
Cor$_{M(\varepsilon )/K(\varepsilon )}([A _{\varepsilon }(\mu _{1},
c; M(\varepsilon ))]) = [A _{\varepsilon }(\mu , c; K(\varepsilon
))]$. Using also the $K(\varepsilon )$-isomorphism $A _{\varepsilon
}(\mu , c; K(\varepsilon )) \cong (M/K, \psi , c)$ $\otimes _{K}
K(\varepsilon )$ (and the fact that $m \mid (p - 1)$), one obtains
from the RC-formula that Cor$_{M/K}([M _{1}/M, \psi _{1}, c)]) =
[(M/K, \psi , c)]$, as claimed by Lemma 4.2.
\end{proof}

\medskip
\begin{lemm}
Let $(K, v)$ be a Henselian $p$-quasilocal field with {\rm char}$(K)
= 0$, {\rm char}$(\widehat K) = p$ and $v(K) \neq pv(K)$. Assume
that finite extensions of $K$ in $K(p)$ are totally ramified, $(M,
c)$ is a pair satisfying the conditions of Lemma 4.2, and $F$ is an
extension of $K$ in $K _{\rm sep}$ obtained by adjunction of a
$p$-th root of $c$. Then the extension $MF/M$ is totally ramified of
degree $p$.
\end{lemm}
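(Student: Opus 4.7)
The plan is to prove $MF/M$ is totally ramified of degree $p$ by establishing both $[MF:M] = p$ and $e(MF/M) = p$. The first is immediate from the irreducibility of $X^{p} - c$ over $M$ (given $c \notin M^{*p}$ and $p$ prime). For the second, I would reduce to the case where $K$ contains a primitive $p$-th root of unity $\varepsilon$, handled by direct appeal to the total ramification hypothesis.

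\smallskip
\emph{Case 1: $\varepsilon \in K$.} Then $F = K(c^{1/p})$ is a cyclic Kummer extension of $K$ of degree $p$, so $F \in I(K(p)/K)$, and by hypothesis $F/K$ is totally ramified. Since $c \notin M^{*p}$, we have $F \neq M$; as both $M$ and $F$ have prime degree $p$ over $K$, it follows that $M \cap F = K$ and $[MF:K] = p^{2}$. The inclusion $MF \subseteq K(p)$ combined with the hypothesis yields $e(MF/K) = p^{2}$, and multiplicativity of the ramification index in the tower $K \subset M \subset MF$ gives $e(MF/M) = p$.

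\smallskip
\emph{Case 2: $\varepsilon \notin K$.} Let $K' = K(\varepsilon)$, a Galois extension of $K$ of degree $m$ dividing $p-1$. I would first verify that $K'$ inherits the hypotheses of Lemma 4.3: $K'$ is Henselian, $p$-quasilocal (preserved under extensions of degree coprime to $p$), has {\rm char}$(\widehat{K'}) = p$, and $v(K') \neq p v(K')$ (since $[v(K'):v(K)]$ divides $m$ and is thus coprime to $p$, so $v(K)/pv(K)$ injects into $v(K')/pv(K')$). Granting, moreover, that finite extensions of $K'$ in $K'(p)$ are also totally ramified (addressed below), Case 1 applied over $K'$ yields that $M'F'/M'$ is totally ramified of degree $p$, where $M' = MK'$ and $F' = FK'$ are both cyclic degree-$p$ extensions of $K'$ inside $K'(p)$, and $(M'/K',\psi,c) \in d(K')$ (its base change remains nontrivial because $[K':K]$ is coprime to the Brauer-class order $p$). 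Descent to $MF/M$ then follows from multiplicativity in the tower $M \subset MF \subset M'F'$: since $[M'F':MF]$ divides $m$ and is coprime to $p$, the $p$-part of $e(MF/M)$ equals that of $e(M'F'/M') = p$, and as $e(MF/M)$ divides $p$, we conclude $e(MF/M) = p$.

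\smallskip
\emph{Main obstacle.} The key technical step is verifying in Case 2 that finite extensions of $K'$ in $K'(p)$ are totally ramified. By (2.4) and Lemma 4.1 applied to $K'$, this reduces to two points: (a) $p \notin P(\widehat{K'})$, which follows because $p \notin P(\widehat K)$ (a consequence of the hypothesis together with (2.4)(i), since an inertial degree-$p$ extension of $K$ in $K(p)$ would contradict total ramification) and $\widehat{K'}/\widehat K$ has degree prime to $p$; and (b) the absence of immediate degree-$p$ extensions of $K'$ in $K'(p)$. Point (b) is the more delicate part, and I expect it to be settled by a Galois descent argument exploiting that $K'/K$ is Galois of degree $m$ coprime to $p$, translating a hypothetical immediate extension of $K'$ in $K'(p)$ into an incompatibility with the absence of immediate extensions of $K$ in $K(p)$ (itself implicit in the total ramification hypothesis on $K(p)/K$).
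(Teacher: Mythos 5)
Your Case 1 and the general frame ($[MF\colon M]=p$ because $c \notin M ^{\ast p}$; reduction to $K(\varepsilon )$) agree with the paper. The gap is in Case 2, and it sits exactly at the point you flag as the ``main obstacle'': the claim that finite extensions of $K ^{\prime } = K(\varepsilon )$ in $K ^{\prime }(p)$ are totally ramified. First, this claim is not a preliminary reduction but essentially the statement to be proved: $F ^{\prime } = F(\varepsilon ) = K ^{\prime }(c ^{1/p})$ is itself a cyclic extension of $K ^{\prime }$ of degree $p$ lying in $K ^{\prime }(p)$, so asserting that \emph{all} such extensions are totally ramified already contains the desired conclusion for $F ^{\prime }$ (and a great deal more); your Case 2 is therefore circular. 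Second, the descent you propose for point (b) cannot close the circle. By Albert's theorem, a Kummer extension $K ^{\prime }(d ^{1/p})/K ^{\prime }$ corresponds to a cyclic extension of $K$ inside $K(p)$ only when $d$ lies in the eigenspace $K _{\varepsilon } = \{r \in K(\varepsilon ) ^{\ast }\colon \varphi (r)r ^{-s} \in K(\varepsilon ) ^{\ast p}\}$; for $d$ outside this eigenspace --- and $c$ is precisely such an element, which is why $F/K$ is not Galois --- the extension has no counterpart in $K(p)$, so the hypothesis that finite extensions of $K$ in $K(p)$ are totally ramified imposes no constraint on it and no contradiction can be extracted. (Your point (a) suffers from a similar defect: $p \notin P(\widehat K)$ does not in general pass to an extension of degree prime to $p$, and neither is it clear that $K(\varepsilon )$ inherits the $p$-quasilocal property, since $K(\varepsilon ) \not\subseteq K(p)$.)

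The missing idea is that the inertial and immediate alternatives must be excluded by exploiting the hypothesis that $(M/K, \psi , c)$ is a \emph{division} algebra --- a hypothesis your argument uses only to guarantee $[M ^{\prime }F ^{\prime }\colon M ^{\prime }] = p$. The paper proceeds as follows: by (2.3) and Lemma 3.3 (iii), $MF(\varepsilon )/M(\varepsilon )$ is totally ramified, inertial or immediate. If $F(\varepsilon )/K(\varepsilon )$ is inertial, Hensel's lemma gives $\nabla _{0}(K(\varepsilon )) \subseteq N(F(\varepsilon )/K(\varepsilon ))$; if it is immediate, the same inclusion is derived by a norm computation in the style of (3.1) and Lemma 3.9, using statement (4.4) and the $\mathbb Z[\mathcal{G}(M(\varepsilon )/K(\varepsilon ))]$-module structure of $M _{\varepsilon }$ together with the assumption that finite extensions of $K$ in $K(p)$ are totally ramified. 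In either case the norm criterion for cyclic algebras, combined with the corestriction identity of Lemma 4.2, forces $(M/K, \psi , c) \notin d(K)$, a contradiction. Without an argument of this kind, your Case 2 does not go through.
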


\begin{proof}
We retain notation as in the proof of Lemma 4.2. In view of Kummer
theory, there is nothing to prove in case $\varepsilon \in K$, so we
assume that $\varepsilon \notin K$. Then it follows from Lemma 3.3
(iii) that $R _{\varepsilon } \subseteq \nabla _{0}(R(\varepsilon
))R(\varepsilon ) ^{\ast p}$, for each $R \in I(K(p)/K)$. Applying
Albert's theorem, Lemma 3.3 and \cite{Ti}, Theorems~2.5 and 3.2, as
in the proof of Lemma 4.2, one obtains the following result:
\par
\medskip
(4.4) The $M(\varepsilon )$-algebra $A _{\varepsilon }(r, c;
M(\varepsilon ))$ lies in $d(M(\varepsilon ))$ if and only if
\par\noindent
$A _{\varepsilon }(N _{\varepsilon }^{M(\varepsilon )}(r), c;
K(\varepsilon )) \in d(K(\varepsilon ))$; equivalently, $r \in
N(FM(\varepsilon )/M(\varepsilon ))$ if and only if
$N_{K(\varepsilon )}^{M(\varepsilon )}(r) \in N(F(\varepsilon
)/K(\varepsilon ))$.
\par
\medskip\noindent
We prove that $MF/M$ is totally ramified by assuming the opposite.
In view of (2.3) and Lemma 3.3 (iii), this requires that $MF/M$ is
inertial or immediate, and since $m \mid (p - 1)$, $MF(\varepsilon
)/M(\varepsilon )$ must be subject to the same alternative. It
follows from the Henselity of $v _{K(\varepsilon )}$ that if
$F(\varepsilon )/K(\varepsilon )$ is inertial, then $I
_{0}(K(\varepsilon )) \subseteq N(F(\varepsilon )/K(\varepsilon ))$.
Applying now \cite{P}, Sect. 15.1, Proposition~b, one concludes that
$(M _{1}/M, \psi _{1} , c) \notin d(K)$. Since $(M/K, \psi , c) \in
d(K)$, this contradicts Lemma 4.2, and thereby proves that
$MF(\varepsilon )/M(\varepsilon )$ is not inertial. The final step
towards the proof of Lemma 4.3 relies on the fact that $M
_{\varepsilon }$ is a module over the integral group ring $\mathbb Z
[\mathcal{G}(M(\varepsilon )/K(\varepsilon ))]$. In view of (4.4),
this ensures that $\theta (r)r ^{-1} \in N(MF(\varepsilon
)/M(\varepsilon ))$ whenever $\theta \in \mathcal{G}(M(\varepsilon
)/K(\varepsilon ))$ and $r \in M _{\varepsilon }$. Assuming now that
$MF/M$ is immediate (or equivalently, that $F/K$ is immediate),
using the condition on the finite extensions of $K$ in $K(p)$, and
arguing as in the proof of (3.1) and Lemma 3.9, one obtains from
this result that $\nabla _{0}(K(\varepsilon )) \subseteq
N(F(\varepsilon )/K(\varepsilon ))$. This, however, contradicts the
fact that $(M/K, \psi , c) \in d(K)$, so Lemma 4.3 is proved.
\end{proof}

\medskip
\begin{rema}
Assume that $(K, v)$ is a Henselian field, such that char$(K) = p$,
$v(K)/pv(K)$ is of order $p$ and finite extensions of $K$ in $K(p)$
are totally ramified. Then, by the proofs of \cite{TY}, Lemmas~2.2,
3.2 and 3.3, the assertion of Lemma 4.3 remains valid without the
assumption that $K$ is $p$-quasilocal (when $K _{\rm sep}$ is
replaced by its algebraic closure). Hence, by the proof of
\cite{TY}, Proposition~3.2, $d(K)$ does not contain cyclic
$K$-algebras of index $p$. This, combined with \cite{A2}, Ch. VII,
Theorem~28, implies Br$(K) _{p} = \{0\}$.
\end{rema}

\medskip
Let now $(K, v)$ be a Henselian $p$-quasilocal with char$(K) = 0$
and char$(\widehat K) = p$, and suppose that $p \in P(K)$,
$v(K)/pv(K)$ is of order $p$ and finite extensions of $K$ in $K(p)$
are defectless. As noted at the beginning of this Section, then
$r(p)_{K} \ge 2$. Assume further that Br$(K) _{p} \neq \{0\}$ and
fix a primitive $p$-th root of unity $\varepsilon \in K _{\rm sep}$.
Since $K$ is $p$-quasilocal, this implies the existence of a cyclic
algebra $D \in d(K)$ of index $p$. As in the proofs of Lemmas 4.2
and 4.3, it is seen that $D \otimes _{K} K(\varepsilon )$ is
$K(\varepsilon )$-isomorphic to $A _{\varepsilon }(a, b;
K(\varepsilon ))$, for some $a \in K ^{\ast }$, $b \in K
_{\varepsilon }$. In addition, it turns out that, by the proof of
Albert's cyclicity criterion for an algebra $\Delta \in d(K)$ of
index $p$ (see \cite{P}, Sect. 15.5, or \cite{Ch8}, (3.3)) that if
$A _{\varepsilon }(a, b; K(\varepsilon )) \cong A _{\varepsilon }(a;
b _{0}; K(\varepsilon ))$, for some $b _{0} \in \nabla
_{0}(K(\varepsilon ))$, then there exists $b _{0} ^{\prime } \in K
_{\varepsilon }$, such that $A _{\varepsilon }(a, b _{0} ^{\prime };
K(\varepsilon )) \cong A _{\varepsilon }(a, b; K(\varepsilon ))$ and
$v _{K(\varepsilon )}(b _{0} ^{\prime } - 1) \ge v _{K(\varepsilon
)}(b _{0} - 1)$. Considering now $A _{\varepsilon }(a, b;
K(\varepsilon ))$ as in the proof of \cite{TY}, Proposition~3.3 (see
also \cite{Schil}, Ch. 2, Lemma~19), and using Lemma 4.3 and the
inequality $r(p)_{K} \ge 2$, one obtains the following result:
\par
\medskip
(4.5) The $K$-algebra $D$ is defectless.
\par
\medskip\noindent

\par
\medskip
We are now in a position to complete the proof of Theorem 1.1.
Statements (4.1), (1.4) and (4.5), combined with Lemma 3.3 (ii) and
Remark 3.4, as well as with Remark 4.4 and the observation preceding
the statement of Lemma 4.1, prove Theorem 1.1 (iii). The conclusion
of Theorem 1.1 (i) follows from Lemma 3.3, Remark 3.4, the comment
on (4.3) and the pointed observation. It remains for us to prove
Theorem 1.1 (ii). Suppose that $(K, v)$ satisfies the conditions of
Lemma 4.1. Then $K$ has an immediate extension $I _{1}$ in $K(p)$ of
degree $p$. At the same time, Lemma 3.3 (ii), combined with (2.4)
(i), (4.3), (4.5) and Remark 4.4, indicates that $(R, v _{R})$
satisfies the conditions of Lemma 4.1 whenever $R \in I(K(p)/K)$ and
$[R\colon K] \in \mathbb N$. Therefore, one proves without
difficulty by induction on $n$ the existence of a unique sequence $I
_{n}$, $n \in \mathbb N$, of subfields of $K _{\rm sep}$, such that
$I _{1} = I$, $I _{n} \subset I _{n+1}$, $[I _{n+1}\colon I _{n}] =
p$, and $I _{n+1}/I _{n}$ is immediate, for every index $n$. In view
of Galois theory, this implies that $I _{n}/K$ is cyclic and
immediate with $[I _{n}\colon K] = p ^{n}$ whenever $n \in \mathbb
N$, and the union $I _{\infty } = \cup _{n=1} ^{\infty } I _{n}$ is
the unique immediate $\mathbb Z_{p}$-extension of $K$ in $K _{\rm
sep}$. This, combined with (4.1), (4.3) and Theorem 1.1 (iii),
yields the alternative of Theorem 1.1 (ii) in the case of
char$(\widehat K) = p$. Thus Theorem 1.1 is proved.

\begin{rema}
The proof of Theorem 1.1 is considerably easier in the special case
where $K _{\rm sep} = K(p)$, since then $K$ contains a primitive
$p$-th root of unity or char$(K) = p$, which simplifies the
consideration of the structure of Br$(K) _{p}$ (see (4.1) and Remark
3.4). In addition, when char$(\widehat K) = p$, (4.5) can be
directly deduced from \cite{TY}, Theorem~3.1.
\end{rema}

\section{Brauer groups of Henselian PQL-fields with totally
indivisible value groups}

The purpose of this Section is to describe the isomorphism classes
of several major types of valued PQL-fields considered in this
paper. Our first result is particularly useful in the case of
quasilocal fields:

\medskip
\begin{prop}
Let $(K, v)$ be a Henselian field, such that $v(K) \neq pv(K)$ and
{\rm Br}$(K) _{p} \neq \{0\}$, for some $p \in \mathbb P$. Suppose
further that finite extensions of $K$ are $p$-quasilocal. Then $p
\in P(K)$ and {\rm Br}$(K) _{p} \cong \mathbb Z(p ^{\infty })$.
Moreover, every $D \in d(K)$ of $p$-primary index is a cyclic
$K$-algebra.
\end{prop}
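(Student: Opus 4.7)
The plan is to extract the structure of $\mathrm{Br}(K)_p$ from Theorem~1.1(i), descending to $K(\varepsilon)$ when necessary to avoid the exceptional case of that theorem, and then to split every $D \in d(K)$ of $p$-primary index via the $\mathbb{Z}_p$-extension furnished by Theorem~1.1(ii), using the restriction-corestriction formula (1.1) together with the divisibility of $\mathrm{Br}(K)_p$.

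First I would establish $p \in P(K)$. If on the contrary $K(p) = K$, then $K$ admits no finite separable extension of degree divisible by $p$ (by a Sylow argument on a Galois closure when $\mathrm{char}(K) \neq p$, or from the Artin-Schreier description of $\mathrm{Br}(K)[p]$ when $\mathrm{char}(K) = p$), so every $A \in d(K)$ has index coprime to $p$, contradicting $\mathrm{Br}(K)_p \neq \{0\}$. Next I would show $\mathrm{Br}(K)_p \cong \mathbb{Z}(p^\infty)$. Theorem~1.1(i), applied to $K$, yields this directly unless we are in the exceptional configuration $\mathrm{char}(K) \neq p$, $r(p)_K = 1$, $\varepsilon \notin K$ (with $\varepsilon$ a primitive $p$-th root of unity). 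In that case I pass to $K(\varepsilon)$, a finite $p$-quasilocal extension of $K$ of degree $m \mid (p-1)$: coprimality of $m$ and $p$ renders $\mathrm{Cor}\circ\mathrm{Res}=m$ an automorphism of every $p$-primary abelian group, so $\mathrm{Res}$ is injective on $p$-components and $\mathrm{Br}(K(\varepsilon))_p \neq \{0\}$; the natural injection $v(K)/pv(K) \hookrightarrow v(K(\varepsilon))/pv(K(\varepsilon))$ (obtained from the vanishing of the $p$-primary torsion of $v(K(\varepsilon))/v(K)$) ensures $v(K(\varepsilon)) \neq pv(K(\varepsilon))$; and since $\varepsilon \in K(\varepsilon)$, Theorem~1.1(i) applies to $K(\varepsilon)$ without exception, giving $\mathrm{Br}(K(\varepsilon))_p \cong \mathbb{Z}(p^\infty)$. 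Combined with the divisibility of $\mathrm{Br}(K)_p$ (Lemma~3.3(i)), this realizes $\mathrm{Br}(K)_p$ as a nontrivial divisible subgroup of $\mathbb{Z}(p^\infty)$, and hence forces $\mathrm{Br}(K)_p \cong \mathbb{Z}(p^\infty)$.

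For the cyclicity assertion, fix $D \in d(K)$ with $\mathrm{ind}(D) = p^n$. Since $K$ is $p$-quasilocal and $\mathrm{Br}(K)_p \cong \mathbb{Z}(p^\infty)$, the $p$-primary index-exponent equality from \cite{Ch5}, I, Theorem~3.1, gives $\mathrm{exp}(D) = p^n$, so $[D]$ has order exactly $p^n$ in $\mathrm{Br}(K)_p$. Theorem~1.1(ii), supplemented by (1.4) when $\mathrm{char}(\widehat{K}) \neq p$ and by Corollary~2.7 when $\mathrm{char}(\widehat{K}) = p$, provides a $\mathbb{Z}_p$-extension $I_\infty \subseteq K(p)$ of $K$; let $I_n$ denote its unique subfield with $[I_n : K] = p^n$, which is cyclic over $K$. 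By (1.1) and the divisibility of $\mathrm{Br}(K)_p$, $\mathrm{Cor}_{I_n/K}$ is a bijection $\mathrm{Br}(I_n)_p \to \mathrm{Br}(K)_p$, and the identity $\mathrm{Cor} \circ \mathrm{Res} = p^n$ then identifies $\mathrm{Ker}(\mathrm{Res}_{K/I_n})$ with the $p^n$-torsion subgroup of $\mathrm{Br}(K)_p$. Since $[D]$ lies in this subgroup, $I_n$ splits $D$; as $[I_n : K] = p^n = \mathrm{ind}(D)$ and $I_n/K$ is cyclic, $I_n$ embeds in $D$ as a maximal subfield, exhibiting $D$ as a cyclic $K$-algebra.

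The delicate point is certifying the existence of a $\mathbb{Z}_p$-subextension of $K(p)/K$ under alternative $(\beta)$ of Theorem~1.1(ii). The key observation is that $\mathcal{G}(K(p)/K)$ must be infinite: otherwise $K(p)/K$ would be a finite extension and (1.1) combined with divisibility would give $\mathrm{Br}(K(p))_p \cong \mathrm{Br}(K)_p \cong \mathbb{Z}(p^\infty) \neq \{0\}$, contradicting $p \notin P(K(p))$. In $\mathrm{char}(\widehat{K}) \neq p$ the Demushkin/pro-cyclic dichotomy of (1.4) then supplies a $\mathbb{Z}_p$-quotient of $\mathcal{G}(K(p)/K)$, while in $\mathrm{char}(\widehat{K}) = p$ Corollary~2.7 (whose hypothesis that $v(K)/pv(K)$ is noncyclic is forced by Theorem~1.1(iii) under alternative $(\beta)$ with $\mathrm{Br}(K)_p \neq \{0\}$) identifies $\mathcal{G}(K(p)/K)$ with $\mathbb{Z}_p^2$, from which a $\mathbb{Z}_p$-quotient is immediate.
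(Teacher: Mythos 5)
Your proof of the key assertion $p \in P(K)$ contains a genuine gap. You claim that $K(p)=K$ would imply that $K$ admits no finite separable extension of degree divisible by $p$, justified by ``a Sylow argument on a Galois closure.'' This implication is false: if $L/K$ is separable with $p \mid [L:K]$ and $M/K$ is its Galois closure with group $G$, a Sylow $p$-subgroup of $G$ only produces a $p$-extension of some \emph{intermediate} field $M^{P}$, not a degree-$p$ subextension of $K(p)/K$; for the latter one needs a normal subgroup of index $p$ in $G$, i.e.\ a nontrivial pro-$p$ quotient of $\mathcal{G}_{K}$, which is precisely what is in question. (Fields with $P(K)=\emptyset$ but $\Pi(K)$ large, and indeed with nontrivial $p$-primary Brauer group, are not excluded by any general principle; in characteristic different from $p$ the implication ${\rm Br}(K)_{p}\neq\{0\}\Rightarrow p\in P(K)$ is not a formality.) This step is in fact the main content of the paper's proof: when ${\rm char}(K)=p$ it follows from Lemma~2.2 (alternative (ii) there forces ${\rm char}(K)=0$, so a totally ramified degree-$p$ extension in $K(p)$ exists); when ${\rm char}(K)\neq p$ the paper invokes Merkur'ev's theorem to produce $D\in d(K)$ of index $p$, chooses $\theta$ with $v(\theta)\notin pv(K)$ so that $L_{\theta}=K(\theta^{1/p})$ has degree $p$, uses the $p$-quasilocality of the \emph{finite extension} $K(\varepsilon)$ to embed $L_{\theta}(\varepsilon)$ into $D\otimes_{K}K(\varepsilon)$, hence $L_{\theta}$ into $D$, and concludes from Albert's criterion that $D$ is cyclic --- its cyclic degree-$p$ maximal subfield then witnesses $p\in P(K)$. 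Your proposal has no substitute for this argument, and the same false implication reappears in your final paragraph, where you deduce ${\rm Br}(K(p))_{p}=\{0\}$ from $p\notin P(K(p))$.

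The remaining parts are essentially sound and partly diverge from the paper in an interesting way. For ${\rm Br}(K)_{p}\cong\mathbb{Z}(p^{\infty})$ you descend to $K(\varepsilon)$ to escape the exceptional case of Theorem~1.1\,(i); the paper instead descends to the fixed field $K_{p}$ of a Sylow pro-$p$-subgroup (statement (5.1)), but both exploit the same mechanism (a prime-to-$p$ extension containing $\varepsilon$, injectivity of restriction on $p$-components, and divisibility of ${\rm Br}(K)_{p}$ from Lemma~3.3\,(i)), and your injectivity of $v(K)/pv(K)\to v(K(\varepsilon))/pv(K(\varepsilon))$ is correctly argued. Your cyclicity argument --- splitting a $D$ of index $p^{n}$ by the degree-$p^{n}$ layer of a $\mathbb{Z}_{p}$-subextension of $K(p)/K$, using (1.1) and ${\rm Cor}\circ{\rm Res}=p^{n}$ to identify ${\rm Br}(I_{n}/K)\cap{\rm Br}(K)_{p}$ with the $p^{n}$-torsion --- is a clean, self-contained alternative to the paper's outright citation of \cite{Ch5}, I, Theorem~3.1\,(i), though you should note that (1.4) requires $\varepsilon\in K$ (the case $\varepsilon\notin K$, ${\rm char}(\widehat K)\neq p$ is covered by Remark~3.4 instead), and the whole construction presupposes the very fact $p\in P(K)$ whose proof is the missing ingredient.
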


\begin{proof} Fix Sylow pro-$p$-subgroups $G _{p}$ and $\widetilde G _{p}$
of $\mathcal{G}_{K}$ and $\mathcal{G}_{\widehat K}$, respectively,
and denote by $K _{p}$ and $\widehat K _{p}$ the corresponding fixed
fields. Our choice of $K _{p}$ ensures that $p \dagger [K ^{\prime
}\colon K]$ whenever $K ^{\prime } \in I(K _{p}/K)$ and $[K ^{\prime
} \colon K] \in \mathbb N$, so \cite{Ch1}, (1.2) and Remark~2.2, and the
results of \cite{P}, Sect. 13.4, imply the following:
\par
\medskip
(5.1) Br$(K _{p}/K) \cap {\rm Br}(K) _{p}$ $= \{0\}$ and the natural
embedding of $K$ into $K _{p}$ induces a group isomorphism
$v(K)/pv(K) \cong v(K _{p})/pv(K _{p})$.
\par
\medskip\noindent
Since Br$(K) _{p} \neq \{0\}$ and, by \cite{Ch5}, I, Lemma~8.3, $K
_{p}$ is $p$-quasilocal, it can be easily deduced from (5.1), Lemma
3.3 (i) and Theorem 1.1 that Br$(K _{p}) \cong \mathbb Z(p ^{\infty
}) \cong {\rm Br}(K)$. In view of \cite{Ch5}, I, Theorem~3.1 (i), it
remains to be seen that $p \in P(K)$. When $p = {\rm char}(K)$, this
follows from Lemma 2.2, so we assume further that $p \neq {\rm
char}(K)$. As Br$(K) _{p} \neq \{0\}$, Theorem~2 of \cite{M}, Sect.
4, requires the existence of an algebra $D \in d(K)$ of index $p$.
Fix an element $\theta \in K ^{\ast }$ so that $v(\theta ) \notin
pv(K)$ and denote by $L _{\theta }$ some extension of $K$ in $K
_{\rm sep}$ generated by a $p$-th root of $\theta $. Also, let
$\varepsilon \in K _{\rm sep}$ be a primitive $p$-th root of unity.
Since $[K(\varepsilon )\colon K] \mid (p - 1)$, we have $D \otimes
_{K} K(\varepsilon ) \in d(K(\varepsilon ))$ and $[L _{\theta
}\colon K] = [L _{\theta }(\varepsilon )\colon K(\varepsilon )] =
p$. Observing also that $L _{\theta }(\varepsilon )$ is cyclic over
$K(\varepsilon )$, one obtains from the $p$-quasilocal property of
$K(\varepsilon )$ that $L _{\theta }(\varepsilon )$ embeds in $D
\otimes _{K} K(\varepsilon )$ as a $K(\varepsilon )$-subalgebra.
Thus it becomes clear that $L _{\theta }$ is isomorphic to a
$K$-subalgebra of $D$, so it follows from Albert's criterion (cf.
\cite{P}, Sect. 15.5) that $D$ is a cyclic $K$-algebra. This shows
that $p \in P(K)$, so Proposition 5.1 is proved.
\end{proof}

\begin{rema}
In the setting of Proposition 5.1, let $v(K)/pv(K)$ be of order $p$,
and for each finite extension $L$ of $K$ in $K _{\rm sep}$, put $X
_{p}(L) = \{\chi \in C_{L}\colon \ p\chi = 0\}$, where $C _{L}$ is
the continuous character group of $\mathcal{G}_{L}$. Denote by
$\kappa _{L}$ the canonical pairing $X _{p}(L) \times v(K)/pv(K) \to
_{p}{\rm Br}(L)$ (see, e.g., the proof of \cite{Ch5}, I, Lemma~1.1),
and by cor$_{L/K}$ the homomorphism of $X _{p}(L)$ into $X _{p}(K)$
induced by the corestriction map $C_{L} \to C_{K}$. It is clear from
Proposition 5.1 that $\rho _{K/L}$ maps Br$(K) _{p}$ surjectively
upon Br$(L) _{p}$, and by a well-known projection formula (stated in
\cite{S2}, page 205, for a proof, see, e.g., \cite{We},
Proposition~4.3.7), the compositions $\kappa _{K} \circ {\rm
cor}_{L/K}$ and Cor$_{L/K} \circ \kappa _{L}$ coincide. Choose as we
can (by Remark 4.5) $L$ so that $p \dagger [L\colon K]$ and there is
a field $I _{L} \in I(L(p)/L)$, such that $[I _{L}\colon L] = p$ and
$v(I _{L}) = v(L)$. Then it follows from \cite{Ti}, Theorem~3.2, and
the noted property of $\rho _{K/L}$ that Cor$_{L/K}$ induces an
isomorphism Br$(L) _{p} \cong {\rm Br}(K) _{p}$. Fix a generator
$\sigma _{L}$ of $\mathcal{G}(I _{L}/L)$ and an element $\pi \in K
^{\ast }$ so that $v(\pi ) \notin pv(K)$. The preceding observations
show that Cor$_{L/K}([I _{L}/L, \sigma _{L}, \pi )]) \in {\rm Br}(I
_{K}/K)$, for some $I _{K} \in I(K(p)/K)$ with $[I _{K}\colon K] =
p$ and $v(I _{K}) = v(K)$. Hence, by Lemma 3.3 (ii), $I _{L} = I
_{K}L$, which yields $p \in P(K)$ independently of the proof of
(4.5).
\end{rema}

It is known (and easy to see, e.g., from Scharlau's generalization
of Witt's decomposition theorem or from \cite{Ch5}, I,
Corollary~8.5) that every divisible subgroup $T$ of $\mathbb
Q/\mathbb Z$ is isomorphic to Br$(K _{T})$, provided that $(K, v)$
is a Henselian discrete valued field with a quasifinite residue
field and $K _{T}$ is the compositum of the inertial extensions of
$K$ in $K _{\rm sep}$ of degrees not divisible by any $p \in \mathbb
P$, for which $T _{p} \neq \{0\}$. This, combined with our next
result, describes the isomorphism classes of Brauer groups of
Henselian quasilocal fields with totally indivisible value groups.
\par
\medskip
\begin{coro} Let $(K, v)$ be a Henselian quasilocal field,
such that $v(K)$ is totally indivisible. Then $K$ is nonreal,
$\widehat K$ is perfect, every $D \in d(K)$ is cyclic and {\rm
Br}$(K)$ is divisible and embeddable in $\mathbb Q /\mathbb Z$. Moreover,
$P(K)$ contains every $p \in \mathbb P$, for which {\rm Br}$(K) _{p} \neq
\{0\}$.
\end{coro}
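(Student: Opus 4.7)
The plan is to extract Corollary 5.3 as a multi-prime consequence of Proposition 5.1, supplemented by two short side arguments for the perfectness of $\widehat K$ and the nonreality of $K$.

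For each prime $p$, I would first verify the hypotheses of Proposition 5.1 for $K$: total indivisibility of $v(K)$ gives $v(K) \neq pv(K)$, and the quasilocality of $K$ ensures that every finite extension of $K$ is PQL, hence $p$-quasilocal. Thus whenever ${\rm Br}(K)_{p} \neq \{0\}$, Proposition 5.1 delivers $p \in P(K)$, ${\rm Br}(K)_{p} \cong \mathbb Z(p^{\infty})$, and the cyclicity of every $D \in d(K)$ of $p$-primary index. The final clause of the corollary is then immediate, and the statement ${\rm Br}(K) = \bigoplus_{p} {\rm Br}(K)_{p}$ is a direct sum of Pr\"ufer groups $\mathbb Z(p^{\infty})$, so it is divisible and embeds in $\bigoplus_{p} \mathbb Z(p^{\infty}) = \mathbb Q/\mathbb Z$.

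For the cyclicity of an arbitrary $D \in d(K)$, I would invoke its primary decomposition $D \cong D_{p_{1}} \otimes_{K} \cdots \otimes_{K} D_{p_{k}}$ in which $D_{p_{i}}$ is of $p_{i}$-primary index: each $D_{p_{i}}$ is cyclic by the preceding paragraph, and the classical fact that a tensor product of cyclic $K$-algebras of pairwise coprime indices is itself cyclic (the compositum of the defining cyclic subfields being a cyclic Galois extension of $K$ of the right total degree, which then serves as a strictly maximal subfield of the tensor product) completes this step.

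Perfectness of $\widehat K$ is automatic when ${\rm char}(\widehat K) = 0$; in the remaining case ${\rm char}(\widehat K) = p > 0$, Lemma 3.3(iii) applies directly to $K$ and yields it. The main obstacle is the nonreality assertion, which I would handle by contradiction. Assuming $K$ is formally real, $-1$ is not a sum of two squares in $K$, so the quaternion $K$-algebra $(-1,-1/K)$ is a non-trivial division algebra; by the first paragraph this forces ${\rm Br}(K)_{2} \cong \mathbb Z(2^{\infty})$, a non-trivial divisible group. On the other hand, the $2$-quasilocality of the formally real field $K$ forces $\mathcal{G}(K(2)/K)$ to have order $2$, by \cite{Ch5}, I, Lemma~3.5 (the fact invoked in the proof of Lemma 3.3(i)). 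Since every class in ${\rm Br}(K)_{2}$ is split by some finite $2$-extension of $K$, which lies inside $K(2)$, we get ${\rm Br}(K)_{2} = {\rm Br}(K(2)/K)$, a relative Brauer group of a Galois extension of degree $2$, which is a group of exponent dividing $2$. A divisible group of exponent $2$ is trivial, contradicting $[(-1,-1/K)] \neq 0$. Hence $K$ is nonreal, finishing the proof.
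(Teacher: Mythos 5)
Your proposal reproduces the paper's own argument for almost every clause: the paper likewise obtains the divisibility of ${\rm Br}(K)$, its embeddability in $\mathbb Q/\mathbb Z$ and the final assertion about $P(K)$ directly from Proposition 5.1, reduces the cyclicity of an arbitrary $D \in d(K)$ to the $p$-primary case exactly as you do (that reduction is what the citation of \cite{P}, Sect.~15.3, stands for), and gets the perfectness of $\widehat K$ from Lemma 3.3. The only point where you genuinely diverge is the nonreality of $K$, which the paper dispatches by citing Lemma 3.3, the hypothesis on $v(K)$ and \cite{Ch5}, I, Lemma~3.6, and which you replace by a Brauer-theoretic contradiction.

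That replacement is viable but contains one step you present as self-evident when it is not: the claim that every class in ${\rm Br}(K)_{2}$ is split by a finite extension lying inside $K(2)$, i.e., that ${\rm Br}(K)_{2} = {\rm Br}(K(2)/K)$. A class of $2$-power exponent does have $2$-power index, but the maximal subfields of the underlying division algebra need not be contained in $K(2)$ (their Galois closures may have degree divisible by odd primes). By inflation-restriction, the assertion is equivalent to ${\rm Br}(K(2))_{2} = \{0\}$, the vanishing of the $2$-primary Brauer group of a $2$-closed field, which in characteristic $\neq 2$ rests on Merkurjev's theorem that $_{2}{\rm Br}$ is generated by quaternion classes. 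So either cite that explicitly, or patch the step with the structure theory already available: since ${\rm Br}(K)_{2} \cong \mathbb Z(2^{\infty})$ contains a class of exponent $4$ and $K$ is quasilocal, the unique quadratic extension $K(i) = K(2)$ embeds in the corresponding division algebra, whence ${\rm Br}(K(i))_{2} \neq \{0\}$; by \cite{Ch5}, I, Theorem~3.1, $K(i)$ then carries a division algebra of index $2$, whose maximal subfield would be a quadratic extension of the $2$-closed field $K(i)$ --- a contradiction. Alternatively, and closer to what the paper's citation is doing, one can avoid Brauer groups entirely: $\mathcal{G}(K(2)/K)$ of order $2$ forces $|K^{\ast}/K^{\ast 2}| = 2$ (Kummer theory, ${\rm char}(K)=0$ for formally real $K$), while $2$-indivisibility of $v(K)$ yields $\pi$ with $v(\pi) \notin 2v(K)$, so that $1$, $-1$, $\pi$, $-\pi$ occupy four distinct square classes. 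Apart from this one under-justified step, the proof is correct.
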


\begin{proof}
Our concluding assertion and the one concerning Br$(K)$ follow from
Proposition 5.1. The statements that $\widehat K$ is perfect and $K$
is nonreal are implied by Lemma 3.3, the assumption on $v(K)$ and
\cite{Ch5}, I, Lemma~3.6. Note finally that all $D \in d(K)$ are
cyclic. Since it suffices to prove this only in the special case
where $[D] \in {\rm Br}(K) _{p}$, for some $p \in \mathbb P$ (see
\cite{P}, Sect. 15.3), the assertion can be viewed as a consequence
of Proposition 5.1.
\end{proof}

\smallskip
\begin{coro}
Under the hypotheses of Corollary 5.3, let $L/K$ be a finite abelian
extension and $L _{0}$ the maximal extension of $K$ in $L$, for
which $[L _{0}\colon K]$ is not divisible by any $p \in \mathbb P$
with Br$(K) _{p} = \{0\}$. Then $E ^{\ast }/N(L/K) \cong
\mathcal{G}(L _{0}/K)$.
\end{coro}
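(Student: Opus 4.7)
The plan is to split $K^{\ast}/N(L/K)$ by primes and identify each $p$-component through $(1.2)$(ii), using the strong structural information about $\mathrm{Br}(K)$ supplied by Corollary 5.3. Note first that the statement must read $K^{\ast}/N(L/K)$ (the symbol $E$ is a typo). Since $K$ is a PQL-field and $L/K$ is a finite abelian extension, the comment immediately following $(1.2)$, together with \cite{Ch6}, Lemma~2.1, gives a canonical decomposition
\[
K^{\ast}/N(L/K) \;\cong\; \prod_{\substack{p \in P(K)\\ p \mid [L:K]}} K^{\ast}/N(L_{p}/K),
\]
where $L_{p} = L \cap K(p)$ is the maximal $p$-subextension of $L/K$. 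The problem thus reduces to computing each factor $K^{\ast}/N(L_{p}/K)$.

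For every admissible $p$, the formula $(1.2)$(ii) identifies $K^{\ast}/N(L_{p}/K)$ with $\mathcal{G}(L_{p}/K)^{d(p)}$, where $d(p) = \dim_{\mathbb F_{p}} {_{p}\mathrm{Br}(K)}$. To pin down $d(p)$ I would invoke Corollary 5.3: since $v(K)$ is totally indivisible, $\mathrm{Br}(K)$ is divisible and embeds into $\mathbb Q/\mathbb Z$, so $\mathrm{Br}(K)_{p}$ embeds into $\mathbb Z(p^{\infty})$. In particular, the subgroup $_{p}\mathrm{Br}(K)$ of elements killed by $p$ is cyclic of order at most $p$, which forces $d(p) \in \{0,1\}$. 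Concretely, $d(p) = 0$ exactly when $\mathrm{Br}(K)_{p} = \{0\}$, in which case $N(L_{p}/K) = K^{\ast}$ (recovering the last clause of $(1.2)$(ii)); otherwise $d(p) = 1$ and $K^{\ast}/N(L_{p}/K) \cong \mathcal{G}(L_{p}/K)$.

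It remains to assemble these pieces and recognize the result as $\mathcal{G}(L_{0}/K)$. Since $L/K$ is abelian, $\mathcal{G}(L/K)$ factors as $\prod_{p} \mathcal{G}(L_{p}/K)$, with one factor for each $p \mid [L:K]$. The definition of $L_{0}$ as the maximal subextension of $L$ whose degree avoids every $p$ with $\mathrm{Br}(K)_{p} = \{0\}$, combined with the abelian decomposition, identifies $L_{0}$ with the compositum of those $L_{p}$ for which $\mathrm{Br}(K)_{p} \neq \{0\}$, whence
\[
\mathcal{G}(L_{0}/K) \;\cong\; \prod_{\substack{p:\,\mathrm{Br}(K)_{p}\neq\{0\}}} \mathcal{G}(L_{p}/K).
\]
Combining this with the decomposition above yields the asserted isomorphism. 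The only nontrivial input is the bound $d(p) \le 1$, and that is exactly where Corollary 5.3 does the work; everything else is routine bookkeeping with $(1.2)$ and abelian Galois theory.
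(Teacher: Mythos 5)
Your proof is correct and follows essentially the same route as the paper, which derives the corollary directly from (1.2)(ii), \cite{Ch6}, Lemma~2.1, and the structure of ${\rm Br}(K)_{p}$ (the paper cites Theorem 1.1 where you cite Corollary 5.3, but under these hypotheses the two supply the same input, namely $\dim_{\mathbb F_{p}}({}_{p}{\rm Br}(K)) \le 1$ with equality exactly when ${\rm Br}(K)_{p} \neq \{0\}$). You are also right that $E^{\ast}$ in the statement is a typo for $K^{\ast}$.
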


\begin{proof}
This follows at once from (1.2) (ii), Theorem 1.1 and \cite{Ch6},
Lemma~2.1.
\end{proof}

\medskip
\begin{coro}
An abelian torsion group $T$ with $T _{2} \neq \{0\}$ is isomorphic
to {\rm Br}$(K)$, for some Henselian {\rm PQL}-field $(K, v)$ such
that $v(K)$ is totally indivisible, if and only if $T$ is divisible
and $T _{2} \cong \mathbb Z(2 ^{\infty })$. When this holds, $T
\cong {\rm Br}(F)$, for some Henselian discrete valued {\rm
PQL}-field $(F, w)$.
\end{coro}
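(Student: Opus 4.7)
The plan is to prove the biconditional as two separate implications and to produce the realizing field for sufficiency with a discrete valuation.

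\emph{Necessity.} Given $(K,v)$ as in the hypothesis and $T = {\rm Br}(K)$, I would first invoke Lemma~3.3~(i) at every prime $p$: since $v(K)$ is totally indivisible, $v(K) \neq pv(K)$ for every $p$, and the PQL property of $K$ ensures $K$ is $p$-quasilocal for every $p$; hence ${\rm Br}(K)_{p}$ is divisible and so $T$ is divisible. For the $2$-component I would appeal to Theorem~1.1~(i): its exceptional clause requires the absence of a primitive $p$-th root of unity in $K$, which is impossible for $p = 2$ since $-1 \in K$; thus ${\rm Br}(K)_{2}$ is either $\{0\}$ or $\cong \mathbb{Z}(2^{\infty})$, and the hypothesis $T_{2} \neq \{0\}$ forces $T_{2} \cong \mathbb{Z}(2^{\infty})$.

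\emph{Sufficiency.} Given divisible $T$ with $T_{2} \cong \mathbb{Z}(2^{\infty})$, I would invoke (1.3)~(i) to find a quasilocal field $\Phi$ with ${\rm Br}(\Phi) \cong T$, $\Phi$ nonreal, and $\rho_{\Phi/\Phi'}$ surjective for every $\Phi' \in I(\Phi_{\rm sep}/\Phi)$. Then I would set $F = \Phi((t))$ with its $t$-adic discrete valuation $w$; this $(F,w)$ is complete (hence Henselian), $\widehat{F} = \Phi$, and $w(F) = \mathbb{Z}$ is totally indivisible. By (2.4)~(iii), the inertial subgroup ${\rm IBr}(F)$ is canonically isomorphic to ${\rm Br}(\Phi) \cong T$; to identify ${\rm Br}(F)$ with $T$, I would show that ${\rm Br}(F)/{\rm IBr}(F)$ is trivial by combining the Witt-type sequence for Henselian discretely valued fields (which expresses this quotient, away from ${\rm char}(\Phi)$, as a quotient of the character group of $\mathcal{G}_{\Phi}$) with the surjectivity of $\rho_{\Phi/\Phi'}$ and the surjectivity clause of~(1.1); at $p = {\rm char}(\Phi)$ I would appeal to Remark~4.4 after checking that finite extensions of $F$ in $F(p)$ are totally ramified.

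\emph{Main obstacle.} The principal difficulty is to verify that $F$ is itself PQL, not merely that its Brauer group has the required shape. For a finite extension $F'/F$ and a cyclic extension $L/F'$ whose degree divides the index of some $D \in d(F')$, I would decompose $L$ and $D$ into inertial and totally ramified parts using the structure theory of Henselian discrete valued fields (as in (2.4) and Remark~3.4). The inertial factor of $L$ embeds into the inertial part of $D$ via the PQL property of the residue field $\widehat{F'}$, which is a finite extension of $\Phi$ and hence PQL by quasilocality of $\Phi$; the totally ramified factor is controlled by $w(F')/pw(F') \cong \mathbb{Z}/p\mathbb{Z}$ being cyclic together with the surjectivity of scalar extensions over $\Phi$, which supplies the required norm generators. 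Piecing these embeddings together yields $L \hookrightarrow D$ and establishes the PQL property of $F$, completing the construction.
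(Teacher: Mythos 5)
Your necessity argument is correct and is essentially the paper's: divisibility of each ${\rm Br}(K)_{p}$ comes from Lemma~3.3~(i) (valid at every $p$ since a PQL-field is $p$-quasilocal for all $p$), and the exceptional clause of Theorem~1.1~(i) cannot occur at $p=2$ because $-1\in K$, so ${\rm Br}(K)_{2}\neq\{0\}$ forces ${\rm Br}(K)_{2}\cong\mathbb Z(2^{\infty})$. The paper disposes of sufficiency by citing \cite{Ch4}, Theorem~4.2, whereas you attempt an explicit construction; that construction is where the proof breaks down.

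The field $F=\Phi((t))$, with $\Phi$ taken from (1.3)~(i), is \emph{not} PQL, and the failure is visible already at $p=2$. Since $T_{2}\neq\{0\}$ we have ${\rm Br}(\Phi)_{2}\neq\{0\}$, so by (2.4)~(i), (iii) there is an inertial $D\in d(F)$ of index $2$. On the other hand $F(\sqrt t\,)/F$ is a cyclic totally ramified extension of degree $2$, hence lies in $I(F(2)/F)$. Any subfield $M$ of $D$ satisfies $v(M)\subseteq v(D)$, so $e(M/F)$ divides $e(D/F)=1$; thus the totally ramified field $F(\sqrt t\,)$ cannot embed in $D$, and $F$ is not $2$-quasilocal. (Equivalently: if $F$ were $2$-quasilocal, Lemma~3.3~(ii) and the argument above would force ${\rm Br}(\widehat F)_{2}=\{0\}$, contradicting $\widehat F=\Phi$.) The same obstruction occurs at every odd $p$ with $\mu_{p}\subset\Phi$ and ${\rm Br}(\Phi)_{p}\neq\{0\}$. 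Your intermediate claim that ${\rm Br}(F)/{\rm IBr}(F)$ is trivial is also false: in Witt's decomposition this quotient is governed by the character group of $\mathcal G_{\Phi}$ tensored with $w(F)/pw(F)$, not by the restriction maps $\rho_{\Phi/\Phi'}$ on Brauer groups, and the NSR cyclic algebra $(L/F,\sigma,t)$ attached to any inertial lift $L$ of a cyclic extension of $\Phi$ gives a nonzero non-inertial class. So surjectivity of $\rho_{\Phi/\Phi'}$ buys you nothing here.

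The structural point you are missing is that Theorem~1.1 forbids a Henselian PQL-field with $p$-indivisible value group from carrying its $p$-torsion Brauer classes inertially whenever $K(p)$ contains a totally ramified degree-$p$ extension (in particular whenever $\mu_{p}\subset K$): in that regime ${\rm Br}(K)_{p}$ must be $\{0\}$ or $\mathbb Z(p^{\infty})$ and is realized by cyclic algebras split by \emph{inertial} extensions (see (4.3)), with ${\rm Br}(\widehat K)_{p}=\{0\}$. The quasicyclic components of $T$ (in particular $T_{2}$) must therefore be produced from the character group of the residue field as in the construction preceding Corollary~5.3, while components $T_{p}$ of corank $\ge 2$ can only arise in the exceptional case of Theorem~1.1~(i) ($r(p)_{K}=1$, $\mu_{p}\not\subset K$), where Remark~3.4 lets ${\rm Br}(\widehat K)_{p}$ pass to ${\rm Br}(K)_{p}$. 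A correct construction (this is what \cite{Ch4}, Theorem~4.2 supplies) must mix these two mechanisms prime by prime; a single residue field carrying all of $T$ inertially cannot work.
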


\begin{proof} Theorem 3.1 shows that Br$(K) _{2} \cong \mathbb Z(2
^{\infty })$ whenever $(K, v)$ is a Henselian $2$-quasilocal field
with $v(K) \neq 2v(K)$ and Br$(K) _{2} \neq \{0\}$. This, combined
with \cite{Ch4}, Theorem~4.2, proves our assertion.
\end{proof}

\smallskip
Let $T$ be an abelian torsion group and $S _{0}(T) = \{\pi \in
\mathbb P\colon \ T _{\pi } = \{0\}\}$. Assume that $2 \in S
_{0}(T)$ and denote by $S _{1} (T)$ the set of those $p \in \mathbb
P \setminus S _{0}(T)$, for which $S _{0}(T)$ contains the prime
divisors of $p - 1$. Clearly, if $T \neq \{0\}$, then $S _{1}(T)$
contains the least element of $\mathbb P \setminus S _{0}(T)$. Using
Theorem 3.1, Lemma~3.5 (i) and \cite{Ch4}, Theorem~4.2, one obtains
the following results:
\par
\medskip
(5.2) (i) If $T$ is divisible with $T _{p} \cong \mathbb Z(p
^{\infty })$, for every $p \in S _{1}(T)$, then there exists a
Henselian PQL-field $(K, v)$, such that Br$(K) \cong T$, $v(K)$ is
totally indivisible and char$(\widehat K) = 0$;
\par
(ii) If Br$(F) \cong T$, for some Henselian PQL-field $(F, w)$ with
$w(F)$ totally indivisible, then $T _{p} \cong \mathbb Z(p ^{\infty
})$ and $F$ contains a primitive $p$-th root of unity, for each $p
\in S _{1}(T)$, $p \neq {\rm char}(\widehat F)$;
\par
(iii) The group $T$ satisfies the condition in (i) if and only if $T
\cong {\rm Br}(L)$, for some Henselian real-valued PQL-field $(L,
\omega )$.

\medskip
The conclusion of (5.2) (iii) is not necessarily true without the
condition that $\omega (L) \le \mathbb R$. Indeed, let $T$ be an
abelian torsion group with $T _{2} = \{0\}$, $S _{\pi }(T) = S
_{1}(T) \setminus \{\pi \}$, for some $\pi \in S _{1}(T)$, and $S
^{\prime }_{\pi }(T)$ the set of those $p ^{\prime } \in \mathbb P
\setminus S _{0}(T)$, for which the coset of $\pi $ in $\mathbb Z/p
^{\prime }\mathbb Z = \mathbb F _{p'}$ has order in $\mathbb F _{p'}
^{\ast }$ not divisible by any $p \in \mathbb P\setminus S _{\pi
}(T)$. Fix an algebraic closure $\overline {\mathbb Q _{\pi }}$ of
the field $\mathbb Q _{\pi }$ of $\pi $-adic numbers, and for each
subset $\Pi $ of $\mathbb P$, put $\Pi ^{\prime } = \Pi \cup \{\pi
\}$, $\overline {\Pi } = \mathbb P \setminus \Pi $, and denote by
$U _{\Pi }$ the compositum of inertial extensions of $\mathbb Q
_{\pi }$ in $\overline {\mathbb Q _{\pi }}$ of degrees not divisible
by any $\bar {\pi } \in \overline {\Pi }$. It is well-known that $U
_{\pi }/\mathbb Q _{\pi }$ is a Galois extension with $\mathcal{G}(U
_{\pi }/\mathbb Q _{\pi })$ isomorphic to the topological group
product $\prod _{p \in \Pi } \mathbb Z_{p}$. This implies that $U
_{\pi }(p ^{\prime })/\mathbb Q _{\pi }$ is Galois, for each $p
^{\prime } \in \mathbb P$. Observing also that $\mathcal{G}(U _{\Pi
}/\mathbb Q _{\pi })$ is a projective profinite group \cite{G},
Theorem~1, one concludes that, for each $\Pi \subseteq \mathbb P$,
there exists a field $F _{\Pi } \in I(U _{\Pi '} (\pi )/U _{\Pi })$,
such that $U _{\Pi '}.F _{\Pi } = U _{\Pi '}(\pi )$ and $U _{\Pi '}
\cap F _{\Pi } = U _{\Pi }$. This observation, combined with (4.1),
Proposition 2.5 and Theorem 1.1, enables one to obtain the following
result (arguing in the spirit of the proof of (5.2) and Corollary
5.5):
\par
\medskip
(5.3) $T \cong {\rm Br}(K)$, for some Henselian PQL-field $(K, v)$,
such that $v(K)$ is totally indivisible and char$(\widehat K) = \pi
$, if and only if $T$ is divisible with $T _{p'} \cong \mathbb Z(p
^{\prime \infty })$, for every $p ^{\prime } \in S ^{\prime }_{\pi
}$. When $T$ has the noted properties and $T _{\pi } \not\cong
\mathbb Z(\pi ^{\infty })$, char$(K) = 0$ and $G(K) = \pi G(K)$.
Moreover $(K, v)$ can be chosen so that $K _{G(K)} = F _{S _{\pi
}(T)}$ and $\hat v _{G(K)}$ extends the natural valuation of
$\mathbb Q _{\pi }$.

\medskip
Corollary 5.5, statements (5.2) and (5.3), and the classification of
divisible abelian groups (cf. \cite{F}, Theorem~23.1) describe the
isomorphism classes of Brauer groups of Henselian PQL-fields with
totally indivisible value groups.

\section{Applications}

The first result of this Section together with Theorem~2.1 of
\cite{Ch2}, I, characterizes the quasilocal property in the class of
Henselian quasilocal fields with totally indivisible value groups.

\medskip
\begin{prop}
Let $(K, v)$ be a Henselian field, such that {\rm char}$(\widehat K)
= q \neq 0$, and let $K _{p}$ be the fixed field of some Sylow
pro-$p$-subgroup $G _{p} \le \mathcal{G}_{K}$, for each $p \in \Pi
(K)$. Assume that $v(K) \neq pv(K)$, for every $p \in \Pi (K)$, and
$K$ possesses at least one finite extension in $K _{\rm sep}$ of
nontrivial defect. Then $K$ is quasilocal if and only if it
satisfies the following two conditions:
\par
{\rm (i)} $\widehat K$ is perfect, $q \notin \Pi (\widehat K)$,
$v(K)/qv(K)$ is of order $q$, and $K _{\rm sep}$ contains as a
subfield a norm-inertial $\mathbb Z_{q}$-extension $Y$ of $K _{q}$,
such that every finite extension $L _{q}$ of $K _{q}$ in $K _{\rm
sep}$ with $L _{q} \cap Y = K _{q}$ is totally ramified;
\par
{\rm (ii)} $r(p)_{K _{p}} \le 2$, for each $p \in \Pi (K) \setminus
\{q\}$.
\end{prop}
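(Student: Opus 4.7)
The proof naturally splits according to the prime $p$ at which one tests the $p$-quasilocal property, with $q = \mathrm{char}(\widehat K)$ playing a distinguished role since, by (2.3), any defectful finite extension of $K$ in $K_{\rm sep}$ has $q$-primary defect and can in fact be refined, via Lemma~4.1, to an immediate extension $I/K$ of degree $q$ in $K(q)$. My plan is to handle the necessity and sufficiency of (i), (ii) prime by prime, leaning throughout on Theorem~1.1 applied to $K_q$, on Theorem~3.1, and on the norm-inertiality equivalences (3.4)--(3.5).

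For necessity, assume $K$ is quasilocal. The primes $p \in \Pi(K) \setminus \{q\}$ are easy: $K_p$ inherits $p$-quasilocality, and since $[K_p:K]$ is prime to $p$ gives $v(K_p) \neq pv(K_p)$ while $p \neq \mathrm{char}(\widehat{K_p})$, statement (1.4) forces $\mathcal{G}(K_p(p)/K_p)$ to be $\mathbb{Z}_p$ or a Demushkin group of rank $2$, yielding (ii). For (i), Lemma~4.1 produces the immediate $I \subset K(q)$ of degree $q$; Theorem~3.1 gives that $\widehat K$ is perfect and $v(K)/qv(K)$ is of order $q$. Since $I$ is the unique non-totally-ramified degree-$q$ extension by Lemma~3.3(ii), Lemma~3.5(iii) shows $q \notin P(\widehat K)$, and perfectness forces $\mathrm{cd}_q(\mathcal{G}_{\widehat K}) = 0$, i.e. $q \notin \Pi(\widehat K)$. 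Now $K_q$ is Henselian $q$-quasilocal with $v(K_q) \neq qv(K_q)$ and inherits the defectful extension $IK_q/K_q$; Theorem~1.1(ii) applied to $(K_q, v_{K_q}, q)$ rules out alternative $(\beta)$ and produces a $\mathbb{Z}_q$-extension $Y/K_q$ in $K_q(q) = K_{q,{\rm sep}}$ with $v_Y(Y) = v(K_q)$ and separable residue extension. Norm-inertiality of $Y$ follows either from inertiality of each finite $Y_n/K_q$ (giving $\nabla_0(K_q) \subseteq N(Y_n/K_q)$ by (2.2), as noted before (4.3)) or, in the immediate case, by iterating Theorem~3.1 along $K_q \subset Y_1 \subset Y_2 \subset \cdots$ with the help of (3.4)--(3.5). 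Finally, Lemma~3.5(ii), applied over $K_q$ to its unique non-totally-ramified degree-$q$ subextension $Y_1 \subseteq Y$, gives that a finite $L_q \subset K_q(q)$ is totally ramified iff $L_q \cap Y_1 = K_q$, iff $L_q \cap Y = K_q$, which is the last clause of (i).

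For sufficiency, assume (i), (ii), and let $F/K$ be finite, $p \in P(F)$. When $p \neq q$, a Sylow pro-$p$-subgroup of $\mathcal{G}_F$ is the restriction of a conjugate of $G_p$, so its fixed field inside $K_{\rm sep}$ is a finite extension of the form $K_pF$ (up to conjugation) and hypothesis (ii) yields $r(p) \le 2$ there; the analogue of (1.4) combined with the criterion (1.1) and the fact that $\mathrm{cd}_p(\mathcal{G}(K_p(p)/K_p)) \le 2$ propagates $p$-quasilocality back to $F$. When $p = q$, the remaining conditions in (i) are exactly what is needed to reverse the argument of Section~4 over $K_q$: perfectness of $\widehat K$ together with $q \notin \Pi(\widehat K)$ kills the inertial part of $\mathrm{Br}(K_q)_q$; the order-$q$ quotient $v(K)/qv(K)$ controls the ramified part via Theorem~3.1 and Lemma~3.9; and the norm-inertial $\mathbb{Z}_q$-extension $Y$ together with the complementary-totally-ramified condition lets one factor every finite $M \in I(K_q(q)/K_q)$ into an inside-$Y$ piece $M \cap Y_n$ and a totally ramified piece $M_0$. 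Evaluating norms on the two pieces using (3.4)--(3.5) and the computations of Section~3 verifies (1.1), i.e. injectivity of $\mathrm{Cor}_{M/K_q}$ on $\mathrm{Br}(M)_q$; descent along the prime-to-$q$ extension $K_qF/F$ then gives $q$-quasilocality of $F$.

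The main obstacle is the sufficiency at $p = q$: one must read the hypothesis ``$Y$ norm-inertial with totally ramified complement'' as an internal substitute for Theorem~1.1(ii)$(\alpha)$ and push it through the Brauer/norm-group machinery to verify (1.1) without circular appeal to $q$-quasilocality. The key technical point is the inductive verification, layer by layer along $Y_1 \subset Y_2 \subset \cdots$, that $\nabla_0(K_q) \subseteq N(Y_n/K_q)$ combined with total ramification on the complement suffices to force $\mathrm{Cor}_{M/K_q}$ injective on $\mathrm{Br}(M)_q$; this is precisely where (3.4)--(3.5), Lemma~3.9, and the index--exponent control of Theorem~3.1 and Lemma~3.3(i) have to be orchestrated.
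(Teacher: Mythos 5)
Your treatment of necessity and of the primes $p\in \Pi (K)\setminus \{q\}$ is essentially the paper's own argument: condition (ii) is equivalent to the $p$-quasilocality of $K _{p}$ via (1.4), and condition (i) is extracted from Lemma 4.1, Theorem 3.1, Lemma 3.3 (ii), Lemma 3.5 and the fact that ${\rm Br}(K _{q}) _{q} \cong \mathbb Z(q ^{\infty })$ forces norm-inertiality of the immediate $\mathbb Z _{q}$-extension. That half is fine.

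The sufficiency at $p = q$, however, is where the proposition actually lives, and there your proposal has a genuine gap rather than a proof. You describe the goal (verify (1.1) for $K _{q}$ by factoring a finite $M \in I(K _{q}(q)/K _{q})$ into a piece inside $Y$ and a totally ramified piece) and you correctly identify the danger of circularity, but you do not supply the ideas that make the argument close. Concretely, three things are missing. First, the reduction via the maximal $q$-divisible isolated subgroup $H \in {\rm Is}_{w}(K _{q})$ and the resulting dichotomy (6.1) -- either $Y$ is the compositum of the inertial extensions relative to $w _{H}$ (handled by \cite{Ch2}, I, Lemma~2.2) or one may assume every nontrivial isolated subgroup is $q$-indivisible -- is absent, and without it the later norm computations do not apply. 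Second, and most importantly, the paper's key step is the proof that ${\rm Br}(Y) = \{0\}$: using the projectivity of $\mathbb Z _{q}$ one writes any finite extension $L$ of $Y$ inside $Y.L _{q} ^{\prime }$ with $L _{q} ^{\prime } \cap Y = K _{q}$ totally ramified, deduces from \cite{TY}, Theorem~3.1, that every $D \in d(Y)$ is defectless, and then observes that since $\widehat Y$ is perfect, $q \notin P(\widehat Y)$ and $v(Y)/q ^{n}v(Y)$ is cyclic, ${\rm ind}(D)$ would have to divide the defect of $D$, whence ${\rm Br}(K _{q}) = \bigcup _{n} {\rm Br}(Y _{n}/K _{q})$ and, by norm-inertiality, $_{q}{\rm Br}(K _{q}) = {\rm Br}(Y _{1}/K _{q})$. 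Nothing in your sketch produces this identification of $_{q}{\rm Br}(K _{q})$ with a single relative Brauer group. Third, the concluding step -- that any other degree-$q$ extension $Y ^{\prime } \neq Y _{1}$ is totally ramified and satisfies $K _{q} ^{\ast } \subseteq N(Y _{1}Y ^{\prime }/Y ^{\prime })$, hence ${\rm Br}(Y _{1}/K _{q}) \subseteq {\rm Br}(Y ^{\prime }/K _{q})$, which is exactly the $q$-quasilocal property -- is not present. Finally, your proposed reliance on Theorem 3.1 and Lemma 3.9 in this direction would be circular, since both take the $p$-quasilocality of the base field as a hypothesis; the paper deliberately avoids them here and works only with (3.4)--(3.5), Hilbert's Theorem 90 and \cite{P}, Sect.~15.1, Proposition~b. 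You flag this as ``the key technical point'' to be orchestrated, but flagging it is not the same as resolving it.
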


\begin{proof}
The Henselian property of $(K, v)$ ensures that $K _{p} ^{\ast }/K
_{p} ^{\ast p} \cong \widehat K _{p} ^{\ast }/\widehat K _{p} ^{\ast
p} \oplus v(K)/pv(K)$, for each $p \in \mathbb P \setminus \{q\}$.
Since $v(K)/pv(K) \cong v(K _{p})/pv(K _{p})$ and $v(K) \neq pv(K)$,
this enables one to deduce from (2.3) that Br$(K _{p}) \neq \{0\}$
whenever $r(p)_{K _{p}} \ge 2$. These observations, combined with
(1.4), \cite{W}, Lemma~7, \cite{Ch5}, I, Lemma~3.8, indicate that
condition (ii) holds if and only if $K _{p}$ is $p$-quasilocal, for
each $p \in \Pi (K) \setminus \{q\}$. At the same time, it follows
from (2.3) and our assumptions that there exists a finite extension
of $K _{q}$ in $K _{\rm sep}$ of nontrivial defect. Suppose that $K
_{q}$ is quasilocal. Then (4.2), Lemma 4.1 and the preceding
observation imply the existence of an immediate $\mathbb Z
_{q}$-extension $Y$ of $K _{q}$ in $K _{\rm sep}$. In view of
Remark 4.5 and Theorem 4.1, Br$(K _{q}) \cong \mathbb Z(q ^{\infty
})$, so it follows from \cite{P}, Sect. 15.1, Proposition~b and the
Henselian property of $v _{K _{q}}$ that $Y$ is norm-inertial over
$K _{q}$. Since $q$ does not divide the degrees of the finite
extensions of $K$ in $K _{q}$, the fulfillment of the remaining part
of condition (i) can be proved by applying Lemmas 3.3 and 3.5 to $(K
_{q}, v _{K _{q}})$.
\par
Our objective now is to show that $K _{q}$ is $q$-quasilocal,
provided that condition (i) holds. Put $w = v _{K _{q}}$, denote by
$H$ the maximal $q$-divisible group from Is$_{w}(K _{q})$, and for
each $n \in \mathbb N$, let $Y _{n}$ be the extension of $K _{q}$ in
$Y$ of degree $q ^{n}$. Since $\widehat K$ is perfect, it follows
from the assumption on $Y/K _{q}$ and the choice of $H$ that $\mu
\in N(Y _{n}/K _{q})$, for every $n \in \mathbb N$ whenever $\mu \in
K _{q} ^{\ast }$ and $v _{K _{q}}(\mu ) \in H$. At the same time,
the $q$-divisibility of $H$ implies that every finite extension $L
_{q}$ of $K _{q}$ in $K _{\rm sep}$ with $L _{q} \cap Y = K _{q}$ is
totally ramified relative to $w _{H}$. The obtained results enable
one to deduce from Lemma 3.2 that the residue field of $(K _{q}, w
_{H})$ is perfect (of characteristic $q$ or zero). Observing also
that $w _{H}(Y) = w _{H}(K _{q})$, one concludes that $Y$ satisfies
one of the following two conditions:
\par
\medskip
(6.1) (i) $Y$ equals the compositum of the inertial extensions of $K
_{q}$ in $K _{\rm sep}$ relative to $w _{H}$;
\par
(ii) $Y/K _{q}$ is norm inertial relative to $w _{H}$.
\par
\medskip\noindent
The fulfillment of (6.1) guarantees that $K _{q}$ is quasilocal (see
\cite{Ch2}, I, Lemma~2.2, and the proof of \cite{Ch1}, Theorem~3.1
(b) (ii)), so we assume further that $Y/K _{q}$ satisfies (6.1)
(ii). Considering, if necessary, $w _{H}$ instead of $w$, and using
the observations preceding the statement of (6.1), one obtains a
reduction of the proof of the $q$-quasilocal property of $K _{q}$ to
the special case in which every nontrivial group from Is$_{w}(K
_{q})$ is $q$-indivisible. We first show that $_{q}{\rm Br}(K _{q})
= {\rm Br}(Y _{1}/K _{q})$. Let $L$ be a finite extension of $Y$ in
$K _{\rm sep}$. Then it follows from Galois theory and the
projectivity of $\mathbb Z_{q}$ \cite{G}, Theorem~1, that $L
\subseteq Y.L _{q} ^{\prime }$, for some finite extension $L _{q}
^{\prime }$ of $K _{q}$ in $K _{\rm sep}$, such that $L _{q}
^{\prime } \cap Y = K _{q}$. This implies that $L/Y$ is defectless,
so it follows from \cite{TY}, Theorem~3.1, that every $D \in d(Y)$
is defectless. Observing, however, that $\widehat Y$ is perfect, $q
\notin P(\widehat Y)$, and for each $n \in \mathbb N$, $v(Y)/q
^{n}v(Y)$ is a cyclic group of order $q ^{n}$, one concludes that
ind$(D)$ divides the defect of $D$ over $Y$. Thus it turns out that
Br$(Y) = \{0\}$ and Br$(K _{q}) = {\rm Br}(Y/K _{q}) = \cup _{n=1}
^{\infty } {\rm Br}(Y _{n}/K _{q})$. Since $Y/K _{q}$ is
norm-inertial, this enables one to deduce from (3.4), Hilbert's
Theorem 90 and general properties of cyclic algebras (cf. \cite{P},
Corollary~b) that $_{q} {\rm Br}(K _{q}) = {\rm Br}(Y _{1}/K _{q})$,
as claimed. Let now $Y ^{\prime }$ be an extension of $K _{q}$ in $K
_{\rm sep}$, such that $Y ^{\prime } \neq Y _{1}$ and $[Y ^{\prime
}\colon K _{q}] = q$. Then $Y ^{\prime }/K _{q}$ is totally
ramified. Since $\widehat K _{q}$ is perfect and $w(K _{q})/qw(K
_{q})$ is of order $q$, this ensures that $K _{q} ^{\ast } \subseteq
Y ^{\prime \ast p} \nabla _{0}(Y ^{\prime })$. Observing also that
$YY ^{\prime }/Y ^{\prime }$ is norm-inertial (as follows from (3.4)
and (2.3)), one concludes that $K _{q} ^{\ast } \subseteq N(Y _{1}Y
^{\prime }/Y ^{\prime })$. In view of \cite{P}, Sect. 15.1,
Proposition~b, the obtained result implies that Br$(Y _{1}/K _{q})
\subseteq {\rm Br}(Y ^{\prime }/K _{q})$. As Br$(Y ^{\prime }/K
_{q}) \subseteq _{q}{\rm Br}(K _{q})$, this proves that Br$(Y
^{\prime }/K _{q}) = {\rm Br}(Y _{1}/K _{q}) = _{q}{\rm Br}(K)$,
which means that $K _{q}$ is $q$-quasilocal. It is now easy to
deduce from \cite{Ch5}, I, Lemma~8.3, that $K$ is quasilocal when
conditions (i) and (ii) hold, which completes the proof of
Proposition 6.1.
\end{proof}

\medskip
The following result shows that every nontrivial divisible subgroup
of $\mathbb Q/\mathbb Z$ is realizable as a Brauer group of a
quasilocal field of the type characterized by Proposition 6.1. In
view of Corollary 5.3, it clearly illustrates the fact that the
study of Henselian quasilocal fields with totally indivisible value
groups does not reduce to the special case covered by \cite{Ch2},
II, Theorem~2.1.

\begin{prop}
Let $(\Phi , \omega )$ be a Henselian discrete valued field, such
that $\widehat \Phi $ is quasifinite and {\rm char}$(\widehat \Phi )
= q
> 0$, and let $T$ be a divisible subgroup of
$\mathbb Q /\mathbb Z$ with $T _{q} \neq \{0\}$. Then there exists a
Henselian quasilocal field $(K, v)$ with the following properties:
\par
{\rm (i)} {\rm Br}$(K) \cong T$, $K/\Phi $ is a field extension of
transcendency degree $1$, and $v$ is a prolongation of $\omega $;
\par
{\rm (ii)} $v(K)$ is Archimedean and totally indivisible, $\widehat
K/\widehat \Phi $ is an algebraic extension and $K$ possesses an
immediate $\mathbb Z_{q}$-extension.
\end{prop}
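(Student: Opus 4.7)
The plan is to realize $T$ as ${\rm Br}(K)$ by constructing $K$ as the Henselization of a carefully chosen transcendental extension of an algebraic extension $\Phi ^{\sharp }/\Phi$, with the transcendental parameter having an irrational value and its $q$-power roots adjoined, so that the resulting value group is Archimedean, totally indivisible, and satisfies $v(K)/qv(K) \cong \mathbb{Z}/q$; quasilocality will then follow from Proposition~6.1.

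Write $T = \bigoplus _{p \in S} \mathbb{Z}(p ^{\infty })$ with $q \in S \subseteq \mathbb{P}$. Since $\widehat \Phi$ is quasifinite, $\mathcal{G}_{\widehat \Phi} \cong \widehat{\mathbb{Z}}$; let $k ^{\sharp } \subseteq \widehat \Phi _{\rm sep}$ be the fixed field of the closed subgroup of $\widehat{\mathbb{Z}}$ corresponding to the quotient $\widehat{\mathbb{Z}} \twoheadrightarrow \prod _{p \in S \setminus \{q\}} \mathbb{Z}_{p}$. Then $k ^{\sharp }$ is perfect with $\mathcal{G}_{k ^{\sharp }} \cong \prod _{p \in S \setminus \{q\}} \mathbb{Z}_{p}$, so $\Pi (k ^{\sharp }) = S \setminus \{q\}$ and ${\rm Br}(k ^{\sharp }) = \{0\}$. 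Let $\Phi ^{\sharp }$ be the inertial lift over $\Phi$ with residue field $k ^{\sharp }$, a Henselian discrete valued field over $\Phi$ with $v(\Phi ^{\sharp }) = \mathbb{Z}$.

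Fix an irrational $\alpha > 0$ and extend $\omega ^{\sharp }$ to $\Phi ^{\sharp }(t)$ by the Gauss-type rule $v _{0}(t) = \alpha$. Adjoining the $q$-power roots of $t$ as successive roots of $X ^{q ^{n}} - t$ over $\Phi ^{\sharp }(t)$ (purely inseparable in characteristic $q$, of Eisenstein type in characteristic $0$), one obtains a valued field $F = \Phi ^{\sharp }(t ^{1/q ^{\infty }})$ with $v(F) = \mathbb{Z} + \alpha \mathbb{Z}[1/q]$ and residue field $k ^{\sharp }$. This value group is Archimedean, totally indivisible (since $1 \notin p\mathbb{Z}$ for every prime $p$), and $v(F)/qv(F) \cong \mathbb{Z}/q$. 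Let $K$ be the Henselization of $(F, v _{0})$; then $v(K) = v(F)$, $\widehat K = k ^{\sharp }$, $K/\Phi$ has transcendence degree $1$, and $\widehat K/\widehat \Phi$ is algebraic.

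It remains to verify that $K$ is quasilocal, that ${\rm Br}(K) \cong T$, and that $K$ possesses an immediate $\mathbb{Z}_{q}$-extension. Quasilocality follows from Proposition~6.1 once one checks (i) and (ii): the perfectness of $\widehat K$, the condition $q \notin \Pi (\widehat K)$, and the order-$q$ quotient $v(K)/qv(K)$ hold by construction; the required norm-inertial $\mathbb{Z}_{q}$-extension $Y$ of $K _{q}$ is built from a tower of immediate defect-$q$ extensions generated by elements $\theta _{n}$ with $\theta _{n} ^{q} - \theta _{n} = a _{n}$ (Artin--Schreier in char $q$, mixed-characteristic analog via principal units in char $0$) where $v(a _{n}) = -\alpha /q ^{n}$, exploiting the $q$-divisibility of the $\alpha$-part of $v(K)$ and the Artin--Schreier-closedness of $k ^{\sharp }$; the disjoint degree-$q$ extensions of $K _{q}$ are totally ramified because $v(K _{q})/qv(K _{q})$ is cyclic of order $q$. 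The rank condition $r(p)_{K _{p}} \le 2$ for $p \in \Pi (K) \setminus \{q\}$ follows from Kummer theory, with $\widehat K ^{\ast }/\widehat K ^{\ast p}$ contributing one class (from the pro-$p$ factor of $\mathcal{G}_{k ^{\sharp }}$) and the transcendental generator one more. The Brauer group computation then splits into tame pieces giving $\mathbb{Z}(p ^{\infty })$ for $p \in S \setminus \{q\}$ (via the tame-symbol pairing of $\chi _{p}(k ^{\sharp })$ with $v(K)/pv(K)$), $\mathbb{Z}(q ^{\infty })$ for $p = q$ by Theorem~1.1 applied through the immediate $\mathbb{Z}_{q}$-extension $Y$, and vanishing components for $p \notin S$. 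The principal obstacle is the fine control of the $q$-primary Brauer classes in the mixed characteristic case and the simultaneous verification of the norm-inertial and ramification properties of $Y$ demanded by Proposition~6.1, which requires delicate management of the interplay between the $q$-divisible subgroup $\alpha \mathbb{Z}[1/q]$ of $v(K)$ and the perfect, $q$-extension-free residue field $k ^{\sharp }$.
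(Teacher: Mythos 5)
The paper does not actually reprove this statement -- it refers the reader to the constructive argument in \cite{Ch2}, II, Sect.\ 4 -- so your proposal has to be judged on its own terms, and it contains a fatal flaw in the very first structural choice. Taking the Gauss-type prolongation with $v _{0}(t) = \alpha $ irrational and then adjoining $q$-power roots of $t$ gives $v(K) = \mathbb Z + \alpha \mathbb Z[1/q]$, a group of \emph{rational rank} $2$; hence $v(K)/pv(K) \cong (\mathbb Z/p\mathbb Z) ^{2}$ for every prime $p \neq q$ (the class of the uniformizer of $\Phi $ and the class of $\alpha $ are independent mod $p$). Your count of $r(p)_{K _{p}}$ ("one class from the residue field, one from the transcendental generator") forgets the uniformizer of $\Phi $, which contributes a third independent degree-$p$ extension. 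Concretely, for $p \in S \setminus \{q\}$ one has an inertial cyclic extension $W/K$ of degree $p$ plus two independent totally ramified directions, so $r(p)_{K _{p}} = 3$; by (1.4) this already rules out $p$-quasilocality, so $K$ is not quasilocal and condition (ii) of Proposition 6.1 fails. Independently, the Brauer group comes out wrong: the inertially split cyclic algebras $(W/K, \sigma , \pi )$ and $(W/K, \sigma , t)$ represent independent nontrivial classes, so $_{p}{\rm Br}(K) \supseteq (\mathbb Z/p\mathbb Z) ^{2}$, whereas $T _{p} \cong \mathbb Z(p ^{\infty })$ has $p$-torsion of order $p$; and for $p \notin S$ with a primitive $p$-th root of unity in $\widehat K$ (which happens, e.g., for all $p \neq q$ when $T = \mathbb Z(q ^{\infty })$, since then $k ^{\sharp }$ is separably closed), the totally ramified symbol algebra built from $\pi $ and $t$ gives ${\rm Br}(K) _{p} \neq \{0\}$ although $T _{p} = \{0\}$. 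This is also consistent with Corollary 5.3, which forces ${\rm Br}(K)$ of a Henselian quasilocal field with totally indivisible value group to embed in $\mathbb Q/\mathbb Z$.

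The repair is not cosmetic. To make $K$ quasilocal with ${\rm Br}(K) \cong T$ you need $v(K)/pv(K)$ cyclic of order $p$ for \emph{every} $p$, which together with total indivisibility and the Archimedean requirement forces $v(K)$ to be a dense, totally indivisible subgroup of $\mathbb Q$ containing $\omega (\Phi )$ -- rational rank $1$. Since $\widehat K/\widehat \Phi $ must stay algebraic, Abhyankar's inequality then shows the transcendental step cannot be a Gauss-type (value- or residue-transcendental) extension at all: the transcendental element has to be taken as a limit of a pseudo-Cauchy sequence of algebraic elements of transcendental type, so that neither the value group nor the residue field grows at that step, with the value group then enlarged algebraically in a controlled way. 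This is precisely the kind of construction carried out in \cite{Ch2}, II, Sect.\ 4, and it is a genuinely different mechanism from the one you propose; the remaining ingredients of your outline (the choice of $k ^{\sharp }$, the appeal to Proposition 6.1, the Artin--Schreier-type immediate $\mathbb Z_{q}$-tower) are reasonable in spirit but cannot be salvaged on top of the value group you have built.
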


The proof of Proposition 6.2 is constructive and can be found in
\cite{Ch2}, II, Sect. 4. Our next result gives a criterion for
divisibility of value groups of Henselian quasilocal fields, and for
defectlessness of their central division algebras.

\medskip
\begin{prop}
Let $E$ be a quasilocal field satisfying one of the following two
conditions:
\par\smallskip
{\rm (i)} Every finite group $G$ is isomorphic to a subquotient of
$\mathcal{G}_{E}$, i.e. to a homomorphic image of some open subgroup
$H _{G}$ of $\mathcal{G}_{E}$;
\par
{\rm (ii)} {\rm Br}$(E _{p}) \neq \{0\}$, for each $p \in \Pi (E)$,
$E _{p}$ being the fixed field of some Sylow pro-$p$-subgroup $G
_{p}$ of $\mathcal{G}_{E}$; moreover, if {\rm Br}$(E _{p}) \cong
\mathbb Z (p ^{\infty })$, then $2 < r(p)_{E _{p}} < \infty $.
\par\smallskip\noindent
Assume that $E$ has a Henselian valuation $v$. Then $v(E)$ is
divisible and, in case {\rm (i)}, every $D \in d(E)$ is inertial
over $E$ relative to $v$.
\end{prop}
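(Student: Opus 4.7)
I would prove the divisibility of $v(E)$ by contradiction: suppose $v(E)\neq pv(E)$ for some $p\in\mathbb P$. Let $G_{p}\le\mathcal{G}_{E}$ be a Sylow pro-$p$-subgroup with fixed field $E_{p}$. Then $E_{p}$ is $p$-quasilocal by \cite{Ch5}, I, Lemma~8.3; it is Henselian; $v(E_{p})\neq pv(E_{p})$ since all finite subextensions of $E_{p}/E$ have prime-to-$p$ degree; and $E_{p}$ contains a primitive $p$-th root of unity $\varepsilon$, because $[E_{p}(\varepsilon):E_{p}]$ divides $p-1$ while being a power of $p$. The whole argument is then to exploit the two hypotheses in order to violate the rank bounds on $\mathcal{G}(E_{p}(p)/E_{p})$ forced by the structural theorems of Sections~2--4.

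Split on whether $p\neq q:=\mathrm{char}(\widehat E)$ or $p=q$. If $p\neq q$, then (1.4) gives that $\mathcal{G}_{E_{p}}=G_{p}$ is either a rank-$2$ Demushkin group or $\mathbb Z_{p}$, so $r(p)_{E_{p}}\le 2$. Under (ii), Lemma~2.2 supplies a proper $p$-extension of $E_{p}$, hence $p\in\Pi(E)$ and $\mathrm{Br}(E_{p})_{p}\neq\{0\}$; Theorem~1.1(i) (the exceptional clause is excluded by $\varepsilon\in E_{p}$) gives $\mathrm{Br}(E_{p})_{p}\cong\mathbb Z(p^{\infty})$, and the moreover-clause of (ii) forces $r(p)_{E_{p}}>2$, contradicting (1.4). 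Under (i), I would use profinite Sylow theory: for every finite $L/E$, the maximal pro-$p$-quotient $\mathcal{G}(L(p)/L)$ is a homomorphic image of a Sylow pro-$p$-subgroup of $\mathcal{G}_{L}$, which in turn is an open subgroup of a conjugate of $G_{p}$; by the Schreier-type formula for open subgroups of rank-$2$ Demushkin groups (rank stays $2$) and of $\mathbb Z_{p}$ (rank stays $1$), I get $r(p)_{L}\le 2$ for every finite $L/E$, contradicting that (i) realizes $(\mathbb Z/p)^{n}$ as a quotient of $\mathcal{G}_{L}$ for arbitrary $n$. For $p=q$, Corollary~2.7 first forces $v(E_{q})/qv(E_{q})\cong\mathbb Z/q$ (otherwise $r(q)_{E_{q}}=2$, which violates both (i) and (ii) as above). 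Then (4.1), applied with $\varepsilon\in E_{q}$, shows that in the mixed-characteristic case with $v_{G(E_{q})}(E_{q})\neq qv_{G(E_{q})}(E_{q})$ one has $r(q)_{E_{q}}\le 2$, a contradiction; so either $\mathrm{char}(E_{q})=q$ or $v(E_{q})/G(E_{q})$ is $q$-divisible. The first possibility gives $r(q)_{E_{q}}=\infty$ by \cite{Ch8}, Remark~4.2, contradicting (ii) directly and (i) via the same Sylow-open-subgroup rank argument after noting that the equicharacteristic pro-$q$ structure of $G_{q}$ still bounds the ranks of the relevant open subgroups. In the second possibility, the $q$-purity of the isolated subgroup $G(E_{q})$ gives $G(E_{q})\neq qG(E_{q})$, so Proposition~2.6 forces $\widehat{E_{q}}$ to be finite; but $\mathcal{G}_{E_{q}}=G_{q}$ is pro-$q$, hence by (2.4) $\widehat{E_{q}}$ has no proper prime-to-$q$ separable extension, which is impossible for any finite field of characteristic $q$ (e.g.\ $\mathbb F_{q^{n}}\subsetneq\mathbb F_{q^{n(q+1)}}$).

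For the inertiality statement under (i), once $v(E)$ is known divisible, the finite torsion group $v(D)/v(E)$ must embed in the torsion-free group $v(D)$, so $e(D/E)=1$ for every $D\in d(E)$; then (2.3) reduces the task to ruling out positive $q$-power defect. If some $D\in d(E)$ is non-inertial, I would take a minimal-defect counterexample, pass to a maximal subfield of $D$ over $E$ containing $E$, and apply Lemma~4.1 and (4.5) together with Remark~4.4 to produce an immediate cyclic degree-$q$ extension, and hence an immediate $\mathbb Z_{q}$-extension, of a suitable finite $L/E$; rank constraints on the Sylow pro-$q$-subgroup of $\mathcal{G}_{L}$ then contradict (i) exactly as in Case~B above. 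The main obstacle I expect is making the Sylow-open-subgroup rank comparison rigorous in the equicharacteristic sub-case of $p=q$, because the pro-$q$-group $G_{q}$ there is not Demushkin and the relevant bounds on ranks of open subgroups must be assembled from (4.3), Remark~4.4, and the immediate-extension tower in the proof of Theorem~1.1(ii) rather than from a clean Schreier formula.
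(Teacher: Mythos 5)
Your handling of condition (ii) is essentially the paper's argument, and your handling of condition (i) when $p\neq\mathrm{char}(\widehat E)$ is a correct and genuinely different route: bounding $r(p)_{L}\le 2$ for \emph{every} finite $L/E$ via the fact that a Sylow pro-$p$-subgroup of $\mathcal{G}_{L}$ is open in a conjugate of $G_{p}$, which by (1.4) is a rank-$2$ Demushkin group or $\mathbb Z_{p}$ (so open subgroups again have rank $\le 2$), against the quotient $(\mathbb Z/p\mathbb Z)^{3}$ supplied by (i). That is cleaner than the paper's construction at those primes. Your derivation of $r(q)_{E_{q}}=\infty$ from Proposition 2.6 plus the incompatibility of a finite residue field with $\mathcal{G}_{E_{q}}$ being pro-$q$ is also sound.

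The gap is exactly where you flag it, and it is not a technicality that can be "assembled" from (4.3), Remark 4.4 and the tower in the proof of Theorem 1.1 (ii): for condition (i) with $p=q=\mathrm{char}(\widehat E)$ the rank-bound strategy has nothing to bound. If, say, $\mathrm{char}(E)=q$ and $v(E)\neq qv(E)$, then $r(q)_{E_{q}}=\infty$ by \cite{Ch8}, Remark~4.2, and likewise for every finite extension; since condition (i) only produces \emph{lower} bounds on ranks, no contradiction can come from $r(q)_{L}$. The paper's mechanism here is of a different kind. Condition (i) realizes ${\rm Alt}_{jq}\times{\rm Alt}_{kq}$ as a Galois group over some finite $L_{q}/E$; because $\mathcal{G}_{U_{q}}$ is prosolvable for the compositum $U_{q}$ of the inertial extensions of $L_{q}$, the simplicity of the alternating groups forces $L_{q}'L_{q}''\subseteq U_{q}$, i.e. the extension is \emph{inertial}; passing to the fixed field $M_{q}$ of a Sylow $q$-subgroup of $\mathcal{G}(L_{q}'L_{q}''/L_{q})$ one obtains an inertial $(\mathbb Z/q\mathbb Z)^{2}$-extension, whence $r(q)_{\widehat M_{q}}\ge 2$; and it is this bound \emph{on the residue field} that collides with Lemma 3.3 (ii) (at most one non--totally-ramified degree-$q$ extension of a Henselian $q$-quasilocal field with $q$-indivisible value group), forcing $v(M_{q})=qv(M_{q})$ and hence $v(E)=qv(E)$. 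Your proof of the inertiality assertion inherits the same defect, since it again ends with "rank constraints \ldots exactly as in Case B"; the paper instead gets defectlessness from $v(E_{q})=qv(E_{q})$, $q\in P(\widehat E_{q})$, Lemma 3.6 and \cite{TY}, Theorem~3.1, once divisibility of $v(E)$ is in hand.
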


\begin{proof} For each $p \in \mathbb P$, $E _{p}$ is $p$-quasilocal,
$v(E _{p})/pv(E _{p}) \cong v(E)/pv(E)$, and $E _{p}$ contains a
primitive $p$-th root of unity unless $p = {\rm char}(E) = p$. These
observations, combined with (1.4), imply that if $v(E) \neq pv(E)$
and $p \neq {\rm char}(\widehat E)$, then $r(p)_{E _{p}} \le 2$.
When condition (ii) holds, this contradicts Theorem 1.1, and thereby
proves that $v(E) = pv(E)$ in case $p \neq {\rm char}(\widehat E)$.
Suppose now that char$(\widehat E) = p$, $p \in \Pi (E)$ and $E$
satisfies condition (ii). We first show that $r(p)_{E _{p}} = \infty
$. Assuming the opposite, one obtains from \cite{J},
Proposition~4.4.8, and \cite{Ch6}, Corollary~5.3, that char$(E) =
0$, Br$(E _{p}) \cong \mathbb Z(p ^{\infty })$ and $r(p)_{E _{p}}
\ge 3$. It is therefore clear from (4.1) that $v(E)/G(E)$ is
$p$-divisible, $G(E)$ being defined as in Section 2. At the same
time, \cite{E1}, Proposition~3.4, and the assumptions on $r(p)_{E
_{p}}$ require that $v(E _{p}) \neq pv(E _{p})$. Thus it turns out
that $v(E) \neq pv(E)$ and $G(E) \neq pG(E)$, so it follows from
Proposition 2.6 that $G(E)$ is cyclic and $v(E)$ is totally
indivisible. The obtained contradiction proves that $r(p)_{E _{p}} =
\infty $. Hence, by condition (ii), $_{p}{\rm Br}(E _{p})$ is
noncyclic, which enables one to deduce from (5.2) (ii) and Theorem
1.1 that $v(E) = pv(E)$. Thus the fulfillment of (ii) guarantees
that $v(E)$ is divisible.
\par
We turn to the proof of the divisibility of $v(E)$ in case (i) of
Proposition 6.3. Our assumptions indicate that, for each $p \in
\mathbb P$, there are finite extensions $L _{p}$, $L _{p} ^{\prime
}$ and $L _{p} ^{\prime \prime }$ of $E$ in $E _{\rm sep}$, such
that $L _{p} ^{\prime }$ and $L _{p} ^{\prime \prime }$ are Galois
over $L _{p}$ with $\mathcal{G}(L _{p} ^{\prime }/L _{p})$ and
$\mathcal{G}(L _{p} ^{\prime \prime }/L _{p})$ isomorphic to the
alternating groups Alt$_{jp}$ and Alt$_{kp}$, respectively, for some
integers $j$ and $k$ with $5 \le j < k$. It is clear from Galois
theory, the choice of $L _{p} ^{\prime }$ and $L _{p} ^{\prime
\prime }$, and the simplicity of the groups Alt$_{n}$, $n \ge 5$,
that $L _{p} ^{\prime }L _{p} ^{\prime \prime }$ is a Galois
extension of $L _{p}$ with $\mathcal{G}(L _{p} ^{\prime }L _{p}
^{\prime \prime }/L _{p}) \cong {\rm Alt}_{jp} \times {\rm
Alt}_{kp}$. Denote by $U _{p}$ the compositum of the inertial
extensions of $L _{p}$ in $E _{\rm sep}$, and put $I _{p} ^{\prime }
= U _{p} \cap L _{p} ^{\prime }$, $I _{p} ^{\prime \prime } = U _{p}
\cap L _{p} ^{\prime \prime }$. Note that $U _{p}/L _{p}$ is Galois
and $\mathcal{G}_{U _{p}}$ is prosolvable (see \cite{JW}, page 135,
and, e.g., \cite{Ch2}, I, page 3102), which means that
$\mathcal{G}(L _{p} ^{\prime }/I _{p} ^{\prime })$ and
$\mathcal{G}(L _{p} ^{\prime \prime }/I _{p} ^{\prime \prime })$ are
solvable groups. Therefore, the preceding observation also shows
that $L _{p} ^{\prime }L _{p} ^{\prime \prime } \subseteq U _{p}$.
Let now $M _{p}$ be the fixed field of some Sylow $p$-subgroup of
$\mathcal{G}(L _{p} ^{\prime }L _{p} ^{\prime \prime }/L _{p})$, and
let $H _{p} ^{\prime }$ and $H _{p} ^{\prime \prime }$ be Sylow
$p$-subgroups of $\mathcal{G}(L _{p} ^{\prime }/L _{p})$ and
$\mathcal{G}(L _{p} ^{\prime \prime }/L _{p})$, respectively. Then
$L _{p} ^{\prime }L _{p} ^{\prime \prime }/M _{p}$ is an inertial
Galois extension with $\mathcal{G}(L _{p} ^{\prime }L _{p} ^{\prime
\prime }/M _{p}) \cong H _{p} ^{\prime } \times H _{p} ^{\prime
\prime }$. The obtained result indicates that $r(p)_{\widehat M
_{p}} \ge 2$. As $M _{p}$ is quasilocal, this yields $v(M _{p}) =
pv(M _{p})$ (see Remark 3.4). Hence, by the isomorphism $v(E)/pv(E)
\cong v(M _{p})/pv(M _{p})$, we have $v(E) = pv(E)$, which proves
that $v(E)$ is divisible, as claimed.
\par
It remains to be seen that the fulfillment of condition (i) of
Proposition 6.3 ensures that every $D \in d(E)$ is inertial. As
$v(E)$ is divisible, it suffices to show that $D/E$ is defectless.
In view of (2.3) and (1.1), one may assume, for the proof, that
char$(\widehat E) = q > 0$ and Br$(E _{q}) \neq \{0\}$. As shown
above, $q \in P(\widehat E _{q})$ and $v(E _{q}) = qv(E _{q})$, so
Lemma 3.6 and our assumptions indicate that finite extensions of $E
_{q}$ in $E _{\rm sep}$ are defectless. It is therefore clear from
(2.3) that finite extensions of $E$ in $E _{\rm sep}$ have the same
property, so the concluding assertion of Proposition 6.3 follows
from \cite{TY}, Theorem~3.1.
\end{proof}

Our next result characterizes $p$-adically closed fields in the
class of Henselian quasilocal fields with residue fields of
characteristic $p$; it essentially gives an answer to a question
posed to the author by Serban Basarab.

\medskip
\begin{prop}
For a Henselian field $(K, v)$ with {\rm char}$(\widehat K) = p >
0$ and $v(K) \neq pv(K)$, the following conditions are equivalent:
\par
{\rm (i)} $(K, v)$ is $p$-adically closed;
\par
{\rm (ii)} $K$ is quasilocal, {\rm char}$(K) = 0$, $r(p)_{K} \in
\mathbb N$ and either $r(p)_{K} \ge 3$ or $\mathcal{G}(K(p)/K)$ is a
free pro-$p$-group with $r(p)_{K} = 2$.
\end{prop}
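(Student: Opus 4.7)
The plan is to handle the two implications separately, with (ii) $\Rightarrow$ (i) carrying most of the work.

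For (i) $\Rightarrow$ (ii): A $p$-adically closed field $K$ is Henselian of characteristic $0$ with finite residue field and value group a $\mathbb{Z}$-group; its quasilocality is due to \cite{PR}, Theorem~3.1, as recalled in the Introduction. It remains to describe $\mathcal{G}(K(p)/K)$. By (2.6), $\mathcal{G}_{K}$ is isomorphic to the absolute Galois group of the completion $K _{v}$, which via the Prestel--Roquette transfer principle agrees topologically with that of a suitable finite extension of $\mathbb{Q}_{p}$. Demushkin--Serre theory then presents $\mathcal{G}(K(p)/K)$ as either a Demushkin group of rank $d+2$ (when $K$ contains a primitive $p$-th root of unity $\varepsilon $) or a free pro-$p$-group of rank $d+1$ (when $\varepsilon \notin K$), where $d$ is the $p$-rank of $(K, v)$. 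In every case $r(p)_{K} \in \mathbb{N}$, and either $r(p)_{K} \geq 3$ or $d = 1$ with $\varepsilon \notin K$, which places us in the free pro-$p$ rank-$2$ branch allowed by (ii).

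For (ii) $\Rightarrow$ (i) the goal is to force the value group to be a $\mathbb{Z}$-group and the residue field to equal $\mathbb{F}_{p}$; once this is achieved, $p$-adic closure follows from the Ax--Kochen--Ersov characterization. The first step is to rule out the possibility $G(K) = pG(K)$, where $G(K)$ is the minimal element of ${\rm Is}_{v}^{\prime}(K)$ containing $v(p)$. If $G(K) = pG(K)$ and $\varepsilon \notin K$, then Proposition 2.5~(i) forces $r(p)_{K} \leq 1$, contradicting (ii). If $G(K) = pG(K)$ and $\varepsilon \in K$, Proposition 2.5~(ii) presents $\mathcal{G}(K(p)/K)$ as a topological semidirect product $\mathbb{Z}_{p}^{\tau} \rtimes \mathcal{G}(K _{G(K)}(p)/K _{G(K)})$ with the second factor trivial or free pro-$p$; combining Corollary 2.7 (to rule out $\mathcal{G}(K(p)/K) \cong \mathbb{Z}_{p}^{2}$, which is neither free of rank $2$ nor of rank $\geq 3$) with (1.4) and \cite{Ch6}, Corollary~5.3, to rule out the remaining semidirect-product shapes not lying in the Demushkin/free catalogue allowed by (ii), this case is also excluded. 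Hence $G(K) \neq pG(K)$, and Proposition 2.6 applies to give $\widehat K$ finite, $G(K)$ cyclic, and finite separable extensions of $K$ defectless.

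It then remains to show $\widehat K = \mathbb{F}_{p}$ and that $v(K)$ is a $\mathbb{Z}$-group. Defectlessness together with a rank formula derived from Proposition 2.5 (applied to completions) and (2.3) expresses $r(p)_{K}$ as a sum of contributions from $[\widehat K \colon \mathbb{F}_{p}]$, from $v(K)/pv(K)$, and from the presence of $\varepsilon $; matching this sum against the precise bound in (ii) forces $[\widehat K \colon \mathbb{F}_{p}] = 1$. For the value group, the cyclicity of $v(K)/pv(K)$ (Corollary 2.7) and of $G(K)$ (Proposition 2.6), transported to every finite extension of $K$ via \cite{Ch5}, I, Theorem~4.1, yields cyclicity of $v(K)/\ell v(K)$ for every prime $\ell $, so $v(K)$ is a $\mathbb{Z}$-group. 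I expect the residue-field step to be the main obstacle, since it requires reading off $[\widehat K \colon \mathbb{F}_{p}]$ from a delicate comparison of ranks in which the exceptional free pro-$p$ rank-$2$ branch of (ii) must be kept distinct from the generic $r(p)_{K} \geq 3$ case.
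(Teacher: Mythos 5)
Your implication (i) $\Rightarrow$ (ii) is essentially sound and close in spirit to the paper's route (the paper passes to $K _{G(K)}$ via the inertial-lift isomorphism $\mathcal{G}_{K} \cong \mathcal{G}_{K _{G(K)}}$ rather than to the completion, but both reduce to the Demushkin/free dichotomy for local fields via \cite{S1}, Ch. II, Theorems~3 and 4). The genuine problems are in (ii) $\Rightarrow$ (i), and the decisive one is your final target. A $p$-adically closed field in the sense of \cite{PR} may have any finite $p$-rank $d = ef$, hence residue field $\mathbb F _{p ^{f}}$ with $f > 1$ and $r(p) _{K}$ equal to $d + 1$ or $d + 2$; such fields satisfy condition (ii) with $r(p) _{K} \ge 3$. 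So $\widehat K = \mathbb F _{p}$ is neither necessary nor derivable: since (ii) permits every finite value $r(p) _{K} \ge 3$, no matching of ranks can force $[\widehat K \colon \mathbb F _{p}] = 1$, and the step you yourself flag as the main obstacle is in fact false. What \cite{PR}, Theorem~3.1, actually requires --- and what the paper proves --- is only that $\widehat K$ be finite and $G(K)$ cyclic (both delivered by Proposition 2.6 once $G(K) \neq pG(K)$ is known), together with the divisibility of $v(K)/G(K)$, which is what makes $v(K)$ a $\mathbb Z$-group.

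That divisibility is the second gap. Your argument for cyclicity of $v(K)/\ell v(K)$ at primes $\ell \neq p$ --- ``transporting'' Corollary 2.7 and Proposition 2.6 to finite extensions --- does not engage $\ell$ at all; Corollary 2.7 concerns only $p = {\rm char}(\widehat K)$. The paper's mechanism is: for each prime $\ell$, $K _{G(K)}$ is a Henselian discrete valued field with finite residue field, so $r(\ell) _{K _{G(K)}} \ge 2$ (Remark 2.8), while (1.4) applied to $(K, v _{G(K)})$ --- legitimate because the residue field $K _{G(K)}$ of that valuation has characteristic $0$ --- bounds $r(\ell) _{K} \le 2$; were $v(K)/G(K)$ not $\ell$-divisible, the ramified part would push the rank to at least $3$, a contradiction. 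A smaller instance of the same omission occurs in your exclusion of $G(K) = pG(K)$ with $\varepsilon \notin K$: Proposition 2.5 (i) only tells you that $\mathcal{G}(K(p)/K) \cong \mathcal{G}(K _{G(K)}(p)/K _{G(K)})$ is free pro-$p$ of finite rank, which condition (ii) does not forbid at rank $2$; you need quasilocality, via Lemma 3.3 (ii) and Remark 3.4 applied to $v _{G(K)}$, to obtain $r(p) _{K _{G(K)}} \le 1$ and hence the contradiction with $r(p) _{K} \ge 2$.
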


\begin{proof}
Note that condition (i) of Proposition 2.6 holds when $(K, v)$
satisfies condition (i) or (ii) of Proposition 6.4 (see (4.1),
\cite{W}, page 725, and \cite{PR}, Theorem~3.1). Therefore, one may
assume for the proof that char$(K) = 0$, $\widehat K$ is finite and
the subgroup $G(K) \le v(K)$ is cyclic. This enables one to deduce
from (1.4) and Remark 2.8 that if $K$ is quasilocal, then $v
_{G(K)}(K) = v(K)/G(K)$ is divisible. In view of \cite{PR},
Theorem~3.1, the obtained results prove that (ii)$\to $(i). Assume
now that $(K, v)$ is $p$-adically closed. This ensures that
$v(K)/G(K)$ is divisible, which implies finite extensions of $K$ in
$K _{\rm sep}$ are inertial relative to $v _{G(K)}$. Therefore,
$\mathcal{G}_{K} \cong \mathcal{G}_{K _{G(K)}}$ (see \cite{JW}, page
135), so it follows from \cite{Ch5}, I, Theorem~8.1, that $K$ is
quasilocal if and only if so is $K _{G(K)}$. As $\widehat K$ is
finite and $G(K)$ is cyclic, the assertion that $K _{G(K)}$ is
quasilocal can be deduced from (2.1) (i), Propositions 2.1 and
\cite{Ch2}, I, Corollary~2.5. Finally, the isomorphism
$\mathcal{G}_{K} \cong \mathcal{G}_{K _{G(K)}}$, combined with (2.6)
(ii) and \cite{S1}, Ch. II, Theorems~3 and 4, indicates that $\mathcal{G}(K(p)/K)$ and $r(p)_{K}$ satisfy condition (ii), so the
implication (i)$\to $(ii) and Proposition 6.4 are proved.
\end{proof}

\medskip
The concluding result of this Section supplements Theorem 1.1 by
showing that Brauer groups of quasilocal fields with Henselian
valuations whose residue fields are separably closed and imperfect
do not necessarily embed in $\mathbb Q/\mathbb Z$. In addition to
Proposition 6.3, it simultaneously indicates that central division
algebras over such fields are not necessarily defectless.

\medskip
\begin{prop}
Assume that $(\Phi , \omega )$ is a Henselian field, such that
$\omega (\Phi ) = \mathbb Z$, $\widehat \Phi $ is quasifinite and
{\rm char}$(\widehat \Phi ) = q > 0$. Then there exists a Henselian
field $(K, v)$ with the following properties:
\par
{\rm (i)} $v(K) = \mathbb Q $, {\rm char}$(\widehat K) = q$ and $[\widehat
K\colon \widehat K ^{q}] = q$;
\par
{\rm (ii)} $\mathcal{G}_{K}$ is a pro-$q$-group and $_{q}{\rm Br}(K)
\cong \widehat K ^{\ast }/\widehat K ^{\ast q}$; in particular,
$_{q}{\rm Br}(K)$ is infinite;
\par
{\rm (iii)} $K$ is quasilocal and has a unique immediate $\mathbb Z
_{q}$-extension $I _{\infty }$ in $K _{\rm sep}$; for each finite
extension $L$ of $K$ in $K _{\rm sep}$ of nontrivial defect, $L \cap
I _{\infty } \neq K$;
\par
{\rm (iv)} $K/\Phi $ is a field extension of transcendency degree
$1$.
\end{prop}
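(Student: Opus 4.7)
The plan is to construct $(K, v)$ by a controlled transfinite procedure inside a suitable separable closure, modeled on the construction of \cite{Ch2}, II, Sect.~4 used to establish Proposition 6.2. The base field is the Henselization $(F, u)$ of $\Phi(t)$ with respect to the Gauss extension of $\omega$; then $u(F) = \mathbb Z$, $\widehat F = \widehat \Phi(\bar t)$, and since $\widehat \Phi $ is perfect, $[\widehat F\colon \widehat F^{q}] = q$. Moreover $F/\Phi $ has transcendency degree $1$, securing (iv) at the preliminary stage.

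Next I would enlarge $F$ inside $F _{\rm sep}$ so as to make the value group equal to $\mathbb Q$, the absolute Galois group pro-$q$, and keep the imperfection degree of the residue field equal to $q$. First pass to the compositum $F_{1}$ of all finite extensions of $F$ in $F _{\rm sep}$ of degree coprime to $q$; this is a tame Galois extension whose absolute Galois group is pro-$q$ and whose value group is $\mathbb Z _{(q)}$. Then, by iterated wild adjunction of $q^{n}$-th roots of a fixed uniformizer of $F_{1}$ (controlled via Lemmas 2.2 and 3.2), build a tower whose union $\widetilde F$ is Henselian with value group $\mathbb Q$ and still satisfies $[\widehat{\widetilde F}\colon \widehat{\widetilde F}^{q}] = q$. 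By Lemma 4.1 applied to each finite Henselian subfield of $\widetilde F$ admitting a defectful extension, immediate cyclic extensions of degree $q$ exist at every level; these assemble into an immediate $\mathbb Z_{q}$-extension $I_{\infty }$ via the chain-of-fields argument at the end of the proof of Theorem 1.1 (ii). A Zorn-type descent then selects a subfield $K \subseteq \widetilde F$ maximal subject to $F \subseteq K$, $v(K) = \mathbb Q$, $[\widehat K\colon \widehat K^{q}] = q$, and $I_{\infty }$ immediate over $K$; this delivers (i), (iv), and the structural parts of (iii).

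The quasilocal property is verified as follows. Since $\mathcal{G}_{K}$ is pro-$q$, it suffices to show that every finite extension $M$ of $K$ in $K(q)$ is $q$-quasilocal, which by (1.1) reduces to injectivity of ${\rm Cor}_{N/M}$ on ${}_{q} {\rm Br}(N)$ for every finite $N/M$ in $M(q)$; this is inherited from the explicit tower structure of $\widetilde F$. The identification ${}_{q} {\rm Br}(K) \cong \widehat K^{\ast }/\widehat K^{\ast q}$ in (ii) comes by representing every class in ${}_{q} {\rm Br}(K)$ as a cyclic symbol algebra $(I_{1}/K, \sigma , a)$ with $a \in O_{v}(K)^{\ast }$ (possible since $K$ is $q$-quasilocal, $I_{1}/K$ is immediate of degree $q$, and the Henselian property permits the normalization $v(a) = 0$), and checking that the residue-class map $(I_{1}/K, \sigma , a) \mapsto \bar a \cdot \widehat K^{\ast q}$ is a well-defined group isomorphism; injectivity comes from Theorem 3.1 and Lemma 3.3 (i), surjectivity from Hensel lifting of units in $\widehat K$. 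Infinitude of $\widehat K^{\ast }/\widehat K^{\ast q}$ follows from the presence of the transcendental residue $\bar t$ together with the infinity of $\widehat \Phi $.

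The main obstacle is the middle step: arranging the wild adjunctions and the subsequent Zorn descent so that $K$ simultaneously has value group exactly $\mathbb Q$, residue imperfection degree exactly $q$, is quasilocal, and admits a unique immediate $\mathbb Z_{q}$-extension with the intersection property of (iii). This simultaneous control requires a careful transfinite induction distinguishing tame from wild steps, paralleling but technically more involved than the construction in \cite{Ch2}, II, Sect.~4.
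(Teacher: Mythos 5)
There is a genuine gap, and it sits exactly where you locate ``the main obstacle.'' The paper's construction does not generate the immediate $\mathbb Z_{q}$-extension after the fact by a Zorn-type descent: it builds it \emph{first}, at the purely algebraic level. By the proof of \cite{Ch2}, II, Theorem~1.2, one chooses $\widetilde K \in I(\overline{\Phi}/\Phi)$ so that $\overline{\Phi}/\widetilde K$ is already an immediate $\mathbb Z_{q}$-extension; this single choice simultaneously forces $v(\widetilde K)=\mathbb Q$ and makes the residue field of $\widetilde K$ algebraically closed. Only then does one pass to the Gauss valuation on $\widetilde K(X)$ (introducing the transcendental residue $\bar X$, hence $[\widehat K\colon \widehat K^{q}]=q$), take a Henselization $F$, and let $K$ be the fixed field of a Sylow pro-$q$-subgroup of $\mathcal{G}_{F}$; the immediate $\mathbb Z_{q}$-extension of $K$ is simply $I_{\infty}=\overline{\Phi}K$, transported from the algebraic level. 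Your plan inverts this order --- adjoin $t$ over $\Phi$ first, then try to manufacture immediacy --- and the tools you invoke to do so cannot work. Adjoining $q^{n}$-th roots of a uniformizer produces totally ramified extensions and gives no reason for any defect, hence no reason for any immediate extension, to appear. Lemma~4.1 cannot supply one: it is stated in the setting of Theorem~1.1, so it presupposes (a) that the field is already $p$-quasilocal (which you only verify at the very end, making the argument circular), (b) that $v(K)\neq pv(K)$ (whereas your target has $v(K)=\mathbb Q = qv(K)$), and (c) that a finite extension of nontrivial defect already exists, which is precisely what needs to be produced. The Zorn descent is also not coherent as stated: maximality of $K$ inside $\widetilde F$ is in tension with $I_{\infty}$ being a proper $\mathbb Z_{q}$-extension of $K$, and none of the conditions you impose (quasilocality, uniqueness of $I_{\infty}$, the intersection property in (iii)) is visibly preserved under unions of chains.

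A secondary but real problem is your verification of (ii): you cite Theorem~3.1 and Lemma~3.3~(i) for injectivity of the residue-class map on cyclic algebras, but Theorem~3.1 again lives under the hypothesis $v(K)\neq pv(K)$ of Theorem~1.1, which fails for $v(K)=\mathbb Q$. The relevant statement in the $q$-divisible regime is Lemma~3.6 (and, in the paper, the identity ${\rm Br}(\widetilde K_{1}K/K)={}_{q}{\rm Br}(K)$ obtained by the argument of \cite{Ch2}, II, Sect.~4), not Theorem~3.1. In short: the one idea that makes the proposition true --- prefabricating the immediate $\mathbb Z_{q}$-extension over an algebraic extension of $\Phi$ before the transcendental is adjoined --- is absent from your proposal, and the substitute machinery you propose is inapplicable in the $q$-divisible value group setting.
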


\begin{proof}
Let $\overline {\Phi }$ be an algebraic closure of $\Phi $. By the
proof of \cite{Ch2}, II, Theorem~1.2, there exists a field
$\widetilde K \in I(\overline {\Phi }/\Phi )$, such that $\overline
{\Phi }/\widetilde K$ is an immediate $\mathbb Z _{q}$-extension
relative to $v _{\widetilde K}$; in particular, one can apply (3.4)
(iii) to the fields from $I(\overline {\Phi }/\widetilde K)$ the
conditions of \cite{Ch2}, II, Lemma 3.1. Put $\tilde v = v
_{\widetilde K}$ and let $\tilde k$ be the residue field of
$(\widetilde K, \tilde v)$. Denote by $v _{X}$ the Gauss valuation
of the rational function field $\widetilde K(X)$, which extends
$\tilde v$ so that $v _{X}(f(X)) = 0$, for each $f(X) \in O _{\tilde
v}[X] \setminus M _{\tilde v}[X]$, and fix a Henselization $(F, v)$
of $(\widetilde K(X), v _{X})$. The definition of $v _{X}$ shows
that the residue class of $X$ is transcendental over $\tilde k$. Let
$G _{q}$ be a Sylow pro-$q$-subgroup of $\mathcal{G}_{F}$, $K$ the
fixed field of $G _{q}$ and $v = w _{K}$. Arguing in the spirit of
the proof of the quasilocal property of the field $K _{q}$
(considered in \cite{Ch2}, II, Sect. 4), one obtains first that
Br$(\widetilde K _{1}K/K) = _{q}{\rm Br}(K)$, where $\widetilde K
_{1}$ is the extension of $\widetilde K$ in $\Gamma $ of degree $q$.
This is used for proving that $K$, $v$ and $I _{\infty } = \overline
{\Phi }K$ have the properties required by Proposition 6.5.
\end{proof}

\medskip
\begin{rema}
Let $F _{0}$ be a global field, $w _{0}$ a discrete valuation of $F
_{0}$, $q = {\rm char}(\widehat F _{0})$, $F$ a completion of $F
_{0}$ with respect to $\omega _{0}$, and $w$ the valuation of $F$
continuously extending $w _{0}$. It is well-known that tr$(F/F
_{0})$ is (uncountably) infinite. Fix a purely transcendental
extension $F _{n}$ of $F _{0}$ in $F$ so that tr$(F _{n}/F _{0}) = n
\ge 0$, $n \le \infty $, denote by $\Phi _{n}$ the separable closure
of $F _{n}$ in $F$, and let $\omega _{n}$ be the valuation of $\Phi
_{n}$ induced by $w$. Then $\omega _{n}$ is discrete and Henselian
and $\widehat \Phi _{n} \cong \mathbb F _{q}$. Therefore, one can
find an extension $R _{n}$ of $\Phi _{n}$ with the properties
required by Proposition 6.5. When $n \in \mathbb N$, our
construction ensures that tr$(R _{n-1}/F _{0}) = n$.
\end{rema}

\medskip
For each $m \in \mathbb N$, the quasilocal fields $R _{n}$, $n \in
\mathbb N$, in Remark 6.6 have infinitely many nonisomorphic
algebras $D _{n,m} \in d(R _{n})$ of index $p ^{m}$. By \cite{Ch3},
Theorem~4.1 and Corollary~8.6, for each admissible pair $(n, m)$,
all $D _{n,m}$ share, up-to $R _{n}$-isomorpisms, a common set of
maximal subfields, and a common class of splitting fields algebraic
over $R _{n}$. Since $R _{n}$ is of transcendency degree $n + 1$ or
$n + 2$ over its prime subfield, for each $n < \infty $, this raises
interest in the open problem of whether there exist finitely
generated fields $F$ which possess infinitely many nonisomorphic $D
\in d(F)$ with some of the noted two properties of the algebras $D
_{m,n}$ (see \cite{KMc}, and for the case of quaternion algebras
\cite{RR}, \cite{GS} and \cite{PraRap}, Remark~5.4). The
corresponding problem for arbitrary fields has an affirmative
solution (found by Van den Bergh-Schofield \cite{VdBS}, Sect. 3, and
Saltman, see \cite{GSz}, Sect. 5.5). In view of \cite{Ch5}, I,
Corollary~8.5, a complete solution to the general problem is
obtained by applying the latter assertion of (1.3) (i), to a field
$E _{0}$ of zero characteristic and to a divisible abelian torsion
group $T$ with infinite components $T _{p}$, for all $p \in \mathbb
P$.

\medskip


\begin{thebibliography}{11}

\bibitem{A1} A.A. Albert, \emph{Modern Higher Algebra}, Univ. of
Chicago Press, Chicago, Ill., 1937.

\bibitem{A2} A.A. Albert, \emph{Structure of Algebras}, Amer. Math.
Soc. Coll. Publ., vol. XXIV, Amer. Math. Soc., New York, 1939.

\bibitem{B1} N. Bourbaki, \emph{Elements de Mathematique. Algebre
Commutative}, Chaps. V, VI, VII, Hermann, Paris, 1964.

\bibitem{Ch1} I.D. Chipchakov, \emph{Henselian valued stable
fields}, J. Algebra {\bf 206} (1998), 344-369.

\bibitem{Ch2} I.D. Chipchakov, \emph{Henselian valued  quasi-local
fields with totally indivisible value groups}, I, Comm. Algebra {\bf
27} (1999), 3093-3108; II, Preprint, arXiv:1011.2661v3 [math.RA].

\bibitem{Ch3} I.D. Chipchakov, \emph{On the Galois cohomological
dimensions of stable fields with Henselian valuations}, Comm.
Algebra {\bf 30} (2002), 1549-1574.

\bibitem{Ch4} I.D. Chipchakov, \emph{Henselian discrete valued
fields with one-dimensional local class field theory}, in: Proc. of
AGAAP-Conference (V. Brinzanescu, V. Drensky, P. Pragatz, Eds.),
Borovets, Bulgaria, 22.9-02.10. 2003, Serdica Math. J. {\bf 34}
(2004), 363-394.

\bibitem{Ch5} I.D. Chipchakov, \emph{On the residue fields of
Henselian valued stable fields}, I, J. Algebra {\bf 319} (2008),
16-49; II, C.R. Acad. Bulg. Sci. {\bf 60} (2007), 471-478.

\bibitem{Ch6} I.D. Chipchakov, \emph{Primarily quasilocal fields and
$1$-dimensional abstract local class field theory}, Preprint,
arXiv:math/0506515v7 [math.RA].

\bibitem{Ch7} I.D. Chipchakov, \emph{On the Brauer groups of
quasilocal fields and the norm groups of their finite Galois
extensions}, Preprint, arXiv:math/0707.4245v6 [math.RA].

\bibitem{Ch8} I.D. Chipchakov, \emph{Demushkin groups and inverse
Galois theory for pro-$p$-groups of finite rank and maximal
$p$-extensions}, Preprint, arXiv:1103.2114v3 [math.RA].

\bibitem{Dr1} P.K. Draxl, \emph{Skew Fields}, London Math. Soc.
Lecture Note Series, vol. 81, Cambridge etc., Cambridge Univ. Press,
IX, 1983.

\bibitem{Dr2} P.K. Draxl, \emph{Ostrowski's theorem for Henselian
valued skew fields,} J. Reine Angew. Math. {\bf 354} (1984),
213-218.

\bibitem{E1} I. Efrat, \emph{Finitely generated pro-$p$ Galois
groups of $p$-Henselian fields}, J. Pure Appl. Algebra {\bf 138}
(1999), 215-228.

\bibitem{E2} I. Efrat, \emph{A Hasse principle for function fields
over PAC-fields}, Isr. J. Math. {\bf 122} (2001), 43-60.

\bibitem{E3} I. Efrat, \emph{Valuations, Orderings, and Milnor
$K$-Theory,} Math. Surveys and Monographs 124, Amer. Math. Soc.,
Providence, RI, 2006.

\bibitem{F} L. Fuchs, \emph{Infinite Abelian Groups}, v. I, Pure and
Applied Math., 36. New York-London, Academic Press, XI, 1970.

\bibitem{GS} S. Garibaldi and D.J. Saltman, \emph{Quaternion
algebras with the same subfields}, Preprint, arXiv:0906.5337v1
[math.RA], Jun 28, 2009.

\bibitem{GSz} Ph. Gille and T. Szamuely, \emph{Central Simple
Algebras and Galois Cohomology}, Cambridge Studies in Advanced
Mathematics 101; Cambridge University Press, XI, Cambridge, 2006.

\bibitem{G} K.W. Gruenberg, \emph{Projective profinite groups}, J.
Lond. Math. Soc. {\bf 42} (1967), 155-165.

\bibitem{JW} B. Jacob and A.R. Wadsworth, \emph{Division algebras
over Henselian fields}, J. Algebra {\bf 128} (1990), 126-179.

\bibitem{J} N. Jacobson, \emph{Finite-Dimensional Division Algebras
over Fields}, Springer-Verlag, Berlin, 1996.

\bibitem{Jo} A.J. de Jong, \emph{The period-index problem in the
theory of algebraic surfaces}, Duke Math. J. {\bf 123} (2004),
71-94.

\bibitem{KMc} D. Krashen and K. McKinnie, \emph{Distinguishing
division algebras by finite splitting fields}, Preprint,
arXiv:1001.3685v1 [math.RA].

\bibitem{La} T.Y. Lam, \emph{Orderings, valuations and quadratic
forms}, Reg. Conf. Ser. Math. 52, 1983.

\bibitem{Li1} M. Lieblich, \emph{Twisted sheaves and the
period-index problem}, Compos. Math. {\bf 144} (2008), 1-31.

\bibitem{L} S. Lang, \emph{Algebra}, Reading, Mass., Addison-Wesley,
Inc., XVIII, 1965.

\bibitem{MM} P. Mammone and A. Merkur'ev, \emph{On the corestriction
of the $p ^{n}$-symbol}, Isr. J. Math. {\bf 76} (1991), 73-80.

\bibitem{MT} O.V. Mel'nikov and O.I. Tavgen, \emph{The absolute
Galois group of a Henselian field}, Dokl. Akad. Nauk BSSR {\bf 29}
(1985), 581-583 (Russian).

\bibitem{M} A.S. Merkur'ev, \emph{Brauer groups of fields}, Comm.
Algebra {\bf 11} (1983), 2611-2624.

\bibitem{P} R. Pierce, \emph{Associative Algebras}, Graduate Texts
in Math., vol. 88, Springer-Verlag, New York-Heidelberg-Berlin,
1982.

\bibitem{Po} F. Pop, \emph{Galoissche Kennzeichnung $p$-adisch
abgeschlossener K\"orper}, J. Reine Angew. Math. {\bf 392} (1988),
145-175.

\bibitem{PraRap} G. Prasad and A.S. Rapinchuk, \emph{Weakly
commensurable arithmetic groups and isospectral locally symmetric
spaces}, Publ. Math. IHES {\bf 109} (2009), 113-184.

\bibitem{PR} A. Prestel and P. Roquette, \emph{Formally $p$-adic
Fields}, Lecture Notes in Math., 1050, Berlin etc., Springer-Verlag,
1984.

\bibitem{RR} A.S. Rapinchuk and I.A. Rapinchuk, \emph{Division
algebras having the same maximal subfields}, Preprint,
arXiv:0910.3368v1 [math.RA], Oct. 18, 2009.

\bibitem{Re} M. Reiner, \emph{Maximal Orders}, London Math. Soc.
Monographs, vol. 5, London-New York-San Francisco: Academic Press, a
subsidiary of Harcourt Brace Jovanovich, Publishers, 1975.

\bibitem{S} D.J. Saltman, \emph{The Schur index and Moody's
theorem}, K-Theory {\bf 7} (1993), 309-332.

\bibitem{Sch} W. Scharlau, \emph{\"Uber die Brauer-Gruppe eines
Hensel-K\"orpers}, Abh. Math. Semin. Univ. Hamb. {\bf 33} (1969),
243-249.

\bibitem{Schil} O.F.G. Schilling, \emph{The Theory of Valuations},
Math. Surveys, IV, Amer. Math. Soc., New York, 1950.

\bibitem{S1} J.-P. Serre, \emph{Cohomologie Galoisienne}, Cours au
College de France, 1962/1963, 2nd ed., Springer-Verlag,
Berlin-G\"ottingen-Heidelberg-New York, 1964.

\bibitem{S2} J.-P. Serre,  \emph{Local Fields}, Translated from the
French by M.J. Greenberg. Graduate Texts in Math., vol. 67,
Springer-Verlag, New York-Heidelberg-Berlin, 1979.

\bibitem{Ti} J.-P. Tignol, \emph{On the corestriction of central
simple algebras}, Math. Z. {\bf 194} (1987), 267-274.

\bibitem{TY} I.L. Tomchin and V.I. Yanchevskij, \emph{On defects of
valued division algebras}, Algebra i Analiz {\bf 3} (1991), 147-164
(Russian: English transl. in St. Petersburg Math. J. {\bf 3} (1992),
631-647).

\bibitem{VdBS} M. Van den Bergh and A. Schofield, \emph{Division
algebra coproducts of index $n$}, Trans. Amer. Math. Soc. {\bf 341}
(1994), 505-517.

\bibitem{W} R. Ware, \emph{Galois groups of maximal
$p$-extensions}, Trans. Amer. Math. Soc. {\bf 333} (1992), 721-728.

\bibitem{We} E. Weiss, \emph{Cohomology of Groups}, Pure and Applied
Math., 34, Academic Press, New York-London, 1969.

\bibitem{Wh} G. Whaples, \emph{Algebraic extensions of arbitrary
fields}, Duke Math. J. {\bf 24} (1957), 201-204.

\bibitem{Ya} T. Yamazaki, \emph{Reduced norm map of division
algebras over complete discrete valuation rings of certain type},
Compos. Math. {\bf 112} (1998), 127-145.

\end{thebibliography}
\end{document}